\documentclass[11pt,bull-l]{amsart}
\usepackage[top = 0.9in, bottom = 0.9in, left =1in, right = 1in]{geometry}
\usepackage{packages}
\usepackage{macros}
\usepackage{setspace}

\usepackage{cleveref}

\usepackage{amsmath}
\usepackage{algorithm}
\usepackage{algpseudocode}
\usepackage{mathabx}

\usepackage{todonotes}

\newtheorem*{theorem*}{Theorem}
\newtheorem{theorem}{Theorem}[section]
\newtheorem{definition}[theorem]{Definition}
\newtheorem{corollary}[theorem]{Corollary}
\newtheorem{lemma}[theorem]{Lemma}
\newtheorem{assumption}{Assumption}
\newtheorem{remark}[theorem]{Remark}

\newtheorem*{customplm}{Spike Detection}

\renewcommand{\vec}{\mathbf}
\newcommand{\ri}{}\renewcommand{\ri}{\mathrm{i}}
\newcommand{\dMP}{\mathrm{dMP}}
\newcommand{\OO}{\mathrm{O}}
\newcommand{\bunderline}[1]{\mkern2mu\underline{\mkern-2mu#1\mkern-4mu}\mkern4mu }

\theoremstyle{definition}
\newtheorem{simulation}{Simulation}
\newtheorem{example}{Example}

\title{A Lanczos-Based Algorithmic Approach for Spike Detection in Large Sample Covariance Matrices}

\author{Charbel Abi Younes}
\address{University of Washington, Seattle, WA}
\email{cyounes@uw.edu}

\author{Xiucai Ding}
\address{University of California, Davis, Davis, CA}
\email{xcading@ucdavis.edu}

\author{Thomas Trogdon}
\address{University of Washington, Seattle, WA}
\email{trogdon@uw.edu}

\date{}
\usepackage{hyperref}
\hypersetup{
  colorlinks   = true, 
  urlcolor     = red, 
  linkcolor    = blue, 
  citecolor   = blue 
}
\usepackage{booktabs}
\usepackage{siunitx}

\begin{document}

\pagestyle{plain}

\maketitle

\begin{abstract}
We introduce a new approach for estimating the number of spikes in a general class of spiked covariance models without directly computing the eigenvalues of the sample covariance matrix. This approach is based on the Lanczos algorithm and the asymptotic properties of the associated Jacobi matrix and its Cholesky factorization. A key aspect of the analysis is interpreting the eigenvector spectral distribution as a perturbation of its asymptotic counterpart.  The specific exponential-type asymptotics of the Jacobi matrix enables an efficient approximation of the Stieltjes transform of the asymptotic spectral distribution via a finite continued fraction. As a consequence, we also obtain estimates for the density of the asymptotic distribution and the location of outliers. We provide consistency guarantees for our proposed estimators, proving their convergence in the high-dimensional regime. We demonstrate that, when applied to standard spiked covariance models, our approach outperforms existing methods in computational efficiency and runtime, while still maintaining robustness to exotic population covariances.

\end{abstract}

\section{Introduction}\label{sec:Overview}
Large sample covariance matrices play a fundamental role in high-dimensional data analysis. A widely studied and sophisticated model in the literature is the spiked covariance matrix model \cite{Ding2021c,Johnstone2001}, in which a finite number of spikes—eigenvalues that separate from the bulk of the spectrum—are introduced into the population covariance matrix $\Sigma$. In this model, a key task is to estimate the number of spikes using the associated sample covariance matrices.
 Such a problem has been studied in various settings; see Section \ref{sec_existingresults} for a detailed review. However, all existing works leverage the spectral properties of $W$, relying on specific statistics constructed from its eigenvalues—either the prominent outlier eigenvalues or the relatively small bulk eigenvalues. Despite the potential statistical consistency of these approaches under certain conditions, they can be computationally intensive and prone to numerical errors due to the direct calculation of eigenvalues\footnote{Modern eigensolvers are very accurate for symmetric matrices. The errors here stem from using individual eigevalues that may fluctuate wildly, while averages of eigenvalues have much smaller variance.} and associated Monte Carlo estimations.

In this paper, instead of relying on eigenvalue-based statistics, we introduce a novel algorithmic approach built upon the well-known Lanczos iteration\footnote{Due to instabilities in the iteration we use reorthogonalization \cite{Paige1980}.} \cite{Lanczos1950}, which operates directly on the sample covariance matrix $W$ using random vectors, uniformly distributed on the hypersphere. Our proposed methods are both statistically consistent and numerically cheap while still being robust. Furthermore, as a byproduct of determining the number of spikes, we also obtain estimates for the asymptotic density of the empirical spectral distribution of $W$ and the asymptotic locations of the outliers in $W$. 


Our approach is based on the observation that, under mild assumptions, the number of spikes is, with high probability, identical to the number of poles of a complex function—specifically, the Stieltjes transform of a measure that connects the empirical spectral distribution (ESD) and its asymptotic counterpart, the asymptotic spectral distribution (ASD). This stochastic measure, which we refer to as the spiked ASD (see \eqref{eq:hatmu0} and \eqref{eq:hatmu} below), preserves a scaled version of the continuous density of the ASD while accounting for the random outliers in the ESD. Consequently, a key challenge is to develop an efficient and robust estimator for the spiked ASD. Rather than directly relying on the eigenvalues of $W$, the central innovation of our method is the use of Cholesky factorization for the Jacobi matrix obtained by applying the Lanczos iteration with $W$ to several test vectors. This approach is motivated by the fact that the Stieltjes transform of the spiked ASD of $W$ can be approximately characterized by a new fixed-point equation, based on continued fraction theory, whose coefficients are derived from the entries of the Cholesky factorization of the Jacobi matrix associated with the spiked ASD (see Section \ref{sec_newnewnew}). This fixed-point equation differs from the commonly-used self-consistent equation in random matrix theory \cite{Bai2010,Knowles2017}. Crucially, this equation, along with the Lanczos algorithm, enables a numerical estimation of the Stieltjes transform of the ASD through an iterative process without directly computing the eigenvalues of $W$ (see Algorithm \ref{Ea:realVESDB} and Section \ref{sec_newnewnew}). 

On the techinical side, our proofs rely on three main ideas. The first one is the fact that the orthogonal polynomials for a generic limiting random matrix spectral distribution exhibit exponentially small error terms in their asymptotics \cite{Kuijlaars2003,Yattselev2015,Ding2021b}. These results are based on the analysis of the Fokas--Its--Kitaev Riemann--Hilbert characterization of orthogonal polynomials \cite{FokasItsKitaev}. The second is the so-called anisotropic local laws for covariance matrices \cite{Knowles2017,Pillai2014} and their extension to spiked models \cite{Ding2019}. The last technique is a recently developed perturbation theory \cite{Ding2021b} for orthogonal polynomials, which builds on the Fokas--Its--Kitaev Riemann--Hilbert theory and is particularly compatible with the local laws from random matrix theory.



\begin{figure}[tbp]
    \centering
    \begin{subfigure}[b]{0.48\textwidth}
        \centering
        \includegraphics[width=\textwidth]{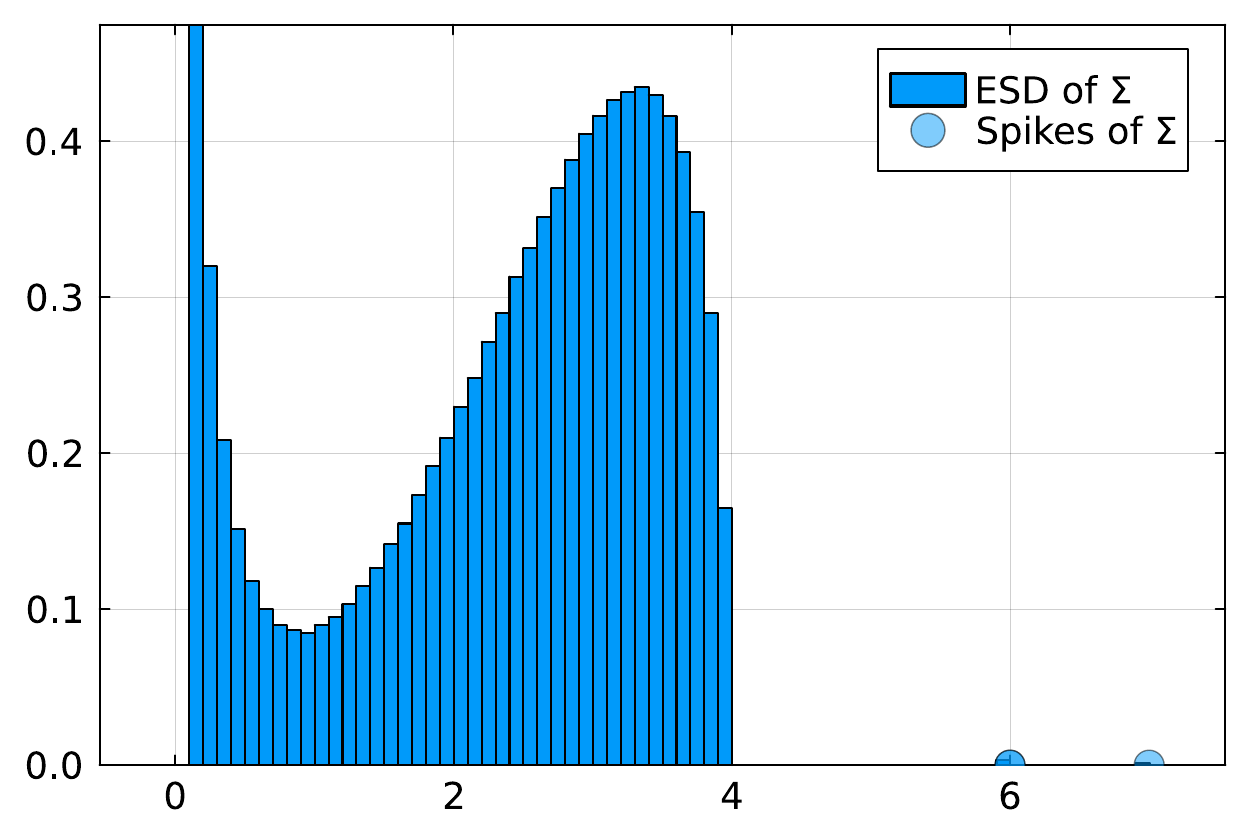}
    \end{subfigure}
    \hfill
    \begin{subfigure}[b]{0.48\textwidth}
        \centering
        \includegraphics[width=\textwidth]{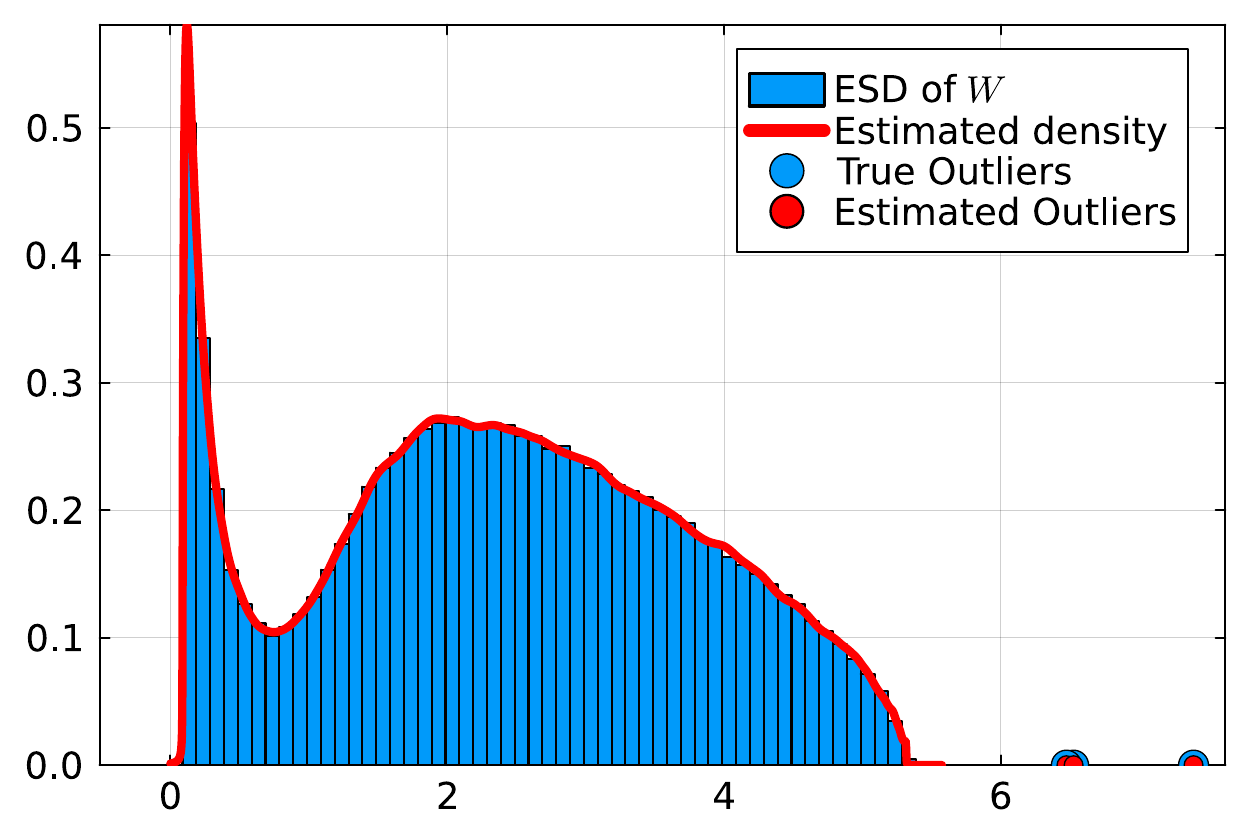}
    \end{subfigure}
    \caption{Left: The ESD of $\Sigma$, where $\Sigma$ is an $N \times N$ diagonal matrix with $N = 6000$ and entries given by the quantiles of the density $\frac{1}{K}\frac{x^4+1}{x^2} \sqrt{x - 0.1} \sqrt{4 - x}$ with $K$ being a normalizing constant. The first three diagonal entries are modified to 7, 6, and 6, forming the spikes of $\Sigma$. Right: The ESD of $W$ defined in \eqref{eq:SCM_Model}, with $c_N = 0.1$, $M = 60000$, and $X$ having iid standard normal entries. The ESD is compared against the estimated outliers, their locations, and the approximate ASD obtained using Algorithms~\ref{Ea:ESD} and~\ref{finaldetectionalgorithm} with parameters $k = 150$ and $C = 1$.}
    \label{fig:Demo}
\end{figure}


\subsection{The model and problem formulation}

Consider the sample covariance matrix
\begin{align}\label{eq:SCM_Model}
    W = Y Y^*, \quad Y = \Sigma^{1/2}X,
\end{align}
where $\Sigma$ is a positive-definite deterministic matrix, referred to as the population covariance matrix, and $X$ is an $N \times M$ random matrix with centered, independent, and identically distributed (iid) entries. In this paper, we are interested in the high-dimensional setting, where $M$ is comparably large to $N$. Specifically, we assume the existence of a small constant $0 < \tau < 1$ such that the aspect ratio $c_N:= N/M$ satisfies
\begin{align}\label{eq:AsymptRatio}
    \tau \leq c_N \leq \tau^{-1} \quad \text{for all }N.
\end{align}
Moreover, we assume that the entries of $X$, denoted by $x_{ij}$ for $1\leq i\leq N, 1\leq j\leq M$, satisfy
\begin{align}\label{eq:MomentCondition}
    \E x_{ij} = 0, ~\E |x^2_{ij}| = \frac{1}{M}.
\end{align}
For definiteness, we focus on the case where the random variables $x_{ij}$ are real. However, we note that our discussions and numerical methods can be extended to the complex case after minor adjustments if we further require that $\re x_{ij}$ and $\im x_{ij}$ are independent centered random variables with variance $(2M)^{-1}$. We also assume that the random variables $x_{ij}$ possess arbitrarily high moments, meaning that for any fixed $k\in \N$, there exists a constant $C_k$ such that
\begin{align}\label{eq:BddMoments}
   \paren{\E|x_{ij}^k|}^{1/k} \leq C_k M^{-1/2}.
\end{align}
The assumption that \eqref{eq:BddMoments} holds for all $k \in \N$ may be easily relaxed. For instance, it is easy to check that our results remain valid, after minor modifications using some truncation and comparison techniques, if we only require that \eqref{eq:BddMoments} holds for all $k\leq C$ for some finite $C$.

We adopt the spiked covariance matrix model following the framework established in \cite{Ding2021}. Let the spiked covariance matrix $\Sigma$ admit the following spectral decomposition
\begin{align}\label{eq:SigmaEigen}
    \Sigma = \sum_{i=1}^N \tilde{\sigma}_i\mathbf{v}_i\mathbf{v}_i^*, ~ \tilde{\sigma}_i = (1+d_i)\sigma_i,
\end{align}
where $\sigma_1\geq\sigma_2\geq \dots \geq \sigma_N >0$, $d_1\geq d_2\geq \dots \geq d_N$.   Further, we assume for $r > 0$, fixed,
\begin{align*}
    d_i>0,~ i\leq r; ~ d_i = 0, i>r.
\end{align*}
We assume that $\sigma_1, \ldots, \sigma_N$ are sufficiently regular and $d_r$ exceeds a certain threshold (see Assumption~\ref{as:TechA}), ensuring that the top $r$ eigenvalues of $\Sigma$, referred to as \emph{spikes in the population covariance matrix}, lead to \emph{outliers among the eigenvalues of the sample covariance matrix} $W$. 
To fix more notation, we express the spectral decomposition of $W$ as
\begin{align}\label{eq_spectralW}
    W = \sum_{i=1}^N \lambda_i\vec u_i \vec u_i^*, \quad \lambda_i = \lambda_i(W), \quad \lambda_i \geq \lambda_{i+1}.
\end{align}
The \emph{ESD} for $W$ is given by
\begin{align*}
    \mu_{\mathrm{ESD}} = \mu_W :=  \frac{1}{N} \sum_{j=1}^N \delta_{\lambda_j(W)}.
\end{align*}
With our assumptions, the ESD has a deterministic approximation, the \emph{ASD}, $\mu_{\mathrm{ASD}}$, such that $\mu_{\mathrm{ESD}} - \mu_{\mathrm{ASD}}$ tends weakly to zero.  
For a unit vector $\vec b$, the spiked ASD is given by
\begin{align}\label{eq:sASD}
    \mu_{\mathrm{sASD}} = \mu_{\mathrm{ASD}} + \sum_{j=1}^r| \vec u_j^* \vec b |^2 \delta_{\lambda_j(W)}.
\end{align}
It is for this measure that we construct an approximation.

The spiked sample covariance matrix $W$ is strongly connected to its non-spiked counterpart, given by
\begin{align}\label{eq:NonSpikedModel}
    W_0 = \Sigma_0^{1/2}XX^*\Sigma_0^{1/2},
\end{align}
where $\Sigma_0$ is represented by the spectral decomposition
\begin{align}\label{eq:Sigma0Eigen}
    \Sigma_0 = \sum_{i=1}^N \sigma_i \mathbf{v}_i \mathbf{v}_i^*.
\end{align}
We differentiate $\Sigma_0$ from $\Sigma$ because if a limit is desired for certain spectral statistics of \eqref{eq:SCM_Model}, then additional assumptions will need to be imposed on $\Sigma_0$; see Assumption \ref{as:TechA} for more details. As a historical note, we recall that if $\sigma_i\equiv 1$ for $1\leq i\leq N$, it is well known that the eigenvalues of $W_0$ obey the Marchenko-Pastur law  \cite{Marcenko1967}. For general $\Sigma_0$, they are governed by the so-called \textit{deformed Marchenko-Pastur law} \cite{Bai2010,Knowles2017}, which, in turn, determines $\mu_{\mathrm{ASD}}$.

A common challenge in statistics is the estimation of the number of spikes $r$ and the asymptotic locations of the outliers $\lambda_{1}, \dots, \lambda_r$ from a single sample of $W$ in the high-dimensional regime. Due to the large matrix dimensions, eigenvalue computations can become computationally expensive, and the matrix $Y$ is typically accessible only through matrix-vector products. Clearly, the matrix $Y$ can be fully accessed and constructed using $M$ matrix-vector products. Therefore, the objective is to provide a solution to the following problem with a minimal number of products, specifically with $n \ll M$.


\begin{customplm}\label{Prob:SpikeNbr}
    Given the sample covariance matrix $W$ defined in \eqref{eq:SCM_Model}, find a computationally efficient and robust estimator, $\widehat{r}$, for the number of spikes, $r,$ that is consistent in the high-dimensional regime as $N\to \infty$, i.e.,
    \begin{align*}
        \Prb (\widehat{r} = r) \to 1, \quad \text{as }N\to \infty,
    \end{align*}
    and can be computed using only $n \ll N$ matrix-vector products.
\end{customplm}
\newcommand{\MainProblem}{\hyperref[Prob:SpikeNbr]{Spike Detection}}

\subsection{An overview of our approach}\label{sec:overviewofourapparoach}
As previously discussed, our method relies on understanding and approximating the Stieltjes transform of the spiked ASD. Rather than approximating the ESD, or the spiked ASD, using the eigenvalues of $W$, our approach employs Lanczos iterations for $(W, \mathbf{b})$ with multiple vectors $\mathbf{b}$ sampled independently and uniformly from the hypersphere. Technically,  a central component of our algorithmic approach and its analysis is the eigenvector empirical spectral distribution (VESD) and its asymptotic properties. Given a sample covariance matrix $W$ and a vector $\vec b\in \R^N$, the VESD is defined as 
\begin{align}\label{eq:VESDdef}
    \mu_{W,\mathbf{b}} := \sum_{i=1}^N \abs{ \mathbf{u}_i^*\mathbf{b}}^2 \delta_{\lambda_{i}(W)}.
\end{align}
We also refer to this as the VESD at $\vec b$. 

The use of the VESD is motivated by several factors. On one hand, the VESD serves as an unbiased, and asymptotically consistent, estimator of the ESD when $\mathbf{b}$ is uniformly distributed on the hypersphere. This allows for an efficient approximation of the ESD by averaging the VESD over a sufficiently large number of sampled vectors $\mathbf{b}$ --- or just one if $N$ is sufficiently large. On the other hand, the Stieltjes transform of the VESD approximately satisfies a novel fixed-point equation and can be computed iteratively using the Lanczos-produced Jacobi matrix\footnote{A Jacobi matrix is a finite or semi-infinite symmetric tridiagonal matrix with positive off-diagonal entries.} and its Cholesky decomposition (see Algorithm \ref{Ea:realVESDB} and Section \ref{sec_newnewnew}).

Leveraging the VESD alongside local laws from random matrix theory provides insight into the Jacobi matrix corresponding to its deterministic counterpart. First, the eigenvector asymptotic spectral distribution (VASD) for $\mu_{W,\vec b}$, denoted by $\mu_{\vec b} = \mu_{\vec b}(\Sigma)$, is described by the anisotropic local laws (c.f. Section \ref{subsec:LocLaw}) and is closely related to the ASD of $W$. Specifically, both measures share the same support for their continuous densities, and the density $\varrho_{\mathbf{b}}$ of $\mu_{\vec b}$ concentrates around that of the ASD when the vector $\mathbf{b}$ is chosen uniformly on the hypersphere. Second, for a certain class of population covariance matrices $\Sigma$, as specified in Assumption \ref{as:TechA}, the limiting distribution ${\mu}_{\mathbf{b}}$ has one bulk component and takes the form given by \eqref{eq:MeasureForm}, with a finite number of delta masses outside the support of its density. For important, but technical reasons, we are led to consider a modified, spiked version of ${\mu}_{\mathbf{b}}$ (see \eqref{eq:hatmu})
\begin{align}\label{eq:hatmu0}
    \widehat {\mu}_{\mathbf{b}} = {\mu}_{\mathbf{b}} - \mu_{\mathrm{Disc}},
\end{align}
which we refer to as the spiked VASD.  We then show that
\begin{align*}
    \widehat {\mu}_{\mathbf{b}} \approx \mu_{\rm sASD} \text{ when }  \vec b \text{ is uniform on } \mathbb S^{N-1},
\end{align*}
where $\mathbb S^{N-1}$ denotes the sphere in $\mathbb R^N$, see Theorem \ref{thm:VESDHaar}.

The entries of the semi-infinite Jacobi matrix $\mathcal{J}(\widehat \mu_{\vec b})$ associated with $\widehat \mu_{\vec b}$ (see \eqref{eq:JacobOPDef}) and its Cholesky factor $\mathcal{L}(\widehat \mu_{\vec b})$ (c.f. \eqref{eq:CholOpDef}), will converge exponentially to constants that depend only on the edges $\gamma_\pm$ of the support of $\varrho_\mathbf{b}$, as shown by Theorem~\ref{thm:JacobCholAsympt}.  This, combined with the perturbation analysis of \cite{Ding2021b}, shows that the $n\times n$ principal submatrices of $\mathcal{L}(\widehat \mu_{\mathbf{b}})$ and $\mathcal{L}(\mu_{W,\mathbf{b}})$ are relatively close for $n\ll N^{1/6}$, as detailed in Corollary \ref{cor:checkmuPert}. As a result, $\mu_{W,\vec b}$ exhibits simple asymptotic properties that allow for efficient recovery of information about the ASD and the outliers of $W$ through the spiked ASD.

On the computational and algorithmic side, the Lanczos iteration (Algorithm~\ref{a:lanczos}) serves as a fundamental tool for recovering the Jacobi and Cholesky entries associated with $\mu_{W,\vec{b}}$. Through iterative matrix-vector multiplications between $W$ and a vector $\vec b$, the algorithm generates a $n \times n$ symmetric tridiagonal matrix $J_n$ in the $n$th step, which corresponds exactly to the $n \times n$ principal submatrix of $\mathcal{J}(\mu_{W,\vec b})$.

Given the strong asymptotic relationship between $\mu_{W,\vec b}$ and $\widehat \mu_{\vec b}$, the Lanczos algorithm is applied until a steady state is reached, at which point the matrix $J_n$ is extended by constants to be semi-infinite (we refer the reader to $\mathcal{J}_0 = \mathcal{L}_0\mathcal{L}_0^*$ in the proof of Theorem~\ref{thm:ConvRVESDEstim}). From this semi-infinite Jacobi matrix $\mathcal{J}_0$, we define the spectral measure $\widehat \mu_0$ as \cite{DeiftOrthogonalPolynomials},
\begin{align}\label{eq:mu0def}
    \widehat{m}_0(z) = \vec e^*_1\paren{\mathcal{J}_0-z}^{-1}\vec e_1, \quad \vec e_1 = [1,0,\dots]^*, \quad z\in \C \backslash \R,
\end{align}
where 
\begin{align}\label{eq:stieltjes}
    \widehat{m}_0(z) = \int_{\R} \frac{\widehat\mu_0(\dd x)}{x-z},
\end{align} 
is the Stieltjes transform\footnote{As a notational detail, for a measure ${\mu}_y$ we use ${m}_y$ to denote its Stieljtes transform and ${\varrho}_y$ to denote its density, if it exists.} of $\widehat \mu_0$. Given the structure of $\mathcal{J}_0$ and its Cholesky factor $\mathcal{L}_0$, the support endpoints (denoted as $\widehat{\gamma}_{\pm}$) can be exactly computed from the asymptotic Cholesky entries. Moreover, 
the Stieltjes transform $\widehat m_0(z)$ is recovered iteratively through a finite continued fraction whose coefficients are derived from these entries of $\mathcal{L}_0$. For the actual implementation, the details are provided in Algorithm \ref{Ea:realVESD}.  Theoretically, we  show that $\widehat{m}_0(z)$ provides an accurate estimate of the Stieltjes transform of $\widehat \mu_{\mathbf{b}}$; see Theorem \ref{thm:ConvRVESDEstim} for further details.  Schematically, our methodology is
\begin{align}\label{alg}
   (W, \vec b \sim \mathrm{Unif}(\mathbb S^{N-1})) \overset{\substack{\text{Lanczos with} \\n~\text{steps}}}{\longrightarrow} J_n \overset{\text{extend}}{\longrightarrow} \mathcal J_0 = \mathcal L_0 \mathcal L_0^* \overset{\substack{\text{continued} \\ \text{fractions}}}{\longrightarrow} \widehat m_0 \longrightarrow \widehat \mu_0 \approx \widehat \mu_{\vec b} \approx \mu_{\mathrm{sASD}}.
\end{align}

Finally, as mentioned earlier, the number of spikes can be estimated by counting the poles of the Stieltjes transform of the spiked ASD that lie to the right of support of its density.  We show that to count these poles, with high probability it suffices to count the poles of $\widehat m_0(z)$ that lie to the right of its density.  Additionally, to reduce variance, one can sample a sequence of iid vectors from $\mathbb{S}^{N-1}$ and apply an averaging procedure. We emphasize that in a practical implementation, directly averaging the asymptotic VESDs--or their approximations--is suboptimal, as their estimates may have varying supports. Instead, we first average the Cholesky entries sufficiently far down the matrix to reduce variance, then construct an estimator for the Stieltjes transform $\widehat{m}_0(z)$ (Algorithm \ref{Ea:ESD}).



Our main results, Theorems~\ref{thm:SolveP1} and \ref{thm:SolveP23}, are summarized as follows:
\begin{theorem*}[Informal]
Let $\widehat{m}_0(z)$ be the estimate after running our algorithm \eqref{alg} for $n = \mathrm{O}(\log N)$ iterations. Moreover, let $\widehat \gamma_{\pm}$ be estimates of the support endpoints  and $\widehat \gamma_j$ ($j = 1, 2, \dots, \widehat r$) be the poles of $\widehat m_0(z)$ for $z > \widehat \gamma_{+} + N^{-1/\delta}$ with $0<\delta<1/2$, then the true number of spikes $r$ is equal to $\widehat r$, 
    with overwhelming probability.
\end{theorem*}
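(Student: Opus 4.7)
The plan is to reduce the identity $\widehat r = r$ to a pole-matching between $\widehat m_0(z)$ and the Stieltjes transform $m_{\mathrm{sASD}}(z)$ of the spiked ASD, with the threshold $\widehat\gamma_+ + N^{-1/\delta}$ serving as a clean separator between the (very small) edge-estimation error and the macroscopic gap between outliers and the bulk. First, I would chain the approximation results of the paper into a single uniform estimate. Theorem~\ref{thm:ConvRVESDEstim} yields $|\widehat m_0(z) - \widehat m_{\vec b}(z)| \le e^{-cn}$ on any contour separated from $[\gamma_-,\gamma_+]$, leveraging the exponential asymptotics of the Jacobi and Cholesky entries in Theorem~\ref{thm:JacobCholAsympt} and the perturbation bound in Corollary~\ref{cor:checkmuPert}. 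Theorem~\ref{thm:VESDHaar}, combined with the anisotropic local laws of Section~\ref{subsec:LocLaw}, gives $\widehat m_{\vec b}(z) \approx m_{\mathrm{sASD}}(z)$ with polynomial error when $\vec b$ is Haar on $\mathbb S^{N-1}$. Setting $n = C\log N$ for a sufficiently large constant $C$ and concatenating, one obtains, with overwhelming probability, $|\widehat m_0(z) - m_{\mathrm{sASD}}(z)| \le \varepsilon_N$ uniformly on any contour at positive distance from the edges and outliers.

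Next, I would exploit the pole structures of the two functions. By Assumption~\ref{as:TechA}, each outlier satisfies $\lambda_j - \gamma_+ \ge c_0 > 0$ with overwhelming probability, so $m_{\mathrm{sASD}}$ has exactly $r$ simple poles in $(\gamma_+, \infty)$, all lying in $[\gamma_+ + c_0, \infty)$. The measure $\widehat \mu_0$ associated to $\mathcal J_0$ has an absolutely continuous part on $[\widehat \gamma_-, \widehat \gamma_+]$ (with Chebyshev-type edge behaviour arising from the constant-tail extension of the Jacobi matrix) together with finitely many discrete masses outside, so the poles of $\widehat m_0$ in $\{z > \widehat \gamma_+\}$ form a finite set. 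A Rouché-type argument on small circles of radius $c_0/4$ about each $\lambda_j$, using that $|m_{\mathrm{sASD}}(z)| \gtrsim c_0^{-1}$ on the circle dominates $\varepsilon_N$, produces exactly one pole of $\widehat m_0$ near each $\lambda_j$, accounting for $r$ poles above $\widehat\gamma_+ + c_0/2$.

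Third, I would rule out spurious poles of $\widehat m_0$ in the band $(\widehat \gamma_+ + N^{-1/\delta}, \widehat \gamma_+ + c_0/2)$. The exponential convergence of the Cholesky entries to their limits, together with the explicit formulas expressing $\widehat \gamma_\pm$ in those limits, gives $|\widehat \gamma_+ - \gamma_+| \ll N^{-1/\delta}$ when $n = C\log N$ with $C$ sufficiently large. A Rouché-type argument on the boundary $\{|z - \widehat \gamma_+| = N^{-1/\delta}\}$, using the near-edge form of the anisotropic local law to control $m_{\mathrm{sASD}}(z)$, forces any such pole of $\widehat m_0$ to correspond to a pole of $m_{\mathrm{sASD}}$ at the same scale, contradicting $\lambda_j - \gamma_+ \ge c_0$. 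Combined with the matching in the previous step, this gives $\widehat r = r$.

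The main obstacle will be making Rouché quantitative on the small circle of radius $N^{-1/\delta}$ about $\widehat \gamma_+$, where both the Lanczos approximation and the anisotropic local law deteriorate. The exponential decay in Theorem~\ref{thm:JacobCholAsympt} still controls the Lanczos error for $n = C \log N$ with $C$ large, so this term is negligible; the local-law error, by contrast, is only polynomially small. The constraint $\delta < 1/2$ is precisely what ensures that the near-edge improvement in the local law (of essentially $N^{-1/2}$ on the appropriate regularity scale) beats the buffer $N^{-1/\delta}$ with sufficient margin to run the contour argument and conclude.
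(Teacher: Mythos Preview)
Your scheme has the right architecture but invokes the wrong estimate at the decisive step. Theorem~\ref{thm:ConvRVESDEstim} does not give $|\widehat m_0(z)-\widecheck m_{\vec b}(z)|\le e^{-cn}$ uniformly on contours separated from $[\gamma_-,\gamma_+]$; it gives $\OO_{\prec}(N^{-1/2}/\mathrm{Im}^2 z)$, which blows up as the contour meets the real axis. Since the poles you wish to count are real, any circle around an outlier $\lambda_j$ or around $\widehat\gamma_+$ necessarily crosses the real line, and the scalar Stieltjes-transform bound is vacuous there. Note also that the residues of $\widecheck m_{\vec b}$ at the outliers are $|\vec u_j^*\vec b|^2=\OO_{\prec}(N^{-1})$ for $\vec b$ uniform on the sphere, so on a macroscopic circle the pole contribution does not dominate the smooth part; a Rouch\'e comparison of $\widehat m_0$ against $\widecheck m_{\vec b}$ would not isolate the poles even if a uniform error bound were available.

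The paper avoids both issues by working at the operator level rather than with the $(1,1)$ resolvent entry. From Corollaries~\ref{cor:checkmuAsympt} and \ref{cor:checkmuPert} one has $\|\mathcal J(\widecheck\mu_{\vec b})-\mathcal L_0\mathcal L_0^*\|\prec N^{-1/2}$ directly. On the set $\Omega=\{z:\|(\mathcal J(\widecheck\mu_{\vec b})-z)^{-1}\|\le N^{\delta}\}$, which contains all \emph{real} $z$ at distance $\ge N^{-\delta}$ from the spectrum, the second resolvent identity gives $\|(\mathcal L_0\mathcal L_0^*-z)^{-1}-(\mathcal J(\widecheck\mu_{\vec b})-z)^{-1}\|\prec N^{-1/2+2\delta}$ uniformly in $z\in\Omega$. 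One then integrates the full resolvents around circles of radius $\sim N^{-\delta}$ about the asymptotic outlier locations and compares ranks of the resulting spectral projectors; this replaces your Rouch\'e step and works regardless of how small the residues are. A separate argument (the off-diagonals of $\mathcal L_0\mathcal L_0^*$ are nonzero with overwhelming probability, so its discrete eigenvalues are simple) identifies those eigenvalues with the poles of $\widehat m_0$. The constraint $\delta<1/2$ then arises cleanly from requiring $N^{-1/2+2\delta}\to 0$, not from any near-edge sharpening of the local law.
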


To our knowledge, our spike estimator is the first to use the asymptotics of the VESD, treating it as a perturbation of the asymptotic measure, to estimate both the support of the asymptotic density and its Stieltjes transform. Unlike existing methods, it bypasses the need to compute the entire spectrum of the data matrix $W$, significantly enhancing computational efficiency. Additionally, by the Stieltjes inversion formula, the density $\widehat \varrho_{0}$ of $\widehat \mu_0$, if it exists, can be recovered at all points of continuity using
\begin{align}\label{eq:StieltjesInv}
    \widehat \varrho_{0}(\lambda) = \lim_{\epsilon\downarrow 0^+} \frac{\mathrm{Im}\,\widehat m_{0}(\lambda+\ri\epsilon)}{\pi}.
\end{align}
This provides an approximation for the ASD of $W$; see Figure \ref{fig:Demo} for an illustration.

\subsection{Review and discussion of other approaches on addressing spike detection.}
\label{sec_existingresults}  In this section, we summarize existing results on estimating the number of spikes using $W$. To the best of our knowledge, all existing literature relies on computing the entirety of the eigenvalues\footnote{The reader may note that some techniques below only require the top eigenvalues and therefore methods like subspace iteration and block Lanczos are potentially applicable.  Yet, these methods have not, to our knowledge, been used in these settings, likely due, in part, to the fact that one would need an a priori upper bound for the number of spikes.} of $W$, which can be computationally expensive for large matrices. Depending on how the eigenvalues are utilized, the literature can be broadly classified into the following two categories. 

\begin{itemize}
    \item \textbf{Detection based on the first few outlier eigenvalues.} It is well known that if spikes exist, the corresponding outlier eigenvalues may undergo the so-called BBP transition \cite{Baik2005,Ding2021c}. In particular, if the spikes (i.e., $\widetilde{\sigma}_i$ in (\ref{eq:SigmaEigen})) exceed a certain threshold, the corresponding eigenvalues $\lambda_i$ in (\ref{eq_spectralW}) will separate from the support of the ASD. This suggests that under appropriate assumptions on the spikes (see Remark \ref{rmk_detectable} below), one can estimate the number of spikes by counting the number of outlier eigenvalues departing from the ASD. Several estimators have been developed based on this idea, depending on how the outliers are counted. For instance, \cite{Bao2015Universality,Ding2019, Ding2022TracyWidom,onatski2009testing} used the ratio of differences between consecutive eigenvalues, while \cite{10.3150/19-BEJ1129,passemier2012determining,passemier2014estimation,Vinogradova2013StatisticalInference} employed eigen-ratios or eigen-differences. Additionally, \cite{kritchman2009nonparametric} applied a nonparametric approach to identify the separation gap between outliers and bulk eigenvalues, and \cite{dingboostrap} utilized a bootstrap procedure. A common challenge in these methods is the need to carefully select a tuning threshold to distinguish larger outlier eigenvalues from the smaller, more rigid bulk eigenvalues. This threshold is often determined through Monte Carlo simulations and sometimes requires the knowledge of $\Sigma_0$ in (\ref{eq:Sigma0Eigen}), which can be computationally expensive and lack robustness.  
    \item \textbf{Procedures based on all or nearly all the eigenvalues.} In this category of approaches, instead of directly counting the larger outlier eigenvalues, which requires a precise threshold, methods typically rely on utilizing nearly all eigenvalues. For example, \cite{Bai2018ConsistencyOA, Nadakuditi2008Sample} apply information criteria in model selection (such as AIC and BIC), making use of all eigenvalues; \cite{PA} employs a parallel analysis approach that also utilizes all eigenvalues; and \cite{Ke2023} reconstructs the limiting spectrum based on eigenvalue rigidity, using nearly all eigenvalues. Additionally, there is a research direction focused on estimating the number of spikes via hypothesis testing, which depends on all eigenvalues in various forms of linear spectral statistics (LSS); see, for example, \cite{Bianchi2011Performance,Dobriban2017SharpDI,10551866,johnstone2018testing,onatski2014signal}. Many of these methods require prior knowledge of the entry distribution or the structure of $\Sigma_0.$ Moreover, since these approaches rely on computing nearly all eigenvalues, their computational complexity increases significantly as the sample size and dimension grows.
\end{itemize}

As discussed and compared in \cite{Ke2023}, the second category of methods, which utilize all or nearly all eigenvalues, is generally the most numerically robust and relies on weaker eigen-gap assumptions compared to the first category, which only uses a few outlier eigenvalues. However, from a computational perspective, methods in the first category—those relying on the largest few eigenvalues—requires fewer calculations, typically incurring a cost of order $\mathrm{O}(b c N^2 \log N)$. Here, $b$ is a potentially diverging constant that depends on the eigen-gap of the extreme eigenvalues, while $c$ is usually divergent and represents the complexity of selecting the threshold in Monte Carlo simulations. In contrast, methods in the second category, which use all or nearly all eigenvalues, require at least a cost of order $\mathrm{O}(N^3)$ to compute the spectrum of $W$.

Our proposed method not only guarantees statistical efficiency and robustness under weaker separation assumptions but is also computationally cheap, typically requiring a cost of order $\mathrm{O}(N^2 \log N)$, as it uses the well-known Lanczos algorithm and avoids the need to compute eigenvalues and certain critical thresholds. In this way, we combine the relative efficiency of existing first-category methods, which rely only on computing the top eigenvalues, with the robustness gained from utilizing the entire spectrum of $W$, as seen in the second-category methods.







\subsection{Organization of the paper and some conventions.} The structure of the paper is as follows. In Section~\ref{sec_2}, we review the well-known Lanczos algorithm and introduce our proposed algorithms. Section~\ref{sec:JacChol} discusses the details of orthogonal polynomials associated with a given measure and their connection to the corresponding Jacobi matrices and their Cholesky factorizations. Additionally, we provide the fixed-point equation for characterizing the Stieltjes transform via the Cholesky factorization. In Section~\ref{sec:Model&LocLaw}, we formalize our model and present the relevant random matrix theory needed to analyze the VESD, which is central to our theoretical developments. Sections~\ref{sec:VESDLan} and \ref{sec:Estimation} focus on the main analysis of our proposed algorithms. Finally, in Section~\ref{sec:Comp}, we demonstrate the method and present further numerical investigations. The code used to generate the plots in this paper is available at \cite{AbiYounes2025SpikeDetection}.

{Throughout the paper, we will use the following conventions.  The $j$th standard basis vector is denoted by $\vec e_j$, with the dimension inferred from the context. For two non-negative sequences $(A_N)_{N \geq 1}$ and $(B_N)_{N \geq 1}$ depending on $N$, we use the notation $A_N \asymp B_N$ to mean $C^{-1}A_N\leq B_N \leq C A_N$ for some positive constant $C$. The symbol $\sim$ will be used to denote equality in law.  We also use the notation $A_N \gg B_N$ if there exists $\epsilon > 0$ such that $A_N \geq B_N N^{\epsilon}$ for sufficiently large $N$.}

For clarity, we summarize the abbreviations used for different spectral measures.
The empirical spectral distribution (ESD) and its asymptotic counterpart (ASD) are connected through the spiked asymptotic spectral distribution (spiked ASD or sASD). Similarly, the eigenvector empirical spectral distribution (VESD) and its asymptotic equivalent (VASD) are related via the spiked eigenvector spectral distribution (spiked VASD or sVASD).

\section{The Lanczos iterations and our proposed algorithms}\label{sec_2}
In this section, we present our new algorithms. Section \ref{sec:Lanczos&Jacob} introduces a high-level view of the well-known Lanczos algorithm, which serves as the foundation for our proposed algorithms for spike detection, detailed in Sections \ref{sec:pilotestimation} and \ref{sec:finalalgorithm}.  As we discuss the algorithms, including important subroutines, we point to the theoretical results in the forthcoming sections where properties of the algorithms are established.

\subsection{The Lanczos algorithm}\label{sec:Lanczos&Jacob}
The Lanczos algorithm (c.f., \cite{Lanczos1950,TrefethenBau}) is an iterative method used to obtain a tridiagonal approximation of a symmetric or Hermitian matrix through matrix-vector multiplications and inner products.  In its simplest form, it is given by Algorithm~\ref{a:lanczos} in Appendix~\ref{sec:lanczos}.  In our computations we use Lanczos with reorthogonalization \cite{Paige1980}.

Given the matrix $W$ in (\ref{eq:SCM_Model}) and a vector $\mathbf{b}$, the Lanczos iteration at step $n\leq N$ produces a Jacobi matrix $J_n$ and orthogonal vectors $\vec q_1,\dots,\vec q_{n+1}$ such that
\begin{align*}
    W Q_n = Q_n J_n + b_{n-1}\vec q_{n+1}\mathbf{e}_n^*,
\end{align*}
where $Q_n = [\vec q_1,\vec q_2,\dots,\vec q_n],$ $\vec q_1 = \mathbf{b}/\|\mathbf{b}\|$ and
\begin{align}\label{eq:JacMatrix}
    J_n = J_n(W,\mathbf{b}) = \begin{bmatrix}
	a_0 &b_0\\
	b_0 &a_1 & \ddots\\
	& \ddots &\ddots &b_{n-2}\\
	& &b_{n-2} &a_{n-1}
\end{bmatrix},\quad a_j \in \R, \quad b_j>0.
\end{align}
The columns of $Q_n$ form an orthonormal basis for the Krylov subspace $\text{span} \{\vec q_1, W \vec q_1, \dots, W^{n-1} \vec q_1\}$.


Every $n\times n$ Jacobi matrix $J_n$ defined in (\ref{eq:JacMatrix}) produces a probability measure
\begin{align*}
    \mu_{J_n} = \sum_{j=1}^n \omega_j \delta_{\lambda_j},
\end{align*}
where $(\lambda_j)_{j=1}^n$ are the eigenvalues of $J_n$ and $\omega_j$ is the squared modulus of the first component of the normalized eigenvector associated to $\lambda_j$. The spectral measure $\mu_J$, with $n=N$ and $J = J_N(W,\mathbf{b})$, coincides with the VESD associated with $W$ and $\mathbf{b}$ whenever $\mathbf{b}$ is a unit vector. There is a bijection between such measures and Jacobi matrices \cite{DeiftEigenvalue}, making the Lanczos algorithm an effective computational method for iteratively determining the Jacobi matrix entries associated with $\mu_{W,\mathbf{b}}$ without requiring the full spectrum of $W$.

Since $W$, with high-probability, has positive eigenvalues, one can further compute the Cholesky factorization of $J_n$ which is given by $J_n = L_nL_n^*$, where $L_n$ is a lower-bidiagonal matrix with positive entries as in Algorithm \ref{a:Chol} in Appendix \ref{sec:cholesky}. To fix notation, we set 
\begin{align}\label{eq:CholOpDef11}
    L_n: = \begin{bmatrix}
	\alpha_0\\
	\beta_0 &\alpha_1\\
	&\ddots &\ddots\\
	&& \beta_{n-2} &\alpha_{n-1}
\end{bmatrix}.
\end{align}


\subsection{Pilot estimation: algorithmic approach for the spiked VASD Stieltjes transform estimation} \label{sec:pilotestimation}
As discussed in Section \ref{sec:overviewofourapparoach}, our estimator for $r$ relies on the approximation of Stieltjes transforms of spiked VASDs at random directions, which can be computed iteratively using the Cholesky algorithm. In practice, the procedure can be divided into two subroutines.
Algorithm \ref{Ea:realVESDA} first runs the Lanczos iteration on the pair $(W,\vec b)$ and the computes the Cholesky factorization of the associated $n \times n$ Jacobi matrix. The outputted Cholesky factors are subsequently used as input into Algorithm \ref{Ea:realVESDB} to construct Stieltjes transforms of associated extended Jacobi matrices. See Lemma~\ref{l:finitepert} below for a precise discussion of the output of Algorithm~\ref{Ea:realVESDB}.

\newcounter{subrout-algorithm}
\renewcommand{\thealgorithm}{SR.\arabic{subrout-algorithm}}
\setcounter{subrout-algorithm}{0}
\begin{algorithm}[tbp]
    \refstepcounter{subrout-algorithm}
    \caption{Spiked VASD Estimation Subroutine 1} \label{Ea:realVESDA}
    \begin{algorithmic}[1]
        \Statex \textbf{Input:} A positive definite matrix $W$, a vector $\mathbf{b}$, and a convergence criteria.
        \Statex \textbf{Output:} A lower-bidiagonal semi-infinite matrix $\widehat {{L}}$.
        \State Run the Lanczos iteration (Algorithm~\ref{a:lanczos}) using the pair $(W,\mathbf{b})$ with $1 \ll n \ll N^{1/6}$ until the convergence criteria is satisfied. 
        \State Compute the Cholesky factors $\{\widehat{\alpha}_i\}_{i=0}^{n-1}$, $\{\widehat{\beta}_i\}_{i=0}^{n-2}$ using Algorithm~\ref{a:Chol}.
        \State Construct the semi-infinite matrix:
        \begin{align}\label{eq:L00}
            \mathcal{L} =
            \begin{bmatrix}
                \widehat{\alpha}_0 & & & & \\
                \widehat{\beta}_0 & \ddots & & &  \\
                & \ddots & \widehat{\alpha}_{n-3} & &   \\
                & & \widehat{\beta}_{n-3} & \widehat{\alpha}_{n-2} &  \\
                & & & \widehat{\beta}_{n-2} & \widehat{\alpha}_{n-2} \\
                & & & & \widehat{\beta}_{n-2} & \ddots \\
                & & & & & \ddots & \ddots 
            \end{bmatrix}
        \end{align}
        \State \Return $\mathcal {{L}}$.
    \end{algorithmic}
\end{algorithm}

The convergence criteria for Algorithm~\ref{Ea:realVESDA} should guarantee that the Jacobi entries are eventually close to constant. We propose three simple approaches:
\begin{enumerate}  
    \item For $C> 0$ set $n = \lceil C \log N \rceil$.
    \item Monitor the standard deviations of the last $q$ diagonal and off-diagonal entries and stop once they fall below a tolerance $\delta$.  
    \item Track two sequences of length $q$ for the diagonal and off-diagonal entries, separated by a fixed gap The process stops when the difference between their averages is within a tolerance $\delta_1$ and the standard deviations are below $\delta_2$.  
\end{enumerate}
In our experiments, we typically use the third approach. Theorem \ref{thm:ConvRVESDEstim} shows that $\mathrm{O}(\log N)$ Lanczos iterations are sufficient for accurate approximations and give the sufficiency of the first approach, if $C$ is known. As such the maximum number of iteration in Algorithm~\ref{Ea:realVESDA} can be set to $\lceil C \log N \rceil$ for $C$ large. In this case, the sequences $\{\widehat{\alpha}_j\}$ and $\{\widehat{\beta}_j\}$ fluctuate around their deterministic limits with deviations of order $N^{-1/2}$ (see Corollary \ref{cor:checkmuPert}), and a natural choice for the convergence tolerances is $\delta_1 = \delta_2 = C/\sqrt{N}$. In our numerical examples, setting $q = \lfloor \frac{1}{2} \log N \rfloor$, $\delta = 3/\sqrt{N}$, and $C = \lceil \max({6\log N + 24, \sqrt{N})} \rceil$ yields reliable results. It is important to note that if a stricter tolerance $\delta$ is initially set, it can be relaxed later without the need to restart the process or incur additional computational costs, as the Cholesky factors have already been computed. We leave an adaptive procedure that incorporates all of these strategies for future work.

\begin{algorithm}
    \refstepcounter{subrout-algorithm}
    \caption{Spiked VASD Estimation Subroutine 2} \label{Ea:realVESDB}
    \begin{algorithmic}[1]
        \Statex \textbf{Input:} A lower-bidiagonal semi-infinite matrix \eqref{eq:L00} that is constant after column $n-2$.
        \Statex \textbf{Output:} $\widehat \gamma_{\pm}$ and $\widehat m_{0}(z)$ 
        \State {Estimate the support endpoints} by computing
        \[
            \widehat{\gamma}_{-} = (\widehat{\alpha}_{n-2}-\widehat{\beta}_{n-2})^2, \quad \text{and} \quad \widehat{\gamma}_{+} = (\widehat{\alpha}_{n-2}+\widehat{\beta}_{n-2})^2.
        \]
        \State {Initialize the Stieltjes transform:}
        \[
            \widehat{m}_{n-2}(z) = \dfrac{\widehat{\alpha}_{n-2}^2-z-\widehat{\beta}_{n-2}^2+\sqrt{z-\widehat{\gamma}_{+}}\sqrt{z-\widehat{\gamma}_{-}}}{2z\widehat{\beta}_{n-2}^2}.
        \]
        \For{$i=n-3,n-4,\dots,0$} 
            \[
                \widehat{m}_i(z) = \dfrac{1}{\widehat{\alpha}^2_i-z-\widehat{\alpha}_i^2\widehat{\beta}_i^2 \paren{\frac{\widehat{m}_{i+1}(z)}{1+\widehat{\beta}_i^2 \widehat{m}_{i+1}(z)}}}.
            \]
        \EndFor
        \State \Return $\widehat \gamma_\pm$, $\widehat m_0(z)$.
        
    \end{algorithmic}
\end{algorithm}

We combine Algorithms~\ref{Ea:realVESDA} and \ref{Ea:realVESDB} to construct Algorithm~\ref{Ea:realVESD} which is applied to sample covariance matrices.  Theorem~\ref{thm:ConvRVESDEstim} below, in particular, establishes that, under certain general hypotheses on the sample covariance matrix $W$, the estimators $\widehat{\gamma}_{\pm}$, from Algorithm \ref{Ea:realVESD} are asymptotically consistent estimators for the edges of the support of the density for the spiked VASD. It further establishes that $\widehat m_0(z)$ from Algorithm~\ref{Ea:realVESD} is an asymptotically consistent estimator for the Stieltjes transform of the spiked VASD which is critical to spike detection (see $\widecheck\mu_{\vec b}$ in (\ref{eq:checkmu}) below).
\newcounter{pilot-algorithm}
\renewcommand{\thealgorithm}{P.\arabic{pilot-algorithm}}
\setcounter{pilot-algorithm}{0}
\begin{algorithm}
    \refstepcounter{pilot-algorithm}
    \caption{Spiked VASD Estimation} \label{Ea:realVESD}
    \begin{algorithmic}[1]
        \Statex \textbf{Input:} A sample covariance matrix $W$, a unit vector $\mathbf{b}$, and a convergence criteria.
        \Statex \textbf{Output:} Estimators $\widehat \gamma_{\pm}$ and $\widehat m_{0}(z)$ 
        \State Run Algorithm~\ref{Ea:realVESDA} with the stated input.
        \State Run Algorithm~\ref{Ea:realVESDB} on the output of Step 1.
        \State \Return $\widehat m_0(z)$.
    \end{algorithmic}
\end{algorithm}

\subsection{Averaging procedure and proposed method for spike detection}\label{sec:finalalgorithm} 

As outlined in Theorem~\ref{thm:SolveP1}, when $\mathbf{b}$ is uniformly sampled from the unit hypersphere, the spiked VASD serves as an unbiased, and asymptotically consistent, estimator of the ASD of $W$. However, a single sample can exhibit high variance. To mitigate this, one can draw iid vectors $\{\mathbf{b}_i\}_{i=1}^{k}$ and apply an averaging procedure to effectively reduce statistical instability. We first average the Cholesky entries sufficiently far down the matrix, and then construct an estimator for the Stieltjes transform using the previously employed method, after modifying the semi-infinite Cholesky factors to enforce a support constraint. The averaging process is detailed in Algorithm \ref{Ea:avg}.



\renewcommand{\thealgorithm}{SR.\arabic{subrout-algorithm}}
\setcounter{subrout-algorithm}{2}
\begin{algorithm}[H]
\refstepcounter{subrout-algorithm}
\caption{Cholesky Averaging Subroutine}
\label{Ea:avg}
\begin{algorithmic}[1]
    \Statex \textbf{Input:} Lower-bidiagonal semi-infinite matrices $\mathcal L(j)$, $j = 1, 2, \ldots, k$, that are constant after column $n(j) - 2$, and an integer $q \ll n(j)$.
    \Statex \textbf{Output:} Averaged lower-bidiagonal matrices $\mathcal L(j)$, $j = 1, 2, \ldots, k$.
    
    \State Initialize: $\widehat{\alpha} = 0$, $\widehat{\beta} = 0$
    
    \For{$j = 1$ to $k$}
        \For{$\ell = n(j) - q - 1$ to $n(j) - 2$}
            \State $\widehat{\alpha} \gets \widehat{\alpha} + \widehat{\alpha}_\ell(j)$,~$\widehat{\beta} \gets \widehat{\beta} + \widehat{\beta}_\ell(j)$
        \EndFor
    \EndFor
    
    \State $\widehat{\alpha} \gets \frac{1}{kq} \widehat{\alpha}$,~$\widehat{\beta} \gets \frac{1}{kq} \widehat{\beta}$
    
    \For{$j = 1$ to $k$}
        \For{$\ell = n(j) - q - 1$ to $n(j) - 2$}
            \State Replace $\widehat{\alpha}_\ell(j)$ and $\widehat{\beta}_\ell(j)$ with $\widehat{\alpha}$ and $\widehat{\beta}$, respectively.
        \EndFor
    \EndFor
    
    \State \Return $\mathcal L(j)$ for each $j = 1, 2, \ldots, k$
\end{algorithmic}
\end{algorithm}


\renewcommand{\thealgorithm}{P.\arabic{pilot-algorithm}}
\setcounter{pilot-algorithm}{1}
\begin{algorithm}[H]
    \refstepcounter{pilot-algorithm}
    \caption{ASD Estimation} \label{Ea:ESD}
    \begin{algorithmic}[1]
        \Statex \textbf{Input:} Sample covariance matrix $W$ and an integer $k$.
        \Statex \textbf{Output:} An estimator for $\gamma_{\pm}$ and $m_{\mathrm{ASD}}(z)$.
        
        \State Sample $k$ independent vectors $\brac{\vec{b}_j}_{j=1}^k$ that are uniform on the hypersphere.
        \For{$j = 1$ to $k$}
            \State Run Estimation Subroutine~\ref{Ea:realVESDA} to compute $\mathcal{L}(\vec{b}_j)$.
        \EndFor
        \State Run Algorithm \ref{Ea:avg} on $\mathcal L(\vec{b}_j)$ for $j = 1,\dots,k$, obtaining $\widehat {\mathcal L}(\vec{b}_j)$ for $j = 1,\dots,k$.
        \For{$j = 1$ to $k$}
            \State Run Algorithm \ref{Ea:realVESDB} on $\widehat{{\mathcal L}}(\vec b_j)$ to compute\textsuperscript{\ref{foot:gammapm}} $\widehat{\gamma}_{\pm}$ and ${\widehat{m}}^j_0(z)$.
        \EndFor
        \State Compute
        \[
            \widehat{m}_0(z)= \frac{1}{k} \sum_{j=1}^k {\widehat{m}^j}_0(z),  
        \]
        \State \Return $\widehat{\gamma}_{\pm}$, $\widehat{m}_0(z)$ and $(\widehat m^j_0(z))_{j=1}^k$.
    \end{algorithmic}
\end{algorithm}
\footnotetext[1]{\label{foot:gammapm}Note that $\widehat \gamma_\pm$ will not depend on $j$.}

Using Algorithm \ref{Ea:avg}, one can draw sequences of $\mathbf{b}_i, 1 \leq i \leq k$ and obtain a robust estimator for the ASD of $W$ in Algorithm \ref{Ea:ESD}. The consistency of this estimator put forth in Algorithm~\ref{Ea:ESD} is guaranteed by Theorem~\ref{thm:SolveP1}.

With Algorithm~\ref{Ea:ESD} in hand, we are able to develop our final spike detection algorithm.  The algorithm works by first running Algorithm~\ref{Ea:ESD} and counting the poles in the computed Stieltjes transforms that lie sufficiently far to the right of the estimate support $[\widehat \gamma_-, \widehat \gamma_+]$.  We refer the reader to Appendix~\ref{a:poles} for an in-depth discussion of two methods (c.f. Algorithms~\ref{PE:ConnCoef} and \ref{PE:Trunc}) to compute these poles. 

\renewcommand{\thealgorithm}{P.\arabic{pilot-algorithm}}
\setcounter{pilot-algorithm}{2}
\begin{algorithm}[H]
    \refstepcounter{pilot-algorithm}
    \caption{Spike detection procedure}\label{finaldetectionalgorithm}
    \begin{algorithmic}[1]
    \Statex \textbf{Input:} Sample covariance matrix $W$, an integer $k$ and a threshold parameters $C$, $0<\delta<1/2$.
    \Statex \textbf{Output:} An estimator $\widehat r$ for the number of spikes in the population covariance.
        \State Run Algorithm~\ref{Ea:ESD}, obtaining $\widehat{\gamma}_{\pm}$, $\widehat{m}_0(z)$ and $(\widehat m_0^j(z))_{j=1}^k$.
        \For{$j = 1$ to $k$}
            \State Compute $\widehat r_j$, the number of poles of $\widehat m_0^j(z)$ that are larger than $\widehat \gamma_+ + C N^{-\delta}$.
        \EndFor
    \State \Return $\widehat r$ which could be the mode or the rounded average of $\{\widehat r_j\}_{j=1}^k$.
    \end{algorithmic}
\end{algorithm}



\section{Jacobi matrices, Cholesky factorizations and random matrices}\label{sec:JacChol}
\subsection{Orthogonal polynomials, Jacobi matrices and Cholesky factorizations} Assume that $\mu$ is Borel measure on $\mathbb R$ with finite mass and compact support. The orthonormal polynomials $(p_n)_{n\geq 0}$, $p_n(\lambda)=p_n(\lambda;\mu)$ for $\mu$ are constructed by applying the Gram-Schmidt process to the sequence $(\lambda\mapsto 1,\lambda\mapsto \lambda, \lambda\mapsto \lambda^2, \ldots)$ with the $L^2(\mu)$ inner product \cite{Szego1939},
\begin{align}
    \int_{\R} p_i(\lambda)p_j(\lambda)\mu(\mathrm{d}\lambda) = \delta_{ij},
\end{align}
where $\delta_{ij}$ is the Kronecker delta.  We also impose that the leading coefficient of $p_n(\lambda)$ is positive.  If the support of $\mu$ contains at least $N$ points, then $p_n(\lambda)$ exists for $0\leq n\leq N-1$.  The orthonormal polynomials satisfy a symmetric three-term recurrence 
\begin{align}\label{eq:ScalarOrthonRec}
    \lambda p_n(\lambda) = b_n p_{n+1}(\lambda ) + a_n p_n(\lambda) + b_{n-1} p_{n-1}(\lambda), \quad n\geq0, \quad b_n>0,
\end{align}
where 
\begin{align}
    a_n = \int_\R \lambda p_n^2(\lambda)\mu(\mathrm{d}\lambda) \quad \text{and} \quad b_n = \int_\R \lambda p_n(\lambda)p_{n+1}(\lambda)\mu(\mathrm{d}\lambda),
\end{align}
with the convention $p_{-1} \equiv 0$ and $b_{-1} = 1$. Here $a_n = a_n(\mu)$, $b_n = b_n(\mu)$ are called the recurrence coefficients. With these coefficients, one can define a corresponding semi-infinite, symmetric, tridiagonal matrix,
\begin{align}\label{eq:JacobOPDef}
    \mathcal{J}(\mu) = \begin{bmatrix}
	a_0 &b_0\\
	b_0 &a_1 &b_1\\
	&b_1 &a_2 &\ddots\\
	&&\ddots &\ddots
\end{bmatrix},
\end{align}
commonly referred to as a Jacobi matrix (or operator) with spectral measure $\mu$. 

One key result is that the Jacobi matrix is connected with the Stieltjes transform of $\mu$ in the sense that $\mathcal{J}(\mu)$ is the unique Jacobi matrix that satisfies \cite{DeiftOrthogonalPolynomials},
\begin{align}\label{eq:JacResId}
    \vec e_1^*(\mathcal{J}(\mu)-z)^{-1}\vec e_1 = \nu\int_{\R} \frac{\mu(\mathrm{d}\lambda)}{\lambda-z}, \quad \text{for }\mathrm{Im}~z>0, \quad \nu^{-1} = \int_{\R}{\mu(\mathrm{d}\lambda)}.
\end{align}
Moreover, suppose that $\mathrm{supp}(\mu)\subset (0,\infty)$, then $\mathcal{J}(\mu)$ is invertible and it has a Cholesky decomposition $\mathcal{J}(\mu)=\mathcal{L}(\mu)\mathcal{L}(\mu)^*$, where the Cholesky factor $\mathcal{L}(\mu)$ takes the form
\begin{align}\label{eq:CholOpDef}
    \mathcal{L}(\mu) = \begin{bmatrix}
	\alpha_0\\
	\beta_0 &\alpha_1\\
	&\beta_1 &\alpha_2\\
	&&\ddots &\ddots
\end{bmatrix}, ~ \alpha_n = \alpha_n(\mu)>0, ~ \beta_n = \beta_n(\mu)>0.
\end{align}

The asymptotic properties of $\mathcal{J}(\mu)$ and its Cholesky factorization $\mathcal{L}(\mu)$ are  heavily tied to the regularity conditions outlined in Assumption \ref{as:measurecond} below, see also \cite{DeiftOrthogonalPolynomials,KuijlaarsInterval,Ding2021b}. More specifically, if $\mu$ is supported on a single interval with square root behavior at the endpoints, and possibly a few outliers, the corresponding Jacobi operator and its Cholesky factor exhibit simple, exponential asymptotic behavior that can be characterized by the edges of the support.

\begin{assumption}\label{as:measurecond}
    Consider a measure $\mu = \mu(N)$ that satisfies the following assumptions with absolute constants $D\geq 1$ and $\tau,\sigma>0$:
    \begin{enumerate}
        \item The measure $\mu$ is of the form
        \begin{align*}
            \mu(\mathrm{d}\lambda) = h(\lambda) \mathds{1}_{[a,b]}(\lambda)(b-\lambda)^{\alpha}(\lambda-a)^{\beta}\mathrm{d}\lambda + \sum_{j=1}^p w_j \delta_{c_j}(\mathrm{d}\lambda),
        \end{align*}
        where $\alpha=\pm \frac{1}{2}$, $\beta = \pm \frac{1}{2}$, $b > a \geq \tau $, $w_j>0$ and $c_j>b$ for all $1\leq j\leq p$.

        \item We allow $\mu$ to depend implicitly on a parameter $N$ but require that $p$ be non-negative and constant (for sufficiently large $N$).
        Additionally, we assume that $\min_{i \neq j} |c_i-c_j| \geq C_\gamma e^{-\gamma N}$ for all $\gamma > 0$, and that
        \begin{align*}
            \min\{|a-b|,|a-c_j|,|b-c_j|\}\geq \tau \quad \text{for all }j=1,2,\dots,p.
        \end{align*}

        \item We associate a bounded open set $\Omega$ (independent of $N$) containing $[a,b]$ for all $N$ such that $h$ has an analytic continuation to $\Omega$.

        \item We suppose that $h$ is bounded uniformly from above and below on $\Omega$, i.e.
        \begin{align*}
            \sup_{z\in \Omega} \max \brac{\abs{h(z)},\abs{h(z)}^{-1}}\leq D.
        \end{align*}

        \item For every $j$, we assume that either $N^{-\sigma}/D\leq \abs{w_j}\leq D$, $0\leq \sigma <\infty$ or $w_j = 0$. 
    \end{enumerate}
\end{assumption}

These assumptions impose precise square-root behavior of the density of $\mu$ at the endpoints of the support of $\mu$, something that is absolutely critical to obtaining the exponential asymptotics stated in the next result.  Note that while we allow $\mu$ to depend on $N$, the conditions imposed are sufficient to obtain uniform error terms in the asymptotics.

\begin{theorem}\label{thm:JacobCholAsympt}
    Let $\mu$ be a measure satisfying Assumption \ref{as:measurecond}, then there exists $\kappa>0$ that depends only on $D,\sigma, \Omega, \tau$, such that
    \begin{align*}
        a_n(\mu) = \frac{b+a}{2} + \OO(e^{-\kappa n}), \quad b_n(\mu)  = \frac{b-a}{4} + \OO(e^{-\kappa n}),
    \end{align*}
    and
    \begin{align*}
        \alpha_n(\mu)  = \frac{\sqrt{a}+\sqrt{b}}{2} +\OO(e^{-\kappa n}), \quad \beta_n(\mu)  = \frac{\sqrt b- \sqrt a}{2} + \OO(e^{-\kappa n}). 
    \end{align*}
\end{theorem}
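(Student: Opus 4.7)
The plan is to handle the two pairs of asymptotics in sequence: first establish the exponential asymptotics for the Jacobi recurrence coefficients $a_n(\mu),b_n(\mu)$ via the Riemann--Hilbert machinery already developed in \cite{Kuijlaars2003,Yattselev2015,Ding2021b}, and then promote these to the Cholesky entries $\alpha_n(\mu),\beta_n(\mu)$ by a stability argument on the Cholesky recursion $\alpha_n^2=a_n-\beta_{n-1}^2$, $\beta_n=b_n/\alpha_n$. Splitting the argument this way avoids redoing a Riemann--Hilbert analysis on a two-cut / symmetrized measure, which would otherwise be the alternative route.

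For the first step, note that Assumption~\ref{as:measurecond} is precisely the hypothesis class under which the Fokas--Its--Kitaev $2\times 2$ RH problem for the orthonormal polynomials of $\mu$ admits the Deift--Zhou steepest descent analysis with Airy parametrices at the square-root endpoints $a,b$, giving $a_n(\mu_{\mathrm{ac}})=(a+b)/2+\OO(e^{-\kappa n})$ and $b_n(\mu_{\mathrm{ac}})=(b-a)/4+\OO(e^{-\kappa n})$ for the absolutely continuous part (this is \cite{Kuijlaars2003}). The finite sum of point masses $\sum_j w_j\delta_{c_j}$ lying at a positive distance from $[a,b]$ is incorporated by a finite-rank, Blaschke-type dressing of the RH problem, as developed in \cite{Yattselev2015} and, in the form closest to what is needed here (uniform in $N$, compatible with a shrinking residue at each $c_j$), in \cite{Ding2021b}. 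The net effect of each mass is to contribute exponentially decaying corrections of the same form, so the two asymptotics for $a_n(\mu)$ and $b_n(\mu)$ follow with a common rate $\kappa$ depending only on $D,\sigma,\Omega,\tau$.

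For the second step, I treat the Cholesky recurrence as a nonlinear dynamical system driven by $(a_n,b_n)$. The fixed point $(\alpha^*,\beta^*)=\bigl((\sqrt a+\sqrt b)/2,(\sqrt b-\sqrt a)/2\bigr)$ satisfies $(\alpha^*)^2+(\beta^*)^2=(a+b)/2$ and $\alpha^*\beta^*=(b-a)/4$, so it is consistent with the limits of $(a_n,b_n)$. Writing $\beta_n=\beta^*+\eta_n$ and linearizing, one finds
\begin{align*}
\eta_n=-\bigl(\beta^*/\alpha^*\bigr)^2\eta_{n-1}+\OO(|a_n-a^*|+|b_n-b^*|)+\OO(\eta_{n-1}^2),
\end{align*}
and the contraction constant $(\beta^*/\alpha^*)^2=\bigl((\sqrt b-\sqrt a)/(\sqrt b+\sqrt a)\bigr)^2<1$ is strictly less than one thanks to $a\geq\tau>0$. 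A standard Grönwall-type induction then gives $|\eta_n|=\OO(e^{-\kappa' n})$ with $\kappa'>0$ depending only on $a,b$ and the rate $\kappa$ from Step~1; the same conclusion for $\alpha_n$ follows from $\alpha_n^2=a_n-\beta_{n-1}^2$. To make the induction rigorous one needs a base case; this is provided by taking $n$ large enough that the combined perturbation is below a threshold determined by the basin of attraction of the fixed point, which is again a function only of $\tau$ and the limiting values.

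The main obstacle is uniformity. Both the RH analysis and the stability estimate must produce a rate $\kappa$ that depends only on $D,\sigma,\Omega,\tau$ and not on $N$ or on the particular configuration of the $c_j$'s. On the RH side this requires controlling the local and global parametrices uniformly as the $c_j$ approach each other or approach the spectral edge $b$ at the rates allowed by Assumption~\ref{as:measurecond}(2) and (5); the Blaschke-type insertion of the point masses must be shown to preserve the exponential bounds even when $w_j$ is polynomially small. On the Cholesky side, the contraction constant is bounded away from $1$ because $a\geq\tau$, so the stability argument is robust, but one must still verify that $\alpha_n$ stays uniformly bounded away from zero along the entire trajectory (not just near the fixed point) so that the division in $\beta_n=b_n/\alpha_n$ is safe. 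This is done by propagating the a priori bound $\alpha_n\geq c(\tau)>0$ from the positivity of $\mathcal J(\mu)$ established via its spectral measure.
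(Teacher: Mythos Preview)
For the Jacobi coefficients $a_n(\mu),b_n(\mu)$ your plan and the paper's coincide: both invoke the Riemann--Hilbert results of \cite{Kuijlaars2003,KuijlaarsInterval} together with the point-mass extension in \cite{Ding2021b}, the only additional remark being uniformity when the $c_j$'s are allowed to coalesce at the sub-exponential rate in Assumption~\ref{as:measurecond}(2).

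For the Cholesky entries you take a genuinely different route. The paper simply cites \cite[Theorem~5.4]{Ding2021} for this half, whereas you treat $u_n:=\alpha_n^2=a_n-b_{n-1}^2/u_{n-1}$ as a perturbed scalar dynamical system with a contracting linearization at $(\alpha^*,\beta^*)$. This is more elementary and would be a nice simplification, but there is a real gap: the linearization with multiplier $(\beta^*/\alpha^*)^2<1$ (the sign in your display should be $+$, not $-$, though this is harmless) is only a \emph{local} statement, and you have not shown the orbit ever enters the basin of attraction. Your proposed fix, the lower bound $\alpha_n\geq c(\tau)>0$ ``from the positivity of $\mathcal J(\mu)$'', does not accomplish this: positivity gives only $\alpha_n>0$, and while one can extract $\alpha_{n-1}^2>b_{n-1}^2/a_n$ from $\beta_{n-1}^2<a_n$, neither bound separates the orbit from the \emph{unstable} fixed point $(\alpha,\beta)=(\beta^*,\alpha^*)$, which lies in the same a priori box. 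One way to close the gap is to note that the autonomous limit of the $u_n$-recursion is a M\"obius map with attracting fixed point $u^*_+=(\alpha^*)^2$ and global basin $(0,\infty)\setminus\{u^*_-\}$; a non-autonomous perturbation argument in the cross-ratio coordinate $(u-u^*_+)/(u-u^*_-)$ then yields global exponential convergence, but this still requires a uniform-in-$N$ bound on how close $u_n$ can approach $u^*_-$ during the transient, which must be traced back to Assumption~\ref{as:measurecond}. Without some argument of this kind, Step~2 as written is incomplete.
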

\begin{proof}   
    The result for Jacobi operators in the case $a=-1,b=1$ without discrete contributions was established in \cite{Kuijlaars2003, KuijlaarsInterval}, and the approach was later extended in \cite{Ding2021b}. The only piece that requires an extra argument is that \cite{Ding2021b} required $\{c_j\}_{j=1}^p$ to be well separated.  Yet, the estimates can be seen to hold even if points in this set are allowed to contract at a sufficiently slow exponential rate, see Assumption~\ref{as:measurecond}(2). The corresponding argument for the Cholesky factor is provided in \cite[Theorem 5.4]{Ding2021}.
\end{proof}

Theorem~\ref{thm:JacobCholAsympt} has profound implications. It implies that after analyzing only logarithmically many recurrence coefficients, the support of the density of $\mu$ can be found within any polynomially small error.  This will allow us to run Lanczos on a random matrix for logarithmically many steps to estimate the spiked VASD.


\subsection{Characterization of Stieltjes transforms using the Cholesky decomposition}\label{sec_newnewnew}

In this section we discuss both Jacobi matrices for which the Stieltjes transform can be computed exactly, and those for which we can approximate it reliably. We begin with some general observations regarding Cholesky factorizations of Jacobi matrices.  Recall that the definition of the Stieltjes transform \eqref{eq:stieltjes} for the measure $\mu$ and the properties of its associated Jacobi operators \eqref{eq:JacResId}, we have
\begin{align*}
    m_{\mu}(z) = \vec e_1^*\paren{\mathcal{J}(\mu)-z}^{-1}\vec e_1, \quad \text{for } \mathrm{Im}~z>0.
\end{align*}
The tridiagonal structure of $\mathcal{J}({\mu}) = \mathcal{L}({\mu})\mathcal{L}^*({\mu})$, enables a more detailed analysis of the resolvent. 
Observe that 
\begin{align}\label{eq:CholConst}
    \mathcal{J}(\mu) = \mathcal{L}(\mu)\mathcal{L}(\mu)^* = \begin{bmatrix}
        \alpha_0^2 & \alpha_0\beta_0 & & \\
        \alpha_0\beta_0 & \alpha_1^2 + \beta_0^2 & \alpha_1\beta_1 \\
        & \alpha_1\beta_1 & \ddots & \ddots \\
        & & \ddots & \ddots 
      \end{bmatrix}, \quad \alpha_i = \alpha_i(\mu), \quad \beta_i = \beta_i(\mu).
\end{align}
We aim to express the resolvent of $\mathcal{J}(\mu)$ in terms of a subblock that maintains a similar structure. To achieve this, we first use the Schur complement (see Lemma \ref{lem:SchurCompl}), leading to 
\begin{align}
    m_{\mu}(z) = \dfrac{1}{\alpha_0^2-z-\alpha^2_0\beta^2_0 \vec e_1^*\paren{\mathcal{J}^{(1)}(\mu)-z}^{-1}\vec e_1},
\end{align}
where $\mathcal{J}^{(1)}(\mu)$ is obtained from $\mathcal{J}(\mu)$ by removing the first row and column. Note that $\mathcal{J}(\mu)$ and $\mathcal{J}^{(1)}(\mu)$ do not have the same structure, as $\mathcal{J}^{(1)}(\mu)$ has an additional term in its first entry. To address this, we define $\mathcal{J}_1(\mu) = \mathcal{J}^{(1)}(\mu) - \beta_0^2 \vec e_1\vec e_1^*$. Using the Woodbury matrix identity (c.f. Lemma \ref{lem:Woodbury}), we find that
\begin{align*}
    \vec e_1^*\paren{\mathcal{J}^{(1)}(\mu)-z}^{-1}\vec e_1 = \frac{m_1(z)}{1+\beta_0^2 m_1(z)}, \quad \text{where} \quad m_1(z) := \vec e_1^*(\mathcal{J}_1(\mu)-z)^{-1}\vec e_1.
\end{align*}
This process can be repeated indefinitely, resulting in the following continued fraction representation for the $(1,1)$-entry of the resolvent
\begin{align}\label{eq:ContFracLimMeas}
    m_{\mu}(z) = \dfrac{1}{\alpha_0^2-z-\alpha_0^2\beta_0^2\paren{\frac{m_1(z)}{1+\beta_0^2 m_1(z)}}}, \quad m_{i}(z) = \dfrac{1}{\alpha_i^2-z-\alpha_i^2\beta_i^2\paren{\frac{m_{i+1}(z)}{1+\beta_i^2 m_{i+1}(z)}}}, \quad \text{for }i=1,2,\dots.
\end{align}

At first glance, this expansion may not appear to provide any new insights. But in the special case where $\alpha_i = \alpha, \beta_i = \beta$ for $i \geq n -2$, we have that $m_{n-1}(z) = m_i(z)$ for $i \geq n-1$, and therefore we have
\begin{align*}
    m_{n-1}(z) = m(z) = \dfrac{1}{\alpha^2-z-\alpha^2\beta^2\paren{\frac{m(z)}{1+\beta^2 m(z)}}} \quad \text{with } \mathrm{Im}~m(z)>0.
\end{align*}
This yields 
\begin{align}\label{eq:explicit_solve}
    m_{n-1}(z) = \dfrac{\alpha^2-z-\beta^2+\sqrt{z-(\alpha+\beta)^2}\sqrt{z-(\alpha-\beta)^2}}{2z\beta^2}.
\end{align}
Note that \eqref{eq:explicit_solve} can also be applied to recover the support of $\mu$,
\begin{align}\label{eq:SuppFormulae}
    \gamma_{-} = (\alpha-\beta)^2, \quad \text{and} \quad \gamma_{+} = (\alpha+\beta)^2.
\end{align}
We have established the following in this setting.


\begin{lemma}\label{l:finitepert}
Consider the extended Cholesky factor
    \begin{align}\label{eq:L00_ext}
            \mathcal{L} =
            \begin{bmatrix}
                \widehat{\alpha}_0 & & & & \\
                \widehat{\beta}_0 & \ddots & & &  \\
                & \ddots & \widehat{\alpha}_{n-3} & &   \\
                & & \widehat{\beta}_{n-3} & \widehat{\alpha}_{n-2} &  \\
                & & & \widehat{\beta}_{n-2} & \widehat{\alpha} \\
                & & & & \widehat{\beta} & \widehat{\alpha} \\
                & & & & & \widehat{\beta} & \ddots \\
                & & & & & & \ddots 
            \end{bmatrix}
        \end{align}
        and the associated Jacobi matrix $\mathcal J = \mathcal L \mathcal L^*$.  Then $\widehat m_0(z)$, the output Algorithm~\ref{Ea:realVESDB} applied to the upper $n\times n$ principal subblock of $\mathcal L$, satisfies
        \begin{align*}
            \widehat m_0(z) = \vec e_1^* (\mathcal J - z)^{-1} \vec e_1.
        \end{align*}
        Furthermore, $\widehat m_0(z)$ is the Stieltjes transform of a measure that has its density supported on $[\widehat \gamma_-,\widehat \gamma_+]$ where
        \begin{align*}
            \widehat \gamma_{-} = (\widehat \alpha-\widehat \beta)^2, \quad \text{and} \quad \widehat \gamma_{+} = (\widehat \alpha+\widehat \beta)^2.
        \end{align*}
\end{lemma}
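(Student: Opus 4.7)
The plan is to recognize that Algorithm~\ref{Ea:realVESDB} is engineered to implement exactly the continued-fraction representation \eqref{eq:ContFracLimMeas}, specialized to the case in which the Cholesky entries become constant beyond a finite prefix. The verification splits cleanly into two independent pieces: (a) the resolvent identity $\widehat m_0(z) = \vec e_1^*(\mathcal J - z)^{-1}\vec e_1$, and (b) the identification of the support endpoints $\widehat \gamma_\pm$.

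For (a), I would rerun the Schur-complement plus Woodbury-identity derivation from Section~\ref{sec_newnewnew} applied directly to $\mathcal J = \mathcal L \mathcal L^*$. Letting $m_i(z) := \vec e_1^* (\mathcal J_i - z)^{-1} \vec e_1$, where $\mathcal J_i$ is the tridiagonal tail operator obtained after $i$ Schur-peelings (so $\mathcal J_0 = \mathcal J$), one obtains exactly the recursion in \eqref{eq:ContFracLimMeas} for $i=0,1,\dots,n-2$. Because the Cholesky entries are constant for indices $i \geq n-1$, the recursion becomes $i$-independent there and $m_i(z)\equiv m(z)$ satisfies a scalar fixed-point equation whose Herglotz (upper-half-plane-preserving) solution is the closed form \eqref{eq:explicit_solve}. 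This is precisely the initialization value used in Algorithm~\ref{Ea:realVESDB}. The backward loop then unfolds the recursion verbatim, so a short induction on decreasing $i$ yields $\widehat m_i(z) = m_i(z)$ for all $0 \leq i \leq n-2$, and in particular $\widehat m_0(z) = m_0(z) = \vec e_1^*(\mathcal J - z)^{-1}\vec e_1$.

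For (b), I would observe that the fully constant-tail Jacobi operator (diagonal $\widehat \alpha^2 + \widehat\beta^2$, off-diagonal $\widehat\alpha\widehat\beta$) is a scaled and shifted free Jacobi operator whose spectral measure at $\vec e_1$ is absolutely continuous on $[(\widehat\alpha-\widehat\beta)^2,(\widehat\alpha+\widehat\beta)^2]$; this can be read off from \eqref{eq:explicit_solve} together with the Stieltjes inversion formula \eqref{eq:StieltjesInv}. The operator $\mathcal J$ differs from this constant-tail operator only in finitely many entries, i.e.\ by a finite-rank self-adjoint perturbation. By Weyl's theorem on essential spectrum, the absolutely continuous spectrum is preserved, so the density of the spectral measure whose Stieltjes transform is $\widehat m_0(z)$ is supported on $[\widehat \gamma_-, \widehat \gamma_+]$; the perturbation can only create finitely many additional isolated eigenvalues outside the interval, which manifest as poles of $\widehat m_0$ and are exactly the objects the paper later exploits for spike counting.

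The main technical care I expect to need is index bookkeeping at the transition row between the varying prefix and the constant tail, and verifying the correct branch of the square root in \eqref{eq:explicit_solve} so that $m(z)$ is genuinely a Stieltjes transform (equivalently, $m$ maps the upper half-plane into itself and $z m(z) \to -1$ as $|z|\to\infty$ along the imaginary axis). Both issues are routine but deserve attention. Apart from them, the proof reduces to applying \eqref{eq:ContFracLimMeas} together with the finite-rank stability of the essential spectrum.
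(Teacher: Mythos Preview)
Your proposal is correct and, for part (a), follows the paper exactly: the lemma is stated in the paper as a summary of the Schur-complement/Woodbury derivation \eqref{eq:ContFracLimMeas}--\eqref{eq:explicit_solve} immediately preceding it, which is precisely what you plan to rerun.

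The only small divergence is in part (b). You invoke Weyl's theorem on the essential spectrum under finite-rank perturbation, whereas the paper simply reads the support off the explicit formula \eqref{eq:explicit_solve} via \eqref{eq:SuppFormulae}: the square-root branch points of $\widehat m_{n-2}$ sit at $(\widehat\alpha\pm\widehat\beta)^2$, and the backward recursion in Algorithm~\ref{Ea:realVESDB} is a rational operation in $\widehat m_{i+1}$, so $\widehat m_0$ inherits the same branch cut. Your route is slightly more conceptual and makes transparent why at most finitely many isolated poles can appear outside $[\widehat\gamma_-,\widehat\gamma_+]$, which is exactly the structure exploited later for spike counting; the paper's route is more elementary and avoids any operator-theoretic machinery. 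Your flagged cautions about index bookkeeping at the transition row and about selecting the Herglotz branch of the square root are well placed and are indeed the only places requiring care.
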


\begin{remark}
    For Cholesky factors that are not exactly constant outside of a finite-size block, resolvent estimates are applicable to estimate the difference between $\vec e_1^* (\mathcal L \mathcal L^* - z)^{-1} \vec e_1$ and its approximation found replacing $\mathcal L$ by an appproximation of the form \eqref{eq:L00_ext}.  This estimation is performed in Theorem~\ref{thm:SolveP23} below.
\end{remark}

\subsection{Random matrices and fixed point equations for the VASD}\label{sec_newnewnewnew1} 

The results from the previous calculation allow one to exactly compute the measure associated to a Jacobi matrix coming from a Cholesky factorization that is eventually constant.  The class of sample covariance matrices that we consider have two important features that interact well with this fact:
\begin{enumerate}
    \item The Cholesky factorization associated to $\mathcal J(\mu)$, where $\mu$ is the (spiked) VASD, is approximately constant if a small number of rows and columns are removed (again, see Theorem~\ref{thm:JacobCholAsympt}).
    \item The Cholesky entries computed via the Cholesky factorization of the Lanczos output concentrate.
\end{enumerate}
This gives a new fixed point equation to solve to approximate the (spiked) VASD.  This is captured in Theorem~\ref{thm:ConvRVESDEstim}, but we demonstrate it here with two examples.

\begin{example}
    Consider $\Sigma = I$.  We apply both stages of Algorithm~\ref{Ea:realVESD} explicitly.  After applying Algorithm~\ref{Ea:realVESDA} to $(W, \vec b)$ for any deterministic vector $\vec b$, it follows that
    \begin{align*}
        L \sim \frac{1}{\sqrt{M}} \begin{bmatrix}
                \chi_M & & & \\
                \chi_{N-1} & \chi_{M-1}\\
                & \ddots & \ddots &   \\
                & & \chi_1 & \chi_{M-N +1}
            \end{bmatrix},
    \end{align*}
    where the entries are independent chi random variables with subscripts denoting the degrees of freedom.
    As $N/M \to c \in (0,1)$, $N \to \infty$, we have, entrywise
    \begin{align*}
        L \to  \begin{bmatrix}
                1 & & & \\
                \sqrt{c} & 1\\
                & \sqrt{c} & 1 &   \\
                & & \ddots & \ddots
            \end{bmatrix}.
    \end{align*}
    Thus, we can take $n = 2$ and find the estimator using Algorithm~\ref{Ea:realVESDB}
    \begin{align*}
    \widehat m_{0}(z) = \dfrac{\alpha_0^2-z-\beta_0^2+\sqrt{z-(\alpha_0+\beta_0)^2}\sqrt{z-(\alpha_0-\beta_0)^2}}{2z\beta_0^2}, \quad \alpha_0 \sim \frac{\chi_M}{\sqrt{M}},\quad \beta_0 \sim \frac{\chi_{N-1}}{\sqrt{M}}.
    \end{align*}
    This gives the limit
    \begin{align}\label{eq:ConvMP}
        \widehat m_{0}(z)  \to \dfrac{1-z-c+\sqrt{z-c_{+}}\sqrt{z-c_{-}}}{2c z}, \quad c_{\pm} = (1\pm\sqrt{c})^2,
    \end{align}
    which is the Stieltjes transform of the Marchenko--Pastur law.
    
\end{example}

\begin{example}\label{Ex:StdSpikedCov}
    We repeat the previous calculation for $\Sigma = \mathrm{diag}(\ell,1,\ldots,1)$ with $\ell>1$.  We again apply both stages of Algorithm~\ref{Ea:realVESD} explicitly but this time with $\vec b = \vec e_1$.  After applying Algorithm~\ref{Ea:realVESDA}, it follows that \cite{Bloemendal2013},
    \begin{align*}
        L \sim \frac{1}{\sqrt{M}} \begin{bmatrix}
                \sqrt{\ell} \chi_M & & & \\
                \chi_{N-1} & \chi_{M-1}\\
                & \ddots & \ddots &   \\
                & & \chi_1 & \chi_{M-N +1}
            \end{bmatrix},
    \end{align*}
    where the entries are independent chi random variables with subscripts denoting the degrees of freedom.
    As $N/M \to c \in (0,1)$, $N \to \infty$, we have, entrywise
    \begin{align*}
        {L} \to  \begin{bmatrix}
                \sqrt{\ell} & & & \\
                \sqrt{c} & 1\\
                & \sqrt{c} & 1 &   \\
                & & \ddots & \ddots
            \end{bmatrix}.
    \end{align*}
    We take $n = 3$ and apply Algorithm~\ref{Ea:realVESDB} to find the estimator
    \begin{align*}
        \widehat{m}_0(z) = \dfrac{1}{\widehat{\alpha}^2_0-z-\widehat{\alpha}_0^2\widehat{\beta}_0^2 \paren{\frac{\widehat{m}_{1}(z)}{1+\widehat{\beta}_0^2 \widehat{m}_{1}(z)}}}
    \end{align*}
    where $\widehat m_1(z)$ satisfies \eqref{eq:ConvMP}. Thus, the limit of $\widehat{m}_0(z)$ exists and can be explicitly determined, yielding after simplification
    \begin{align*}
        \widehat m_0(z) = \dfrac{-2z+\ell\paren{1-c+z+\sqrt{z-c_{+}}\sqrt{z-c_{-}}}}{2z\paren{(\ell+1)z+\ell(\ell-1+c)}}.
    \end{align*}
    This expression shows that $\widehat{m}_0(z)$ potentially has poles at $x_0 = \ell + \frac{\ell c}{\ell - 1}$ and $x_1 = 0$. However, by computing residues at these potential poles, we find
    \begin{align*}
        \mathrm{Res}_{z=x_0}(\widehat m_0(z)) = \dfrac{c+(\ell-1)\paren{1-\abs{\frac{(\ell-1)^2-c}{\ell-1}}-\ell}}{2(\ell-1)(\ell-1+c)}  \quad \text{and} \quad \mathrm{Res}_{z=x_1}(\widehat m_1(z)) = \frac{1-c-|1-c|}{2(\ell+(c-1))}=0.
    \end{align*}
    In this case, the VASD takes the form,
    \begin{align}\label{eq:StdSpikedCovVASD}
        \widehat \mu_0(\mathrm{d}x) = \dfrac{\ell \sqrt{c_{+}-x}\sqrt{x-c_{-}}}{2\pi x(\ell^2+\ell(c-1-x)+x)}\mathds{1}_{[c_{-},c_{+}]}(x)\mathrm{d}x + w_0\mathds{1}_{\ell>1+\sqrt{c}}\delta_{x_0}(\mathrm{d}x), \quad  w_0 = \dfrac{(\ell-1)^2-c}{(\ell-1)(\ell-1+c)}.
    \end{align}
    See Figure~\ref{fig:StdSpikedCov} for a visualization of the density at different values of $\ell$.

    \begin{figure}[tbp]
        \centering
        \begin{subfigure}{0.45\textwidth}
            \centering
            \includegraphics[width=\textwidth]{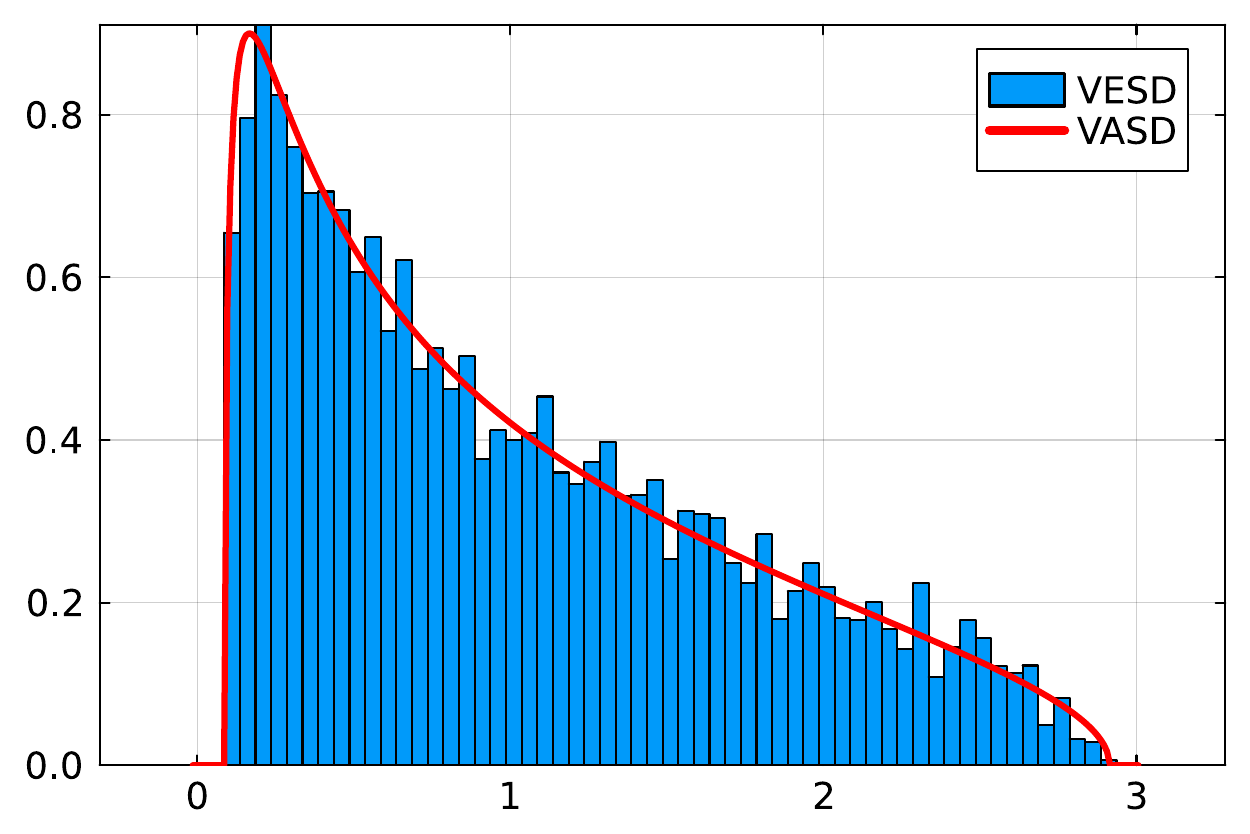}
            \caption{$\ell = 1$}
        \end{subfigure}
        \begin{subfigure}{0.45\textwidth}
            \centering
            \includegraphics[width=\textwidth]{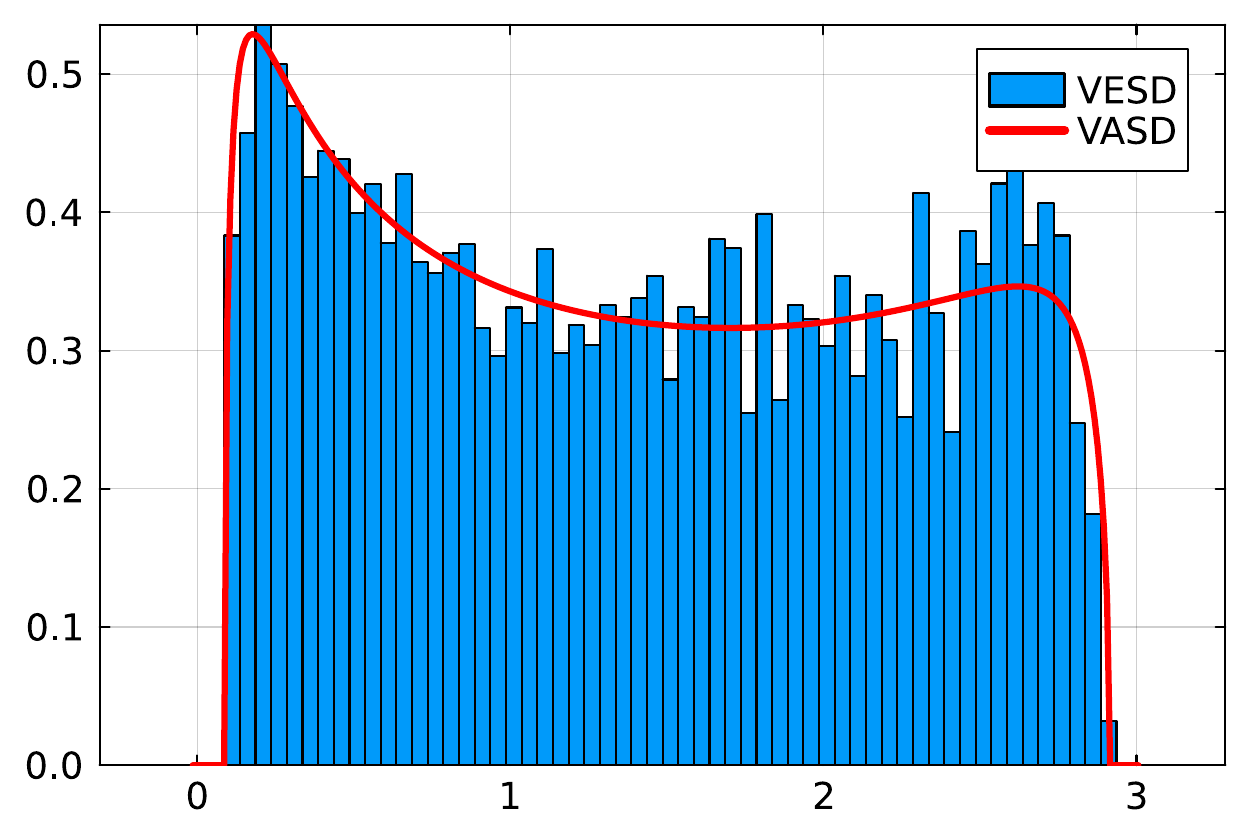}
            \caption{$\ell=1.4$}
        \end{subfigure}
        
        \vspace{0.5cm} 
        
        \begin{subfigure}{0.45\textwidth}
            \centering
            \includegraphics[width=\textwidth]{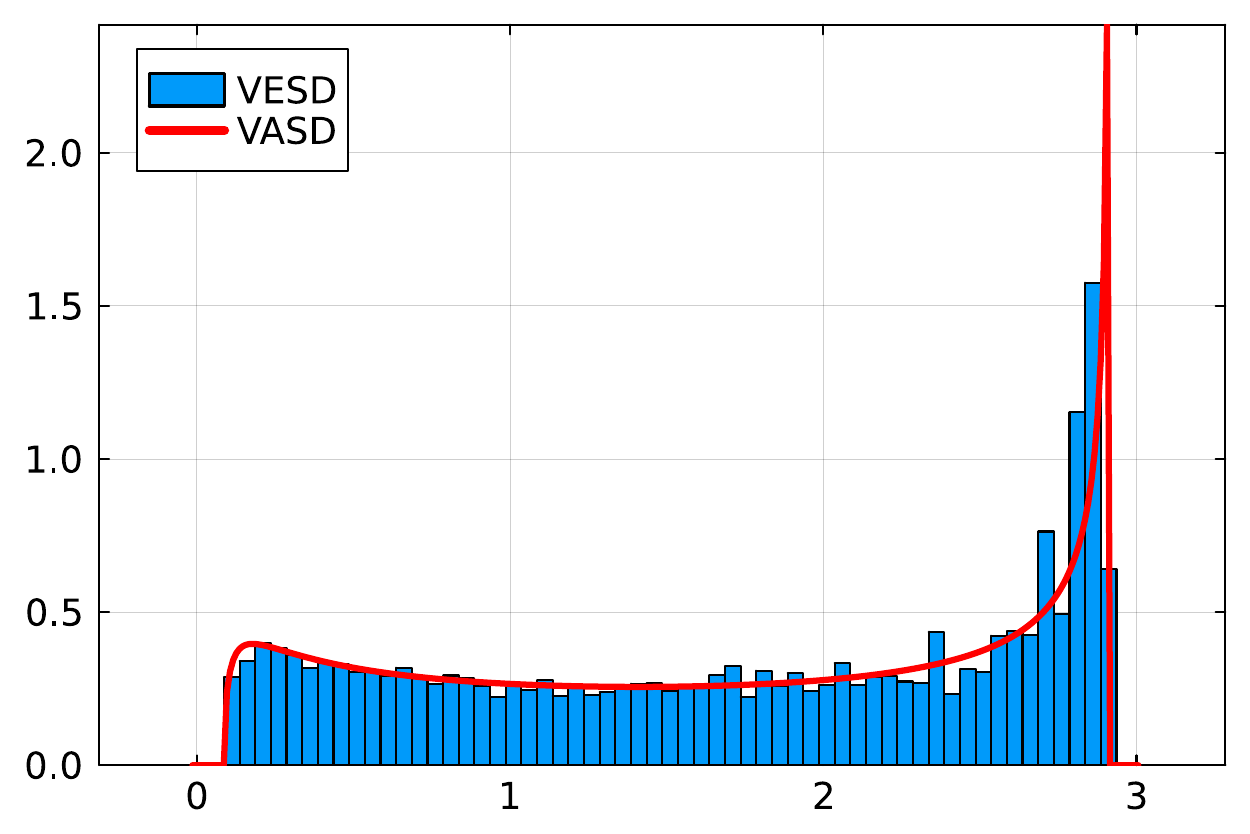}
            \caption{$\ell=1+\sqrt{0.5}$}
        \end{subfigure}
        \begin{subfigure}{0.45\textwidth}
            \centering
            \includegraphics[width=\textwidth]{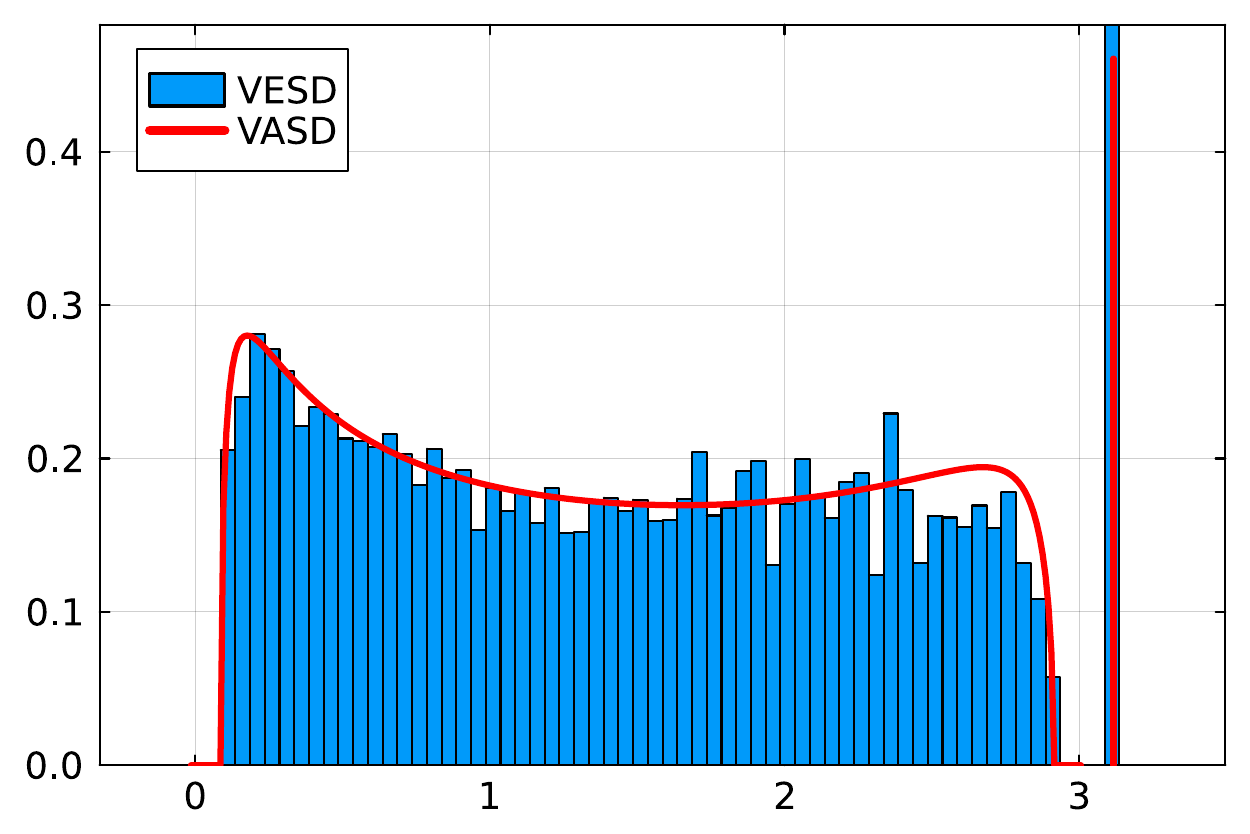}
            \caption{$\ell=2.2$}
        \end{subfigure}
        
        \caption{VESD of the sample covariance matrix in Example~\ref{Ex:StdSpikedCov} for $\vec{b} = \vec{e}_1$ with $N = 10000$, $M = 20000$, and $c = 0.5$, compared to the VASD from \eqref{eq:StdSpikedCovVASD} for different values of $\ell$.}
        \label{fig:StdSpikedCov}
    \end{figure}
    
\end{example}




\section{Large sample covariance matrices and local laws}\label{sec:Model&LocLaw}
Our analysis relies on the investigation of the spiked sample covariance matrix $W$ in (\ref{eq:SCM_Model}) and the associated non-spiked sample covariance matrix $W_0$ in (\ref{eq:NonSpikedModel}). We assume that the ASD associated with $W_0$ satisfies Assumption \ref{as:measurecond}. Specifically, we have that
\begin{align}\label{eq:NonSpikedESD}
    \mu_{W_0}\overset{N \gg 1}{\approx} \mu_{\mathrm{ASD}} , \quad \mu_{W_0} = \sum_{i=1}^N \frac{1}{N}\delta_{\lambda_i(W_0)},
\end{align}
where $\lambda_{i}(W_0)$, $i =1,2,\ldots,N,$ are the eigenvalues of $W_0$ and we assume that $\mu_{\mathrm{ASD}}$ has a density (see Theorem~\ref{thm:VESDHaar} below for a more precise statement), on a single interval, of the form
\begin{align}\label{eq:MeasureForm}
   \mu_{\mathrm{ASD}}(\mathrm{d} \lambda) = \varrho_{{\mathrm{ASD}}}(\lambda) \mathrm{d}\lambda= h_{\mathrm{ASD}}(\lambda) \mathds{1}_{[\gamma_{-},\gamma_{+}]}(\lambda)(\gamma_{+}-\lambda)^{1/2}(\lambda-\gamma_{-})^{1/2}\mathrm{d}\lambda.
\end{align}


To simplify our statements, we adopt the concept of \textit{stochastic domination} \cite{Knowles2017}.
\begin{definition}[Stochastic domination]
    (i) Let 
    \begin{align*}
        \xi = \paren{\xi^{(N)}(u): N\in \N, u\in U^{(N)}}, \quad \zeta = \paren{\zeta^{(N)}(u):N\in \N, u\in U^{(N)}},
    \end{align*}
    be two families of nonnegative random variables defined on the same probability space, where $U^{(N)}$ is a possibly $N$-dependent parameter set. We say $\xi$ is stochastically dominated by $\zeta$, uniformly in $u$, if for any fixed (small) $\epsilon>0$ and (large) $D>0$,
    \begin{align*}
        \sup_{u\in U^{(N)}} \Prb\paren{\xi^{(N)}(u)>N^{\epsilon}\zeta^{(N)}(u)}\leq N^{-D},
    \end{align*}
    for large enough $N\geq N_0(\epsilon,D)$, and we shall use the notation $\xi\prec \zeta$. If a family $\xi$ is not non-negative, then we write $\xi\prec \zeta$ or $\xi = \OO_{\prec}(\zeta)$ if $|\xi| \prec \zeta$.


    (ii) An event $\Xi$ is said to hold with overwhelming probability if for any constant $D>0$, $\Prb(\Xi)\geq 1-N^{-D}$ for sufficiently large $N$.
\end{definition}

\begin{remark}
    Stochastic domination will always be taken to be uniform in all parameters that are not explicitly fixed (such as the matrix indices, and $z$ that takes values in some compact set). Further, $N_0(\epsilon,D)$ may depend on quantities that are explicitly constant, such as $\tau_1$ in Assumption \ref{as:TechA} below. 
\end{remark}

\subsection{The deformed Marchenko-Pastur law and the asymptotics of the outliers}
We first present the deformed Marchenko– Pastur (MP) law.  The companion of the covariance matrix $W_0$ in \eqref{eq:NonSpikedModel} is denoted by
\begin{align}\label{eq:Companion}
    \bunderline{W}_0 = X^*\Sigma_0X.
\end{align}
It is well known that \cite{Knowles2017}, in general, the asymptotic density function of the ESD of $\bunderline{W}_0$ follows what is often referred to as the deformed MP law, denoted as $\mu_{\dMP}$, which is best described by its Stieltjes transform. Let $z\in \C_{+}$, the Stieltjes transform $m_{\dMP}(z)$ can be characterized as the unique solution of the equation \cite{Knowles2017},
\begin{align}
    z = f(m_{\dMP}), \quad \mathrm{Im}\, m_{\dMP}(z) > 0, \quad \mathrm{Im}\, z > 0,
\end{align}
where $f(z)$ is defined as
\begin{align}\label{eq:f_dMP}
    f(z) = -\frac{1}{z} +\frac{1}{M}\sum_{k=1}^{N}\frac{1}{z+\sigma_k^{-1}}.
\end{align}
We use $\varrho_{\dMP}$ to denote the density associated with $\mu_\dMP.$
We work within the framework imposed by the following assumptions, most of which were used in \cite{Ding2021}.
\begin{assumption}\label{as:TechA}
    \phantom{.}
    \begin{enumerate}[(a)]
    
        \item  \textbf{ On $X$ in \eqref{eq:SCM_Model}}: We assume \eqref{eq:AsymptRatio}, \eqref{eq:MomentCondition} and \eqref{eq:BddMoments} hold and that for $X=(x_{ij})$, $x_{ij}$, $1\leq i\leq N$, $1\leq j\leq M$, are iid real random variables.
        \item \textbf{On $\Sigma_0$ in \eqref{eq:Sigma0Eigen}:} We assume that for some small constant $0<\tau_1<1$,
        \begin{align}
            \tau_1 \leq \sigma_N \leq \sigma_{N-1}\leq \dots \leq \sigma_1 \leq \tau_1^{-1}.
        \end{align}
        We further assume $\Sigma_0$ is such that $\varrho_{\dMP}$ is supported on a single bulk component $\mathrm{supp}\,\mu_{\dMP} = [\gamma_{-},\gamma_{+}]$ and that there exists $\delta>0$ such that $w(x):= \varrho_{\dMP}(x)(x -\gamma_{+})^{-1/2}(x-\gamma_{-})^{-1/2}$ and $1/w(x)$ have analytic extensions to $\{z\in \C : \min_{x\in [\gamma_{-},\gamma_{+}]} |x-z|<\delta\}$ that are bounded above by a constant $D$. Moreover, we assume that
        \begin{align}
            \gamma_{\pm}\geq \tau_1, \quad \gamma_{+}-\gamma_{-}\geq \tau_1, \quad \min_i (\sigma_{i}^{-1}+m_{\dMP}(\gamma_{\pm}))\geq \tau_1.
        \end{align}
        \item \textbf{On the spikes in \eqref{eq:SigmaEigen}:} For some real fixed integer $r$ and $i\leq r$, we assume that there exists some constant $\varpi$ such that
        \begin{align}
            \tilde{\sigma}_i > -\frac{1}{m_{\dMP}(\gamma_{+})} + \varpi, \quad i\leq r.
        \end{align}
        We also assume that $\tilde{\sigma}_i, ~ 1\leq i\leq r$, are bounded.
        \item \textbf{On spike spacing:} We assume there exists $\gamma > 0$ such that for $N$ sufficiently large and $t$ sufficiently small
        \begin{align*}
            \mathbb P\left( \min_{\substack{i \neq j \\ i,j \leq r}} N^{1/2}|\lambda_j(W) - \lambda_i(W)| < t \right) \leq  t^\gamma.
        \end{align*}
    \end{enumerate}
\end{assumption} 

Recall that our model of interest \eqref{eq:SCM_Model} is formed by adding $r$ spikes to $\Sigma_0$, where $r\geq 0$ is some fixed integer.
The conditions in Assumption \ref{as:TechA}(b) rule out the existence of spikes in $\Sigma_0$ so that all possible spikes are exclusive to $\Sigma$, and also guarantee that $\varrho_{\dMP}$ displays a regular square-root behavior near the edges $\gamma_{\pm}$. For instance, this condition will be satisfied when the asymptotic spectral distribution of $\Sigma_0$ is supported on some interval $[a,b]\subset (0,\infty)$ and its density function is bounded from both above and below; see \cite[Example 2.9]{Knowles2017} or \cite[Corollary 3]{ElKaroui2007} for more details. Assumption \ref{as:TechA}(c), for reasons we will now describe, imposes the condition that each $\tilde{\sigma}_i$, $1\leq i\leq r$, generate a spike in $W$ that are at an $\OO(1)$ distance from $\gamma_+$ as $N \to \infty$ \cite{Ding2021c,Ding2019}.  Additionally, Assumption \ref{as:TechA}(d) imposes a constraint on how closely the outliers of $W$ can approach one another, ensuring that the gaps between them remain at least polynomially small with overwhelming probability.

\begin{lemma}\label{lem:EigValVecConv}
    Suppose Assumptions \ref{as:TechA}(a,b,c) hold and recall the function $f$ defined in \eqref{eq:f_dMP}. For all $1\leq i\leq r$, we have
    \begin{align}\label{eq:EigValVecConv}
        \abs{\lambda_i(W)- \gamma_i} = \OO_{\prec} (N^{-1/2}), \quad \gamma_i = f(-\tilde{\sigma}_i^{-1}), \quad \text{and} \quad \abs{ |\mathbf{u}_i ^* \mathbf{v}_i|^2-\frac{1}{\tilde{\sigma}_i}\dfrac{f^{\prime}(-\tilde{\sigma}_i^{-1})}{f(-\tilde{\sigma}_i^{-1})}} = \OO_{\prec} (N^{-1/2}),
    \end{align}
    where $\{\vec v_i\}$ and $\{\vec u_i\}$ are defined in \eqref{eq:SigmaEigen} and \eqref{eq_spectralW}, respectively.
\end{lemma}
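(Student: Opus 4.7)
The plan is to derive both estimates from the anisotropic local law for $W_0$ (and its spiked extension in \cite{Ding2019}) applied to a rank-$r$ reduction of the spike detection problem. Since $\Sigma - \Sigma_0 = \sum_{i=1}^r d_i\sigma_i \vec v_i\vec v_i^*$ has rank $r$, a standard identity $\det(AB-\lambda) = \det(BA-\lambda)$ (modulo zero eigenvalues) converts the eigenvalue equation for $W = \Sigma^{1/2}XX^*\Sigma^{1/2}$ into
\begin{align*}
    \det\bigl( \underline{W}_0 + X^*(\Sigma-\Sigma_0)X - \lambda\bigr) = 0,
\end{align*}
where $\underline W_0 = X^*\Sigma_0 X$. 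By Sylvester's identity, for $\lambda$ outside $\mathrm{supp}(\mu_{\dMP})$, this is equivalent to
\begin{align*}
    \det\bigl(I_r + \mathrm{diag}(d_i\sigma_i)\cdot G(\lambda)\bigr) = 0, \qquad G(\lambda)_{ij} := (X^*\vec v_i)^*(\underline W_0 - \lambda)^{-1}(X^*\vec v_j).
\end{align*}

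Next I would invoke the anisotropic local law for $\underline W_0$ (e.g.\ \cite{Knowles2017}) to replace $G(\lambda)$ by its deterministic equivalent. A direct computation shows $\E[G(\lambda)_{ij}]$ is diagonal up to negligible off-diagonal terms, with diagonal entries asymptotic to $-\sigma_i^{-1}(1 + \sigma_i m_{\dMP}(\lambda))^{-1}\cdot(\text{explicit factor})$; after simplification, the $i$th outlier equation reduces to $1 + \tilde\sigma_i m_{\dMP}(\gamma) = 0$, i.e.\ $m_{\dMP}(\gamma_i) = -\tilde\sigma_i^{-1}$, which by inversion of $m_{\dMP}$ via $f$ (c.f.\ \eqref{eq:f_dMP}) is exactly $\gamma_i = f(-\tilde\sigma_i^{-1})$. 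Assumption~\ref{as:TechA}(c) places $-\tilde\sigma_i^{-1}$ strictly inside the domain where $f$ is well-defined and $f'$ is nonzero, so the implicit-function-theorem argument gives $|\lambda_i(W)-\gamma_i| = \OO_\prec(N^{-1/2})$, the $N^{-1/2}$ coming from the optimal anisotropic local-law error bound on $G(\lambda)$.

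For the eigenvector overlaps, I would use the spectral resolution identity
\begin{align*}
    |\vec u_i^*\vec v_i|^2 = -\frac{1}{2\pi\ri}\oint_{\Gamma_i} \vec v_i^*(W - z)^{-1}\vec v_i\,\dd z,
\end{align*}
where $\Gamma_i$ is a small contour around $\gamma_i$ separating $\lambda_i(W)$ from all other eigenvalues (Assumption~\ref{as:TechA}(d) guarantees such separation holds with overwhelming probability at scale $N^{-1/2+\epsilon}$). Rewriting $(W-z)^{-1}$ in terms of the companion resolvent via the Woodbury identity and applying the anisotropic law then replaces the quadratic form by its deterministic counterpart up to $\OO_\prec(N^{-1/2})$ errors. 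Computing the residue at the simple pole $z = \gamma_i$ yields the stated expression $\tilde\sigma_i^{-1} f'(-\tilde\sigma_i^{-1})/f(-\tilde\sigma_i^{-1})$, essentially because the residue picks up the Jacobian of the spike equation, which is exactly $f'$ at the relevant point.

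The main obstacle is the eigenvector statement: one must control the resolvent $(W-z)^{-1}$ anisotropically at spectral parameters within $N^{-1/2+\epsilon}$ of the outlier $\lambda_i$, and extract residues uniformly. The eigenvalue rigidity from Assumption~\ref{as:TechA}(d) is essential here to ensure the contour $\Gamma_i$ stays at macroscopic distance from both the bulk and the other outliers with overwhelming probability; without it, one would not obtain an $\OO_\prec(N^{-1/2})$ rate. Both statements, however, are essentially consequences packaged in \cite{Ding2019}, so the proof reduces to verifying that Assumptions~\ref{as:TechA}(a,b,c) imply the hypotheses of those theorems and citing them.
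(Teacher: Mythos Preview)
The paper does not give its own proof here --- it simply cites \cite[Theorem 3.6]{Ding2021c}. Your sketch of the standard argument (determinant reduction to an $r\times r$ problem via Sylvester, anisotropic local law to replace $G(\lambda)$ by its deterministic equivalent, residue calculus for the eigenvector overlap) is the correct outline of how such results are established in that literature, and your closing remark that everything reduces to citing existing work is exactly how the paper handles it.

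One genuine issue: you invoke Assumption~\ref{as:TechA}(d) to ensure the contour $\Gamma_i$ separates $\lambda_i(W)$ from the other outliers, but the lemma only assumes (a,b,c). This is not a technicality --- the paper's subsequent remark \emph{derives} (d) (in the distinct-spike case) as a consequence of this very lemma, so using (d) in the proof would be circular. The cited result handles the eigenvector overlap without (d); the usual route is to enclose a cluster of nearby outliers in a single contour and analyze $\sum_{j\in I}|\vec u_j^*\vec v_i|^2$ (which collapses to the individual overlap when the $\gamma_i$ are separated), or to work directly with the rank-$r$ perturbation formula for the resolvent rather than a per-eigenvalue contour. Your claim that (d) is ``essential'' for the $N^{-1/2}$ rate is therefore too strong. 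As a minor point, the paper attributes this lemma to \cite{Ding2021c} rather than \cite{Ding2019}.
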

\begin{proof}
See \cite[Theorem 3.6]{Ding2021c}.  
\end{proof}

\begin{remark}
    
    The previous lemma shows that the condition in Assumption~\ref{as:TechA}(d) holds when the spikes in $\Sigma$ are distinct. Specifically, by a simple application of the mean value theorem, we have $\gamma_i - \gamma_j = f'(\xi)(\tilde{\sigma}_i - \tilde{\sigma}_j)$, with condition (c) in Assumption \ref{as:TechA} ensuring $f'(\xi) \asymp 1$. Thus, Assumption~\ref{as:TechA}(d) holds as a consequence of the convergence of $\lambda_i(W)$, assuming $\{\gamma_i\}$ are distinct.
\end{remark}

\begin{remark}\label{rmk_detectable}
To be detectable, these spikes in $\Sigma$ should be strong, beyond the so-called BBP transition threshold \cite{Baik2005,perry2018optimality}, so that they produce the corresponding outlier eigenvalues in $W$. By Assumption~\ref{as:TechA}, the so-called spiked eigenvalues of $W$ will satisfy
\begin{align*}
    \lambda_j(W) > \gamma_+, \quad j = 1,2,\ldots,r.
\end{align*}
Our assumptions impose that these spiked eigenvalues have asymptotic locations
\begin{align*}
    |\lambda_j(W) - \gamma_j| \overset{N \gg 1}{\longrightarrow} 0,  \quad j = 1,2,\ldots,r.
\end{align*}
\end{remark}

\subsection{Local laws}\label{subsec:LocLaw}
Consider the $(N+M)\times (N+M)$ linearized matrix $\widetilde{H}$ defined as
\begin{align}\label{eq:LinMatrix}
    \widetilde{H} = \widetilde{H}(z,X):= \sqrt{z} \begin{bmatrix}
        0 & \Sigma_0^{1/2} X\\
        X^* \Sigma_0^{1/2} & 0
    \end{bmatrix}.
\end{align}
Let $\widetilde{G}_1(z) = (W_0-z)^{-1}$, $\widetilde{G}_2(z) = (\bunderline{W}_0-z)^{-1}$ and consider
\begin{align}\label{eq:NonSpikedGenRes}
    \widetilde{G}(z) = \widetilde{G}(z,X):= (\widetilde{H}-z)^{-1} = \begin{bmatrix}
        \widetilde{G}_1(z) & z^{-1/2} \widetilde{G}_1(z)\Sigma_0^{1/2}X\\
        z^{-1/2} X^* \Sigma_0^{1/2}\widetilde{G}_1(z) & \widetilde{G}_2(z)
    \end{bmatrix},
\end{align}
where the last equality follows from Schur complements. Define the deterministic approximation of $\widetilde{G}$ as
\begin{align}\label{eq:DetEquiv}
    \widetilde{\Pi}(z) := \begin{bmatrix}
        \widetilde{\Pi}_1(z) & 0 \\
        0 & \widetilde{\Pi}_2(z)
    \end{bmatrix}:= \begin{bmatrix}
        -z^{-1}(I+m_{\dMP}(z)\Sigma_0)^{-1} & 0\\
        0 & m_{\dMP}(z)
    \end{bmatrix}.
\end{align}


We state results relating to the so-called anisotropic local laws \cite{Knowles2017} for the non-spiked model. Throughout the following, we use the notation $z= \lambda+i\eta$ with $\eta>0$ for the spectral parameter $z$.
Fix some small constant $\tau>0$ and consider the set of spectral parameters 
\begin{align}
    \mathcal{D} = \mathcal{D}(\tau,M) := \{z \in \C_{+} : |z|\geq \tau, |\lambda|\leq \tau^{-1}, M^{-1+\tau}\leq \eta \leq \tau^{-1}\}.
\end{align}
Moreover, define the set $\mathcal{D}_0$ as 
\begin{align}
    \mathcal{D}_0 = \mathcal{D}_0(\tau,M) := \{z\in \C_{+} : \tau\leq \lambda\leq \tau^{-1}, 0< \eta\leq \tau^{-1}, \mathrm{dist}(\lambda,\mathrm{supp}\,\varrho_\dMP)\geq M^{-2/3+\tau}\},
\end{align}
and the control parameter
\begin{align}\label{eq:ControlParam}
    \Psi(z) := \sqrt{\frac{\mathrm{Im}~m_{\dMP}(z)}{M\eta}} + \frac{1}{M\eta} \mathds{1}(z\notin \mathcal{D}_0).
\end{align}
Importantly, we have for all $z \in \mathcal D(\tau,M)$, $\Psi(z) = \OO(M^{-\tau/2}).$
We also note that $\mathrm{Im}~m_{\dMP}(z)$ can be bounded as follows:
\begin{align}\label{eq:mImProp}
    \mathrm{Im}~m_{\dMP}(z) \asymp \begin{cases} 
    \sqrt{\kappa + \eta}, & \text{if } \lambda \in \operatorname{supp} \varrho_{\dMP}, \\ 
    \frac{\eta}{\sqrt{\kappa + \eta}}, & \text{otherwise},
    \end{cases}
\end{align}
where $\kappa:= \mathrm{dist}(\lambda,\partial (\mathrm{supp}\,{\varrho_\dMP}))$.
\begin{lemma}[Anisotropic local law]\label{lem:NonSpikedLocLaw}
    Suppose Assumption~\ref{as:TechA}(a,b) holds. For any unit deterministic vectors $\mathbf{u},\mathbf{v}\in \R^{M+N}$, we have that for all $z\in \mathcal{D}\cup \mathcal{D}_0$
    \begin{align*}              
    \abs{\mathbf{u}^*{\paren{\widetilde{G}(z)-\widetilde{\Pi}(z)}}\mathbf{v}} \prec \Psi(z),
    \end{align*}
    and therefore, for any unit deterministic vectors $\mathbf{u},\mathbf{v}\in \R^{N}$
    \begin{align*}
    \abs{\mathbf{u}^*\paren{\widetilde{G}_1(z)+\frac{1}{z}(I+m_{\dMP}(z)\Sigma_0)^{-1}}\mathbf{v}} \prec \Psi(z).
    \end{align*}
\end{lemma}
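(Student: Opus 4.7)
The statement is essentially the anisotropic local law for sample covariance matrices in the formulation of Knowles--Yin \cite{Knowles2017}, adapted to the linearization $\widetilde H$ defined in \eqref{eq:LinMatrix}.  I would organize the argument into three stages.

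First, I would verify that under Assumption \ref{as:TechA}(a,b) the population covariance $\Sigma_0$ and spectral parameter domain $\mathcal{D}\cup\mathcal{D}_0$ fall into the admissible class of \cite{Knowles2017,Pillai2014}.  The key hypotheses to check are: the bounded spectrum $\tau_1 \le \sigma_i \le \tau_1^{-1}$, the single-interval regular square-root behavior of $\varrho_{\dMP}$ at $\gamma_\pm$ (which gives the stability estimate $\min_i(\sigma_i^{-1} + m_{\dMP}(\gamma_\pm)) \ge \tau_1$ needed for invertibility of the self-consistent equation), and the moment bounds \eqref{eq:BddMoments}.  With these verified, the deterministic equivalent $\widetilde{\Pi}(z)$ is the correct fixed point of the self-consistent equation associated with $\widetilde H$.

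Second, for the entrywise version I would follow the standard fixed-point / Schur-complement strategy: using the block form of $\widetilde G$ together with the Schur complement identity for diagonal resolvent entries, each entry $\widetilde G_{ii}(z)$ is written in terms of a quadratic form involving the minor resolvent.  Applying the large-deviation bound for quadratic forms of iid vectors gives concentration of this quadratic form around its conditional expectation with fluctuation of order $\Psi(z)$.  Iterating yields an approximate self-consistent equation for $m_{\dMP}$, and the stability bound from step one promotes this to the entrywise local law $|\widetilde G_{ab}(z) - \widetilde \Pi_{ab}(z)| \prec \Psi(z)$ on $\mathcal D \cup \mathcal D_0$.

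Third, to upgrade from entrywise to anisotropic bounds I would appeal to the polynomialization / moment method developed in \cite[Section 5]{Knowles2017}: expand $\mathbf u^*(\widetilde G - \widetilde \Pi)\mathbf v$ as a sum over entries, raise to a high moment, and control the resulting Feynman-diagram sums using the entrywise law together with the fluctuation-averaging lemma.  The resulting bound is again $\Psi(z)$, uniformly in unit $\mathbf u,\mathbf v \in \mathbb R^{N+M}$ and $z \in \mathcal D \cup \mathcal D_0$.  The second estimate then follows immediately from the first by restricting $\mathbf u,\mathbf v$ to be supported on the first $N$ coordinates of $\mathbb R^{N+M}$: the off-diagonal blocks of $\widetilde \Pi(z)$ vanish, and the upper-left block of $\widetilde G(z)$ is $\widetilde G_1(z)$ with deterministic equivalent $-z^{-1}(I + m_{\dMP}(z)\Sigma_0)^{-1}$.

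The main technical obstacle is the anisotropic upgrade in the third step, since the polynomial/fluctuation-averaging estimates must be carried out uniformly on the entire domain $\mathcal D \cup \mathcal D_0$, including the region $\mathcal D_0$ where $\eta$ can be arbitrarily small but $\lambda$ is separated from the support.  Fortunately this is exactly the setting treated in \cite{Knowles2017,Pillai2014}, and the additive $(M\eta)^{-1}\mathds 1(z\notin \mathcal D_0)$ correction in the definition of $\Psi(z)$ is precisely what is needed to absorb the loss incurred in the interior.  Thus, modulo verifying that the hypotheses of those papers apply under Assumption \ref{as:TechA}(a,b), the lemma follows.
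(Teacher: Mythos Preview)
Your proposal is correct and, like the paper, ultimately reduces the lemma to the anisotropic local law of \cite{Knowles2017}. The paper's proof, however, is far more minimal: it does not re-sketch the Schur-complement / fluctuation-averaging / polynomialization machinery at all, but instead addresses the one point you gloss over, namely that the linearization $\widetilde H$ in \eqref{eq:LinMatrix} is \emph{not} the one used in \cite{Knowles2017}. The paper simply exhibits the diagonal conjugation
\[
\begin{bmatrix} \sqrt{z}\,\Sigma_0^{1/2} & 0 \\ 0 & I \end{bmatrix}(\widetilde H_{\mathrm{KY}}-z)\begin{bmatrix} \sqrt{z}\,\Sigma_0^{1/2} & 0 \\ 0 & I \end{bmatrix}
= \widetilde H - z,
\]
so that the resolvent bound for one linearization transfers to the other (the conjugating matrix and its inverse being bounded on $\mathcal D\cup\mathcal D_0$ by Assumption~\ref{as:TechA}(b)). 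Your three-stage outline is a faithful summary of what happens \emph{inside} \cite{Knowles2017}; the paper's proof is just the bookkeeping needed to invoke that result.
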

\begin{proof} 
      In \cite{Knowles2017}, for example, the authors use the alternate definition
    \begin{align*}
        \widetilde{H} =  \begin{bmatrix}
        - \Sigma_0^{-1}  + z & X\\
        X^* & 0
    \end{bmatrix}.
    \end{align*}
    With this, we note that
    \begin{align*}
    \begin{bmatrix} \sqrt{z} \Sigma_0^{1/2} & 0 \\ 0 & I \end{bmatrix} (\widetilde{H} - z) \begin{bmatrix} \sqrt{z} \Sigma_0^{1/2} & 0 \\ 0 & I \end{bmatrix} = \begin{bmatrix}
        - z I & \sqrt{z}\Sigma_0^{1/2} X\\
        \sqrt{z} X^*\Sigma_0^{1/2} & -z I 
    \end{bmatrix}.
    \end{align*}
    Based on this, and the assumptions put on $\Sigma_0$, it will suffice to analyze either matrix.
\end{proof}

\begin{remark}\label{r:rand_vec}
    It is important to note that the previous results also apply to random unit vectors $\mathbf{u}, \mathbf{v} \in \mathbb S^{N-1}$ drawn from distributions within the same probability space. Specifically, consider $z\in \mathcal{D}\cup\mathcal{D}_0$ and let $\Omega_{\epsilon,\mathbf{u},\mathbf{v}}$ denote the event where $|\mathbf{u}^*{\Sigma_0}^{-1}(\widetilde{G}(z)-\widetilde{\Pi}(z)){\Sigma_0}^{-1}\mathbf{v}| > M^{\epsilon}\Psi(z)$. Then, we have the following expression for the probability
    \begin{align*}
        \Prb(\Omega_{\epsilon,\mathbf{u},\mathbf{v}}) = \E \braces{\mathds{1}_{\Omega_{\epsilon,\mathbf{u},\mathbf{v}}}\mathds{1}_{\mathbf{u},\mathbf{v}\in S}} = \E \braces{\mathds{1}_{\mathbf{u},\mathbf{v}\in S}\E\braces{\mathds{1}_{\Omega_{\epsilon,\mathbf{u},\mathbf{v}}}|\mathbf{u},\mathbf{v}}}.
    \end{align*}
    By the uniformity of stochastic domination, $\E\braces{\mathds{1}_{\Omega_{\epsilon,\mathbf{u},\mathbf{v}}}|\mathbf{u},\mathbf{v}}\leq N^{-D}$, $N > N_0(\epsilon,D)$,  we see that Lemma~\ref{lem:NonSpikedLocLaw} also holds for such random vectors.
\end{remark}

Before we extend the local laws to the spiked model, we discuss some technical consequences of Assumption~\ref{as:TechA}(b,c).  In relation to Assumption~\ref{as:TechA}(b), consider the function
\begin{align*}
    f_i(z) = 1 + m_{\dMP}(z) \sigma_i, \quad z \in \mathcal R = \{ \lambda + \mathrm{i} \eta ~:~ \gamma_- - \delta \leq \lambda \leq \gamma_+ + \delta, 0 < \eta < \delta\},
\end{align*}
for $\delta > 0$. From the assumptions made, it follows that $f_i(z)$ extends to be uniformly $1/2$-H\"older continuous on $\overline{\mathcal R}$.  Thus, if the boundary value of $f_i(z)$ has a uniform lower bound on $[\gamma_-,\gamma_+]$, by taking $\delta$ sufficiently small, we will have a uniform lower bound on the closure of $\mathcal R$. Indeed, Assumption \ref{as:TechA}(b) implies this lower bound and therefore for a uniform constant $\tau_2$ and
\begin{align}\label{eq:fest}
\tau_2^{-1} \geq |f_i(z)| \geq \tau_2, \quad z \in \overline{\mathcal R}.
\end{align}
Concerning Assumption~\ref{as:TechA}(c), consider the function
\begin{align*}
    g_i(z) = d_i^{-1} + 1 - 1/f_i(z) = \frac{1 + m_{\dMP}(z) \tilde \sigma_i}{d_i f_i(z)}.
\end{align*}
The arguments made for $f_i$ apply to $w_i$ giving
\begin{align}\label{eq:west}
\tau_2^{-1} \geq |g_i(z)| \geq \tau_2, \quad z \in \overline{\mathcal R}.
\end{align}

\begin{remark}
    The maximum modulus principle can then be used to extend the set $\mathcal R$ in \eqref{eq:fest} and \eqref{eq:west} to be unbounded in the imaginary direction, i.e., for $0 < \eta < \infty$. 
\end{remark}

 Continuing, let 
\begin{align}\label{eq:SpikedCompanion}
    \bunderline{W} = X^*\Sigma X,
\end{align}
represent the companion matrix associated to $W$, and define $G_1(z) = (W-z)^{-1}$ and $G_2(z) = (\bunderline{W}-z)^{-1}$.
The spiked counterparts of the matrices in \eqref{eq:LinMatrix} and \eqref{eq:NonSpikedGenRes} are given by
\begin{align}
    H = H(z,X) := \sqrt{z} \begin{bmatrix}
        0 & \Sigma^{1/2}X \\
        X^*\Sigma^{1/2} & 0
    \end{bmatrix},
\end{align}
and $G(z) = (H-z)^{-1}$. Denote $\widehat{\Sigma}_0\in \R^{N+M}$ as
\begin{align*}
    \widehat{\Sigma}_0 := \begin{bmatrix}
        \Sigma_0 & 0\\
        0 & I
    \end{bmatrix},
\end{align*}
and similarly define $\widehat{\Sigma}$. Let ${V}_r$ be a matrix formed using the first $r$ spiked eigenvectors of $\Sigma$ and set ${D}_r = \mathrm{diag}(d_1,\dots,d_r)$. 
We first represent the resolvent of the spiked model in terms of its non-spiked counterpart.
\begin{lemma}\label{lem:SpikedvsNoSpikeRes}
    The resolvent $G_1(z)$ of the spiked covariance model can be expressed in terms of $\widetilde{G}_1(z)$ as follows: 
    \begin{align}\label{eq:SpikedvsSpikeRes}
        \Sigma_0^{-1/2}\Sigma^{1/2} G_1(z) \Sigma^{1/2}\Sigma_0^{-1/2} = \widetilde{G}_1(z)-z\widetilde{G}_1(z){V}_r ({D}_r^{-1}+I+z{V}_r^*\widetilde{G}_1(z){V}_r)^{-1}{V}_r^*\widetilde{G}_1(z).
    \end{align}
\end{lemma}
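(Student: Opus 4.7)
The idea is to remove the square roots on the left-hand side by conjugation, reducing the identity to a comparison of two resolvents of matrices that differ by a rank-$r$ perturbation, and then apply the Woodbury identity.

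First, since $\Sigma$ is positive definite (Assumption~\ref{as:TechA}(b)), $W = \Sigma^{1/2} X X^* \Sigma^{1/2}$ satisfies
\begin{align*}
    G_1(z) = (W-z)^{-1} = \Sigma^{-1/2}\paren{XX^* - z\Sigma^{-1}}^{-1} \Sigma^{-1/2}.
\end{align*}
Because $\Sigma$ and $\Sigma_0$ share the eigenbasis $\{\mathbf v_i\}$, their square roots commute, and hence
\begin{align*}
    \Sigma_0^{-1/2}\Sigma^{1/2} G_1(z) \Sigma^{1/2}\Sigma_0^{-1/2} = \Sigma_0^{-1/2}\paren{XX^* - z\Sigma^{-1}}^{-1}\Sigma_0^{-1/2}.
\end{align*}
Exactly the same manipulation applied to $W_0$ gives $\widetilde{G}_1(z) = \Sigma_0^{-1/2}(XX^*-z\Sigma_0^{-1})^{-1}\Sigma_0^{-1/2}$. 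So the identity to prove reduces to comparing these two resolvents.

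Second, I would quantify the difference between $\Sigma^{-1}$ and $\Sigma_0^{-1}$. A direct spectral computation using \eqref{eq:SigmaEigen} and \eqref{eq:Sigma0Eigen} yields
\begin{align*}
    \Sigma_0^{-1} - \Sigma^{-1} = \sum_{i=1}^r \frac{d_i}{(1+d_i)\sigma_i}\mathbf v_i \mathbf v_i^* = V_r T_r V_r^*, \qquad T_r := \mathrm{diag}\!\paren{\tfrac{d_i}{(1+d_i)\sigma_i}}_{i=1}^r,
\end{align*}
so $XX^* - z\Sigma^{-1}$ is a rank-$r$ update of $XX^* - z\Sigma_0^{-1}$, namely $(XX^* - z\Sigma_0^{-1}) + zV_rT_rV_r^*$. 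Applying the Woodbury identity (Lemma~\ref{lem:Woodbury}) to invert this update gives
\begin{align*}
    \paren{XX^* - z\Sigma^{-1}}^{-1} = \paren{XX^* - z\Sigma_0^{-1}}^{-1} - \paren{XX^* - z\Sigma_0^{-1}}^{-1} V_r \bracket{(zT_r)^{-1} + V_r^*\paren{XX^* - z\Sigma_0^{-1}}^{-1} V_r}^{-1} V_r^* \paren{XX^* - z\Sigma_0^{-1}}^{-1},
\end{align*}
where invertibility of $D_r$ (and hence $T_r$) is guaranteed by $d_i>0$ for $i\leq r$, and the Schur-type invertibility of the bracket follows from $\mathrm{Im}\,z \neq 0$.

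Finally, I would conjugate this identity by $\Sigma_0^{-1/2}$ and absorb the remaining $\Sigma_0^{1/2}$'s into the $V_r$'s using the eigenvector identity $\Sigma_0^{1/2} V_r = V_r S_r^{1/2}$ with $S_r := \mathrm{diag}(\sigma_1,\dots,\sigma_r)$. This gives $\Sigma_0^{-1/2}(XX^*-z\Sigma_0^{-1})^{-1}V_r = \widetilde{G}_1(z) V_r S_r^{1/2}$ and the transpose on the right, while the middle factor becomes
\begin{align*}
    (zT_r)^{-1} + S_r^{1/2} V_r^* \widetilde{G}_1(z) V_r S_r^{1/2} = S_r^{1/2}\bracket{z^{-1}(D_r^{-1}+I) + V_r^*\widetilde{G}_1(z)V_r}S_r^{1/2},
\end{align*}
using the factorization $T_r^{-1} = S_r^{1/2}(D_r^{-1}+I)S_r^{1/2}$. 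The three $S_r^{1/2}$ factors on the outside cancel the three coming from the middle inverse, leaving precisely $-z\widetilde G_1(z) V_r(D_r^{-1}+I+zV_r^*\widetilde G_1(z)V_r)^{-1}V_r^* \widetilde G_1(z)$, which is the desired identity. The proof is purely algebraic; the only point requiring care is the bookkeeping in the final $S_r^{1/2}$ cancellation.
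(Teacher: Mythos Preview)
Your proof is correct and complete; the Woodbury reduction after conjugating out the square roots is exactly the right move, and the $S_r^{1/2}$ bookkeeping is handled cleanly. The paper itself does not give an argument here but simply cites \cite[Lemma~C.1]{Ding2021}, so there is nothing to compare against in this text; your derivation is the standard one and almost certainly coincides with what is in that reference.
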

\begin{proof}
    See Lemma C.1 in \cite{Ding2021}.
\end{proof}

\begin{lemma}\label{lem:SpikedvsNoSpikeCompRes}
    For any vector $\mathbf{u} \in \R^M$, denote $\widetilde{\mathbf{u}} \in \R^{N+M}$ as the natural embedding of $\mathbf{u}$ such that
    \begin{align}\label{eq:uEmb}
        \widetilde{\mathbf{u}} = 
        \begin{bmatrix}
        0 \\ 
        \mathbf{u}
        \end{bmatrix}.
    \end{align}
    Moreover, denote $\widehat{V}_r \in \R^{(N+M) \times r}$ as the natural embedding of $V_r$ such that
    \begin{align}\label{eq:VEmb}
        \widehat{V}_r = 
        \begin{bmatrix}
        V_r \\ 
        0
        \end{bmatrix}.
    \end{align}
    Then we have that
    \begin{align*}
        {\mathbf{u}}^*  G_2(z) {\mathbf{v}} = 
        \mathbf{u}^* \widetilde{G}_2(z) \mathbf{v} 
        - z \widetilde{\mathbf{u}}^* \widetilde{G}(z) \widehat{V}_r 
        \left( 
        D_r^{-1} + I + z \widehat{V}_r^* \widetilde{G}(z) \widehat{V}_r
        \right)^{-1} 
        \widehat{V}_r^* \widetilde{G}(z) \widetilde{\mathbf{v}}.
    \end{align*}
\end{lemma}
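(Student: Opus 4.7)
The plan is to mirror the argument behind Lemma~\ref{lem:SpikedvsNoSpikeRes}, which handles $G_1$ via a Woodbury-type expansion of a low-rank perturbation of $W_0$, and adapt it to the companion matrix. The starting point is the observation that, because $\Sigma - \Sigma_0 = \Sigma_0^{1/2} V_r D_r V_r^* \Sigma_0^{1/2}$, the companion matrices differ by a low-rank update,
\begin{align*}
    \bunderline{W} - \bunderline{W}_0 = X^* (\Sigma - \Sigma_0) X = A D_r A^*, \qquad A := X^* \Sigma_0^{1/2} V_r \in \mathbb{R}^{M \times r}.
\end{align*}
Applying the Woodbury identity to $(\bunderline{W}_0 + A D_r A^* - z)^{-1}$ gives
\begin{align*}
    \mathbf{u}^* G_2(z) \mathbf{v} = \mathbf{u}^* \widetilde{G}_2(z) \mathbf{v} - \mathbf{u}^* \widetilde{G}_2(z) A \bigl( D_r^{-1} + A^* \widetilde{G}_2(z) A \bigr)^{-1} A^* \widetilde{G}_2(z) \mathbf{v}.
\end{align*}
It remains to rewrite each factor through the linearization $\widetilde{G}(z)$ and the embedding $\widehat{V}_r$.

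The next step uses the explicit block form of $\widetilde{G}$ stated in \eqref{eq:NonSpikedGenRes}. A direct computation with $\widetilde{\mathbf u}$ and $\widetilde{\mathbf v}$ as in \eqref{eq:uEmb} and $\widehat V_r$ as in \eqref{eq:VEmb} yields
\begin{align*}
    \widetilde{\mathbf{u}}^* \widetilde{G}(z) \widehat{V}_r &= z^{-1/2} \mathbf{u}^* X^* \Sigma_0^{1/2} \widetilde{G}_1(z) V_r, \\
    \widehat{V}_r^* \widetilde{G}(z) \widetilde{\mathbf{v}} &= z^{-1/2} V_r^* \widetilde{G}_1(z) \Sigma_0^{1/2} X \mathbf{v}, \\
    \widehat{V}_r^* \widetilde{G}(z) \widehat{V}_r &= V_r^* \widetilde{G}_1(z) V_r.
\end{align*}
The standard ``swap'' identity $\widetilde{G}_1(z) \Sigma_0^{1/2} X = \Sigma_0^{1/2} X \widetilde{G}_2(z)$ (equivalently $Y(Y^*Y-z)^{-1} = (YY^*-z)^{-1} Y$ with $Y=\Sigma_0^{1/2} X$) then converts these expressions into $z^{1/2} \widetilde{\mathbf u}^* \widetilde G(z) \widehat V_r = \mathbf{u}^* \widetilde{G}_2(z) A$ and $z^{1/2} \widehat V_r^* \widetilde G(z) \widetilde{\mathbf v} = A^* \widetilde{G}_2(z) \mathbf{v}$.

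Finally, to identify the middle factor, I would use the push-through identity
\begin{align*}
    Y (Y^* Y - z)^{-1} Y^* = (Y Y^* - z)^{-1} Y Y^* = I + z \widetilde{G}_1(z)
\end{align*}
with $Y = \Sigma_0^{1/2} X$, which gives
\begin{align*}
    A^* \widetilde{G}_2(z) A = V_r^* \Sigma_0^{1/2} X \widetilde{G}_2(z) X^* \Sigma_0^{1/2} V_r = I + z V_r^* \widetilde{G}_1(z) V_r = I + z \widehat{V}_r^* \widetilde{G}(z) \widehat{V}_r,
\end{align*}
since $V_r^* V_r = I$. Substituting these three identities back into the Woodbury expression and collecting the two factors of $z^{1/2}$ into a single prefactor $z$ produces exactly the claimed formula. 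I do not anticipate a genuine obstacle here; the only place where one must be careful is the consistent bookkeeping of the $z^{-1/2}$ factors coming out of the off-diagonal blocks of $\widetilde G(z)$ and the sign conventions inherited from the Schur complement used in \eqref{eq:NonSpikedGenRes}.
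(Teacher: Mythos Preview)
Your proof is correct and complete. The paper itself does not give an independent argument but refers to an external source (Lemma~C.2 in \cite{Ding2021}); your Woodbury-plus-push-through derivation is precisely the standard route for such identities and almost certainly coincides with the referenced proof.
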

\begin{proof}
    See Lemma C.2 in \cite{Ding2021}.
\end{proof}

Denote the spectral parameter set
\begin{align}\label{eq:SpikedSpecSet}
    \widetilde{\mathcal{D}} = \widetilde{\mathcal{D}}(\tau,N) = \paren{\mathcal{D}(\tau,N)\cup \mathcal{D}_0(\tau,N)}\cap \brac{\min_{1\leq i\leq r} |z-f(-\tilde{\sigma}_i^{-1})|\geq \tau}, 
\end{align}
where $\tau$ is some small fixed constant. The following lemma presents a generalized version of the local laws applicable to the spiked model.
\begin{lemma}\label{lem:SpikedLocLaw}
    Consider the eigenvectors $\{\mathbf{v}_i\}$ of $\Sigma$ and any unit deterministic vectors $\mathbf{u},\mathbf{v}\in \R^{N}$. Define $ {u}_i = \mathbf{v}_i^*\mathbf{u}$, ${v}_i = \mathbf{v}_i^*\mathbf{v}$ and
    \begin{align}\label{eq:wLdef}
        \mathcal{L}_i = \mathds{1}(i\leq r) z^{-1}(1+m_{\dMP}(z)\sigma_i)^{-2}(d_i^{-1}+1-(1+m_{\dMP}(z)\sigma_i)^{-1})^{-1}.
    \end{align}
    Assume Assumption \ref{as:TechA} holds, then for all $z\in \widetilde{\mathcal{D}}$, we have
    \begin{align}\label{eq:SpikedLawG1}
        \mathbf{u}^*G_1(z)\mathbf{v} = \sum_{i=1}^N \frac{{u}_i^*{v}_i}{1+d_i} \paren{\mathbf{v}_i^* \widetilde{G}_1(z)\mathbf{v}_i - \mathcal{L}_i} + \OO_{\prec}(\Psi(z)).
    \end{align}
    Similarly, for any deterministic vectors $\mathbf{u},\mathbf{v}\in \R^{M}$,
    \begin{align}\label{eq:SpikedLawG2}
        \mathbf{u}^*G_2(z)\mathbf{v} = \mathbf{u}^*\widetilde{G}_2(z)\mathbf{v} + \OO_{\prec}(\Psi(z)).
    \end{align}
\end{lemma}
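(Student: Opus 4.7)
The plan is to combine the resolvent identity from Lemma~\ref{lem:SpikedvsNoSpikeRes}, the anisotropic local law (Lemma~\ref{lem:NonSpikedLocLaw}), and the edge estimate \eqref{eq:west}. Setting $P := \Sigma^{1/2}\Sigma_0^{-1/2}$, which is diagonal in the $\{\vec v_i\}$ basis with eigenvalues $(1+d_i)^{1/2}$, Lemma~\ref{lem:SpikedvsNoSpikeRes} gives the exact identity
\begin{align*}
    \vec u^* G_1(z) \vec v = \vec a^*\widetilde G_1(z)\vec b - z\vec a^*\widetilde G_1(z) V_r M^{-1} V_r^*\widetilde G_1(z)\vec b,
\end{align*}
with $\vec a := P^{-1}\vec u$, $\vec b := P^{-1}\vec v$, and $M := D_r^{-1} + I + zV_r^*\widetilde G_1(z) V_r$; the remainder of the argument is to estimate each piece on the right.

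First, I would show that the main term $\vec a^*\widetilde G_1\vec b$ and the target $\sum_i\frac{u_i^*v_i}{1+d_i}\vec v_i^*\widetilde G_1\vec v_i$ share the common deterministic approximation $-z^{-1}\sum_i \frac{u_i^*v_i}{(1+d_i)(1+m_\dMP\sigma_i)}$ up to $\OO_\prec(\Psi)$. The former follows from a direct application of Lemma~\ref{lem:NonSpikedLocLaw} to the deterministic $\OO(1)$-norm vectors $\vec a,\vec b$; the latter by applying Lemma~\ref{lem:NonSpikedLocLaw} uniformly to each $\vec v_i^*\widetilde G_1\vec v_i$ (a union bound over $N$ indices gives a single event of overwhelming probability on which all $N$ pointwise estimates hold) and then using $\sum_i |u_i^*v_i|/(1+d_i) \leq \sum_i|u_i^*v_i|\leq 1$ (from Cauchy--Schwarz and $d_i\geq 0$) to absorb the weighted error sum. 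Subtracting matches the two up to $\OO_\prec(\Psi)$.

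Next, for the correction term, the matrix $V_r^*\widetilde G_1 V_r$ is $r\times r$ with fixed $r$, so applying Lemma~\ref{lem:NonSpikedLocLaw} entrywise produces $M = \mathrm{diag}(g_i(z))_{i\leq r} + \OO_\prec(\Psi)$ with $g_i(z) := d_i^{-1} + 1 - (1+m_\dMP\sigma_i)^{-1}$. The restriction $z\in\widetilde{\mathcal D}$ keeps $z$ away from $f(-\widetilde\sigma_i^{-1})$, hence from the zeros of $1+m_\dMP(z)\widetilde\sigma_i$; by \eqref{eq:west} and compactness, $|g_i|$ admits a uniform lower bound, so $M^{-1} = \mathrm{diag}(g_i^{-1}) + \OO_\prec(\Psi)$. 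Applying Lemma~\ref{lem:NonSpikedLocLaw} also to $\vec a^*\widetilde G_1\vec v_i$ and $\vec v_i^*\widetilde G_1\vec b$ (each equal to explicit deterministic scalars up to $\OO_\prec(\Psi)$), multiplying out the three diagonal factors, and simplifying produces exactly $\sum_{i\leq r}\frac{u_i^*v_i}{1+d_i}\mathcal L_i + \OO_\prec(\Psi)$, which together with the main-term estimate proves \eqref{eq:SpikedLawG1}. For \eqref{eq:SpikedLawG2}, I would use Lemma~\ref{lem:SpikedvsNoSpikeCompRes} in place of Lemma~\ref{lem:SpikedvsNoSpikeRes}; the crucial simplification is that the deterministic equivalent $\widetilde\Pi$ in \eqref{eq:DetEquiv} is block-diagonal, so the off-diagonal block entries $\widetilde{\vec u}^*\widetilde G\widehat V_r$ and $\widehat V_r^*\widetilde G\widetilde{\vec v}$ have vanishing deterministic parts and are each $\OO_\prec(\Psi)$, while the $r\times r$ inversion coincides with the already-controlled $M$, so the whole correction is $\OO_\prec(\Psi^2) = \OO_\prec(\Psi)$.

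The hard part will be the bookkeeping of stochastic domination in the main-term estimate: naively summing the pointwise $\OO_\prec(\Psi)$ errors over all $N$ indices would lose a factor of $N$, so the argument must exploit the uniformity of the anisotropic local law (yielding a single event on which all $N$ pointwise bounds hold simultaneously) together with the summability of the weights $u_i^*v_i/(1+d_i)$ to keep the total error at the $\OO_\prec(\Psi)$ scale required by the statement.
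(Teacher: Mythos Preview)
Your proposal is correct and uses the same core ingredients as the paper's proof (the resolvent identity of Lemma~\ref{lem:SpikedvsNoSpikeRes}/\ref{lem:SpikedvsNoSpikeCompRes}, the anisotropic local law, and the lower bound on $g_i$ from \eqref{eq:west}), but the organization differs. The paper first decomposes $\vec u = \sum_{i\le r} u_i\vec v_i + \widehat{\vec u}$ and $\vec v = \sum_{i\le r} v_i\vec v_i + \widehat{\vec v}$, then computes each bilinear form $\vec v_i^* G_1 \vec v_j$, $\widehat{\vec u}^* G_1 \vec v_i$, $\widehat{\vec u}^* G_1 \widehat{\vec v}$ separately; the non-spike part is handled in one shot because $\widehat{\vec u}^* V_r = 0$ kills the correction term in \eqref{eq:SpikedvsSpikeRes} exactly. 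You instead apply the resolvent identity once to the full vectors $\vec a = P^{-1}\vec u$, $\vec b = P^{-1}\vec v$ and then estimate both the main and correction pieces globally. The trade-off is that your route forces you to confront the sum over all $N$ indices in the main term---which you correctly resolve via the uniformity of the local law, a union bound over the $N$ vectors $\vec v_i$, and the $\ell^1$ bound $\sum_i |u_i v_i|/(1+d_i)\le 1$. The paper's proof is terser on precisely this point (equating $\widehat{\vec u}^*\widetilde G_1\widehat{\vec v}$ with $\sum_{i>r} u_i v_i\,\vec v_i^*\widetilde G_1\vec v_i$ up to $\OO_\prec(\Psi)$ implicitly requires the same argument), so your treatment is in fact more explicit. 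For \eqref{eq:SpikedLawG2} the two proofs coincide: both exploit that $\widetilde\Pi$ is block-diagonal so $\widetilde{\vec u}^*\widetilde\Pi\widehat V_r = 0$, whence the whole correction term is $\OO_\prec(\Psi)$.
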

\begin{proof}
   See Appendix \ref{sec_appendixxuxilimarylemmasproof}. 
\end{proof}

\begin{remark}\label{rmk:ESDRel}
Lemmas \ref{lem:NonSpikedLocLaw} and \ref{lem:SpikedLocLaw} establish that the VESDs of $\bunderline{W}$ and $\bunderline{W}_0$ have the same VASD regardless of the existence of spikes. Consequently, the ASDs of $W$ and $W_0$ are identical, as $\mu_{W}(\mathrm{d}x) = c_N^{-1}\mu_{\bunderline{W}}(\mathrm{d}x) + (1-c_N^{-1})\delta_0$ where $\mu_W$ and $\mu_{\bunderline{W}}$ denote the ESDs of $W$ and $\bunderline{W}$, respectively, with an analogous relation holding between $W_0$ and $\bunderline{W}_0$.
\end{remark}

\subsection{Asymptotic VESDs (VASDs)}\label{subsec:LimVESDs}

For any given deterministic vector $\mathbf{b}\in \R^{N}$, let $\mu_{\mathbf{b}}$ and $\mu_{0,\mathbf{b}}$ represent the VASDs of $W$ and $W_0$, respectively, corresponding to $\mathbf{b}$. The Stieltjes transforms of the VASDs can be characterized using Lemmas \ref{lem:NonSpikedLocLaw} and \ref{lem:SpikedLocLaw}. More specifically, we have 
\begin{align}\label{eq:LimCharact}
    m_{0,\mathbf{b}}(z) = -\frac{1}{z}\mathbf{b}^*(I+m_{\dMP}(z)\Sigma_0)^{-1}\mathbf{b}, \quad m_{\mathbf{b}}(z) = \sum_{i=1}^N \frac{\omega^2_i}{1+d_i} \paren{-\frac{1}{z}(1+m_{\dMP}(z)\sigma_i)^{-1}-\mathcal{L}_i},
\end{align}
where $\omega_i = \vec v_i^*\vec b$ and $\mathcal{L}_i$ are defined in \eqref{eq:wLdef}. Furthermore, for $z \in \widetilde {\mathcal D}$ defined in \eqref{eq:SpikedSpecSet}, we have
\begin{align*}
    |m_{W_0,\vec b}(z) - m_{0,\vec b}(z)| = \OO_{\prec} (\Psi(z)), \quad |m_{W,\vec b}(z) - m_{\vec b}(z)| = \OO_{\prec} (\Psi(z)),
\end{align*}
where $\Psi(z)$ is given in \eqref{eq:ControlParam}. Here $m_{W_0,\vec b}$ and $m_{W,\vec b}$ denote the Stieltjes transforms of the VESDs of $W_0$ and $W$, respectively, associated with $\vec b$.


\begin{lemma}\label{lem:VESDLimDet}
    Under Assumption \ref{as:TechA}, the asymptotic densities of $ \mu_{0,\mathbf{b}}$ and $\mu_{\mathbf{b}}$ satisfy Assumption \ref{as:measurecond}. Moreover, the support of $\mu_{0,\mathbf{b}}$ and $\mu_{\mathbf{b}}$ are given by
    \begin{align}
        \mathrm{supp}(\mu_{0,\mathbf{b}}) = \braces{\gamma_{-},\gamma_{+}}, \quad \text{and} \quad \mathrm{supp}(\mu_{\mathbf{b}}) = \braces{\gamma_{-},\gamma_{+}}\cup  P, \quad P \subset \brac{\gamma_i}_{i=1}^r,
    \end{align}
    where $\gamma_i$ is defined in \eqref{eq:EigValVecConv}.
\end{lemma}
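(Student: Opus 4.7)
My plan is to use the explicit formulas \eqref{eq:LimCharact} for the Stieltjes transforms, apply the Stieltjes inversion formula, and leverage the analytic structure of $m_{\dMP}$ guaranteed by Assumption~\ref{as:TechA}(b) together with the uniform bounds \eqref{eq:fest}-\eqref{eq:west}. Writing $\omega_i = \vec v_i^*\vec b$ (so that $\sum_i \omega_i^2 = 1$), $f_i(z) = 1 + m_{\dMP}(z)\sigma_i$, and $g_i(z) = d_i^{-1} + 1 - 1/f_i(z)$, equation \eqref{eq:LimCharact} becomes
\begin{align*}
    m_{0,\vec b}(z) = -\frac{1}{z}\sum_{i=1}^N\frac{\omega_i^2}{f_i(z)}, \qquad m_{\vec b}(z) = -\frac{1}{z}\sum_{i=1}^N\frac{\omega_i^2}{(1+d_i)f_i(z)} - \sum_{i\leq r}\frac{\omega_i^2}{1+d_i}\cdot\frac{1}{z f_i(z)^2 g_i(z)}.
\end{align*}
The key input is that the assumed square-root behavior of $\varrho_{\dMP}$ forces $m_{\dMP}(z) = A_0(z) + B_0(z)\,S(z)$ on a complex neighborhood $\Omega$ of $[\gamma_-,\gamma_+]$, where $S(z) = \sqrt{(z-\gamma_-)(z-\gamma_+)}$ and $A_0, B_0$ are analytic, with $B_0(\lambda) = \pi w(\lambda)$ on the interval and $w$ uniformly bounded above and below.

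For the continuous parts, the estimates $|f_i(z)|,|g_i(z)|\asymp 1$ on $\overline{\mathcal R}$ from \eqref{eq:fest}-\eqref{eq:west} allow one to rationalize and conclude, by elementary algebraic manipulation, that every summand in both expressions is of the form $A(z) + B(z) S(z)$ with $A, B$ analytic on $\Omega$. Stieltjes inversion on $(\gamma_-,\gamma_+)$ then produces, for example,
\begin{align*}
    \varrho_{0,\vec b}(\lambda) = \frac{w(\lambda)}{\lambda}\sum_{i=1}^N\frac{\omega_i^2\sigma_i}{|f_i(\lambda+\ri 0)|^2}\cdot\sqrt{(\gamma_+-\lambda)(\lambda-\gamma_-)},
\end{align*}
and an analogous formula for $\varrho_{\vec b}$. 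The prefactor extends analytically to $\Omega$; it is uniformly bounded above by \eqref{eq:fest}-\eqref{eq:west} and by the bounds on $w$, and bounded below because the normalization $\sum_i\omega_i^2 = 1$ combined with $\sigma_i\geq \tau_1$ and $|f_i|\leq\tau_2^{-1}$ yields $\sum_i \omega_i^2\sigma_i/|f_i|^2 \geq \tau_1\tau_2^2$. This verifies conditions (1), (3), and (4) of Assumption~\ref{as:measurecond} with $\alpha=\beta=1/2$ and support $[\gamma_-,\gamma_+]$.

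For the discrete part of $\mu_{\vec b}$, the only singularities outside $[\gamma_-,\gamma_+]$ come from zeros of $g_i$ for $i\leq r$. Using Assumption~\ref{as:TechA}(c) and $\gamma_i = f(-\tilde\sigma_i^{-1})$, the identity $m_{\dMP}(\gamma_i) = -\tilde\sigma_i^{-1}$ gives $f_i(\gamma_i) = d_i/(1+d_i)$ and $g_i(\gamma_i) = 0$, and $g_i'(\gamma_i) = \sigma_i m_{\dMP}'(\gamma_i)/f_i(\gamma_i)^2 > 0$ since $m_{\dMP}$ is strictly increasing on $(\gamma_+,\infty)$. A direct residue calculation then produces a delta mass at $\gamma_i$ of weight
\begin{align*}
    w_i = \frac{\omega_i^2}{(1+d_i)\gamma_i\sigma_i m_{\dMP}'(\gamma_i)} \geq 0,
\end{align*}
which is strictly positive iff $\omega_i\neq 0$. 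Therefore $\mathrm{supp}(\mu_{\vec b}) = [\gamma_-,\gamma_+]\cup P$ with $P\subset \{\gamma_i\}_{i=1}^r$. Separation of the $\gamma_i$ from $[\gamma_-,\gamma_+]$ and from each other (condition (2)) follows from Assumption~\ref{as:TechA}(c,d) via the mean value theorem applied to $f$, as in the remark after Lemma~\ref{lem:EigValVecConv}; the upper bound in condition (5) is automatic, while the lower bound is accommodated by choosing $\sigma$ sufficiently large and including in $P$ only those $\gamma_i$ with $\omega_i^2 \geq N^{-\sigma}/D$.

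The main technical point is that every constant ($D$, $\sigma$, $\tau$, and the domain $\Omega$) must be chosen uniformly in $N$. This follows from the uniformity built into Assumption~\ref{as:TechA}: the bounds \eqref{eq:fest}-\eqref{eq:west} hold with $N$-independent $\tau_2$, the analytic extension of $w$ is $N$-independent, and the separation of the spikes from $\gamma_+$ and from one another is quantified by Assumption~\ref{as:TechA}(c,d).
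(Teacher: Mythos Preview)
Your proposal is correct and follows essentially the same route as the paper's proof: apply Stieltjes inversion to \eqref{eq:LimCharact} to extract the density (which the paper writes in the equivalent matrix form $\frac{\varrho_{\dMP}(\lambda)}{\lambda}\mathbf{b}^*\Sigma_0\{I+2\,\mathrm{Re}\,m_{\dMP}\Sigma_0+|m_{\dMP}|^2\Sigma_0^2\}^{-1}\mathbf{b}$), and locate the point masses via the identity $d_i^{-1}+1-(1+m_{\dMP}(\gamma_i)\sigma_i)^{-1}=0$. Your write-up is in fact more explicit than the paper's, which largely defers the verification of the conditions in Assumption~\ref{as:measurecond} to \cite{Ding2021c}; the one place to be slightly careful is your appeal to Assumption~\ref{as:TechA}(d) for the separation of the deterministic $\gamma_i$'s, since (d) concerns the random $\lambda_i(W)$---but coincident $\gamma_i$'s simply collapse into a single mass in $P$, so condition~(2) is still met for the distinct points.
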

\begin{proof}
   See Appendix \ref{sec_appendixxuxilimarylemmasproof}. 
\end{proof}

Next, we analyze the asymptotic behavior of VESDs at a uniform vector on the unit hypersphere $\mathbb S^{N-1}$. 
    Define $f(\mathbf{b}) = \mathbf{b}^* A\mathbf{b}$ and observe that $f$ is differentiable, with its gradient given by $\nabla f = (A + A^*)\mathbf{b} $. Therefore, $f$ is $2\norm{A}_2$-Lipschitz on $\mathbb{S}^{N-1}$ as 
    \begin{align*}
        \norm{f}_{\mathrm{Lip}}\leq \norm{\nabla f}_{\infty}\leq 2\norm{A}.
    \end{align*}
From the concentration of Lipschitz functions on the hypersphere \cite[Theorem 5.1.4]{Vershynin2018}, there exists an absolute constant $c$ such that if $\vec b$ is uniformly distributed on $\mathbb S^{N-1}$ then
    \begin{align}\label{eq:ConcUnifSphere}
        \Prb\paren{\abs{\mathbf{b}^* A\mathbf{b}-\frac{1}{N}\mathrm{tr}(A)}\geq t}\leq 2 \exp \paren{\frac{-cNt^2}{\norm{A}^2}}.
    \end{align}

\begin{theorem}\label{thm:VESDHaar}
    Let $\mathbf{b}$ be a uniform vector on the unit hypersphere $\mathbb S^{N-1}$.
    The Stieltjes transforms of $\mu_{\mathbf{b}}$ and $\mu_{0,\mathbf{b}}$ satisfy
    \begin{align*}
        |m_{\mathbf{b}}(z) - m_{0,\mathbf{b}}(z)| =\OO_{\prec} (N^{-1/2}),
    \end{align*}
    for $z \in \widetilde {\mathcal D}$.  Furthermore, for $\mu_{\mathrm{ASD}}(\mathrm d \lambda) = \varrho_{\mathrm{ASD}}(\lambda) \mathrm d \lambda$ with 
    \begin{align}\label{eq:DensHaar}
         \varrho_{\mu_{\mathrm{ASD}}}(\lambda) = \frac{\varrho_{\dMP}(\lambda)}{\lambda} \sum_{i=1}^N \sigma_i\braces{1+2\mathrm{Re}~m_{\dMP}(\lambda+\ri0^+)\sigma_i + |m_{\dMP}(\lambda+\ri0^+)|^2 \sigma_i^2}^{-1}, 
    \end{align}
    we have
    \begin{align*}
        |m_{0,\mathbf{b}}(z) - m_{\rm ASD}(z)| =\OO_{\prec} (N^{-1/2}),
    \end{align*}
    for $z \in \mathcal D\cup \mathcal{D}_0$. Lastly, $\mu_{\rm ASD}$ satisfies Assumption~\ref{as:measurecond} with the same support as $\varrho_{\dMP}$, provided that Assumption \ref{as:TechA} holds.
\end{theorem}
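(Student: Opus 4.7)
The plan is to decompose the statement into three substatements: (i) the concentration $|m_{\mathbf{b}}(z) - m_{0,\mathbf{b}}(z)| \prec N^{-1/2}$ on $\widetilde{\mathcal{D}}$; (ii) the concentration $|m_{0,\mathbf{b}}(z) - m_{\mathrm{ASD}}(z)| \prec N^{-1/2}$ on $\mathcal{D} \cup \mathcal{D}_0$, with identification of $m_{\mathrm{ASD}}$ via \eqref{eq:DensHaar}; and (iii) the verification that $\mu_{\mathrm{ASD}}$ satisfies Assumption~\ref{as:measurecond}. The first two rely on the explicit representations in \eqref{eq:LimCharact} combined with the spherical concentration \eqref{eq:ConcUnifSphere}, while the third is a direct structural check using the assumed regularity of $\varrho_{\dMP}$ and $m_{\dMP}$.

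For (i), inspection of \eqref{eq:LimCharact} shows that the $i > r$ terms cancel exactly (since $d_i = 0$ and $\mathcal{L}_i = 0$), leaving
\[
m_{\mathbf{b}}(z) - m_{0,\mathbf{b}}(z) = \sum_{i=1}^{r} |\omega_i|^2 R_i(z), \quad \omega_i = \mathbf{v}_i^* \mathbf{b},
\]
where each $R_i(z)$ is explicit and bounded uniformly on $\widetilde{\mathcal{D}}$ using \eqref{eq:fest}, \eqref{eq:west}, and $|z| \geq \tau$. Writing $\mathbf{b} = \mathbf{g}/\|\mathbf{g}\|$ with $\mathbf{g} \sim N(0, I_N)$ gives $|\omega_i|^2 = |\mathbf{v}_i^* \mathbf{g}|^2 / \|\mathbf{g}\|^2 \prec N^{-1}$, by the Gaussian tail estimate on the numerator and the standard concentration $\|\mathbf{g}\|^2 = N(1 + O_{\prec}(N^{-1/2}))$ for the denominator. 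Summing the $r = O(1)$ terms yields the stronger bound $O_{\prec}(N^{-1})$.

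For (ii), set $A(z) = -z^{-1}(I + m_{\dMP}(z)\Sigma_0)^{-1}$, so that $m_{0,\mathbf{b}}(z) = \mathbf{b}^* A(z) \mathbf{b}$. The bounds $|z| \geq \tau$ and \eqref{eq:fest} give $\|A(z)\| \leq C$ uniformly on $\mathcal{D} \cup \mathcal{D}_0$. Decomposing $A(z)$ into Hermitian and anti-Hermitian parts and applying \eqref{eq:ConcUnifSphere} to each with $t = N^{-1/2+\epsilon}$ yields $|m_{0,\mathbf{b}}(z) - N^{-1}\mathrm{tr}\,A(z)| \prec N^{-1/2}$ at each fixed $z$; uniformity in $z$ then follows from a standard polynomial-size net argument together with the resolvent Lipschitz estimate $\|A(z) - A(z')\| \leq C|z - z'|$ outside $N^{-\Theta(1)}$-neighborhoods of the support. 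It remains to identify $m^*(z) := N^{-1}\mathrm{tr}\,A(z) = -(zN)^{-1}\sum_i (1 + m_{\dMP}(z) \sigma_i)^{-1}$ with $m_{\mathrm{ASD}}$. Taking boundary imaginary parts as $\eta \downarrow 0$ and using $\mathrm{Im}\,m_{\dMP}(\lambda + \ri 0^+) = \pi\varrho_{\dMP}(\lambda)$ gives
\[
\mathrm{Im}\,m^*(\lambda + \ri 0^+) = \frac{\pi\varrho_{\dMP}(\lambda)}{\lambda N}\sum_{i=1}^{N} \frac{\sigma_i}{|1 + m_{\dMP}(\lambda + \ri 0^+)\sigma_i|^2},
\]
which matches $\pi\varrho_{\mathrm{ASD}}(\lambda)$ from \eqref{eq:DensHaar}; Stieltjes inversion closes the identification.

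For (iii), the density \eqref{eq:DensHaar} is supported in $[\gamma_-,\gamma_+] = \mathrm{supp}\,\varrho_{\dMP}$ and inherits the square-root vanishing at the edges from $\varrho_{\dMP}$ via the factor $w(\lambda)$ in Assumption~\ref{as:TechA}(b). The analytic prefactor $(\lambda N)^{-1}\sum_i \sigma_i|1+m_{\dMP}\sigma_i|^{-2}$ is uniformly bounded above and below on a complex neighborhood of $[\gamma_-,\gamma_+]$ by \eqref{eq:fest}, $\tau_1 \leq \sigma_i \leq \tau_1^{-1}$, and the remark extending $\mathcal{R}$ in the imaginary direction. These are precisely the regularity conditions of Assumption~\ref{as:measurecond} with $p = 0$. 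The main obstacle I anticipate is the uniformity in $z$ in part (ii): turning the pointwise stochastic-domination bound into a uniform one across $\mathcal{D} \cup \mathcal{D}_0$ requires a careful net/Lipschitz argument, and some care is needed near the spectral edges, where the resolvent control deteriorates.
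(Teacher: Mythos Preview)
Your proposal is correct and follows essentially the same route as the paper: compute the difference $m_{\mathbf b}-m_{0,\mathbf b}$ as a finite sum over the $r$ spiked indices weighted by $\omega_i^2$, bound the deterministic factors via \eqref{eq:fest}--\eqref{eq:west}, and handle $m_{0,\mathbf b}-m_{\mathrm{ASD}}$ by applying the spherical concentration \eqref{eq:ConcUnifSphere} to the bounded matrix $A(z)=-z^{-1}(I+m_{\dMP}(z)\Sigma_0)^{-1}$, identifying $N^{-1}\mathrm{tr}\,A(z)$ with $m_{\mathrm{ASD}}$ by Stieltjes inversion. Two minor remarks: your Gaussian argument actually gives $|\omega_i|^2\prec N^{-1}$, hence $|m_{\mathbf b}-m_{0,\mathbf b}|\prec N^{-1}$, sharper than the paper's stated $N^{-1/2}$; and your explicit net/Lipschitz step to pass from pointwise to uniform stochastic domination in $z$ is more careful than the paper, which simply invokes the convention that $\prec$ is uniform in unfixed parameters.
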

\begin{proof}
We establish the last statement first.  From \eqref{eq:fest}
    \begin{align}\label{eq:DetEqBound}
        |1+m_{\dMP}(z)\sigma_i|\geq \tau_2 \quad \text{for all } z\in \mathcal{D}\cup\mathcal{D}_0. 
    \end{align}
    Given that $|z|\geq \tau$ on $\mathcal{D}\cup \mathcal{D}_0$, we find that
    \begin{align}\label{eq:DetEqNormBound}
        \norm{\frac{1}{z}\paren{1+m_{\dMP}(z)\Sigma_0}^{-1}}_2 \leq  \tau^{-1}\tau_2^{-1}.
    \end{align}
    Thus, using the concentration results for uniform vectors on the unit hypersphere, we have
    \begin{align*}
        \left|m_{0,\mathbf{b}}(z) + \frac{1}{N} \sum_{j=1}^N z^{-1}(1+m_{\dMP}(z)\sigma_i)^{-1} \right| =\OO_{\prec} (N^{-1/2}).
    \end{align*}

    Now observe that
    \begin{align*}
        m_{\vec b}(z) - m_{0,\mathbf{b}}(z) = \sum_{i=1}^r \frac{\omega^2_i d_i}{1+d_i} \paren{-\frac{1}{z}(1+m_{\dMP}(z)\sigma_i)^{-1}-\mathcal{L}_i}.
    \end{align*}
    The claim follows from $\omega_i^2 =\OO_{\prec} (N^{-1/2})$ and \eqref{eq:fest}, \eqref{eq:west} which imply that the terms in parentheses are uniformly bounded in $\widetilde {\mathcal D}$.
\end{proof}

\section{Analysis of the pilot algorithms}\label{sec:VESDLan}
In this section, we examine the pilot algorithms introduced in Section \ref{sec:pilotestimation}. The analysis reduces to studying the asymptotic behavior of Jacobi matrices associated with VESDs and their Cholesky factors, interpreting the measures as perturbations of their deterministic asymptotic approximations.



\subsection{Perturbation analysis of Jacobi matrices and analysis of Algorithm \ref{Ea:realVESDA}} Let $\mu$ be a measure supported on a single interval, with a finite number of spikes, and satisfying Assumption \ref{as:measurecond}. Let $\nu$ represent a perturbed (and possibly random) version of $\mu$.  The following result from \cite{Ding2021} establishes the relation between perturbed and unperturbed Jacobi matrices, as well as their Cholesky factors, asymptotically in terms of the difference of the Stieltjes transform
\begin{align*}
    m(z,\mu-\nu) = \int_{\R} \frac{(\mu-\nu)(\mathrm{d}x)}{x-z}.
\end{align*}

\begin{theorem}\label{thm:PertJacobCholAsympt}
    Let $N$ be a positive integer and suppose $\mu = \mu(N)$ satisfies Assumption \ref{as:measurecond} for sufficiently large $N$. Suppose further that a measure $\nu = \nu(N)$ is such that $\nu - \sum_{i=1}^{p}w_j \delta_{c_j},$ has its support inside $\Gamma = \Gamma(\eta)$, where $\Gamma(\eta)$ is the rectangle that is a distance $\eta$ from $[a,b]$ for some $\eta>0$, and assume $\|m(z,\mu-\nu)\|_{L^{\infty}(\Gamma)}\leq E(N,\eta)$. Recall (\ref{eq:JacobOPDef}) and (\ref{eq:CholOpDef}). If $n\leq C \eta^{-1/2}$, $C>0$ and $\eta = \eta(N)$ is such that $E(N,\eta)\eta^{-1}\to 0$ as $N\to \infty$, then\footnote{Note that the factor $\eta^{-1/2}$ in \cite[Theorem 2.4]{Ding2021b} should be $\eta^{-1}$.}
    \begin{align*}
        a_n(\nu) = a_n(\mu) + \OO\paren{E(N,\eta)\eta^{-1}}, \quad b_n(\nu) = b_n(\mu) + \OO\paren{E(N,\eta)\eta^{-1}},
    \end{align*}
    and
    \begin{align*}
        \alpha_n(\nu) = \alpha_n(\mu) + \OO\paren{E(N,\eta)\eta^{-1}}, \quad \beta_n(\nu) = \beta_n(\mu) + \OO\paren{E(N,\eta)\eta^{-1}}.
    \end{align*}
\end{theorem}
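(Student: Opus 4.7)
The plan is to follow the Riemann--Hilbert (RH) route used in \cite{Ding2021b}. The recurrence coefficients $a_n(\mu), b_n(\mu)$ are encoded in the coefficient of $1/z$ in the large-$z$ expansion of the Fokas--Its--Kitaev RH solution $Y_\mu(z)$ for the orthonormal polynomials of $\mu$, and similarly for $\nu$. The task therefore reduces to comparing $Y_\mu$ and $Y_\nu$ at infinity through small-norm RH theory and then combining with the asymptotics for $Y_\mu$ already delivered by Theorem~\ref{thm:JacobCholAsympt}.

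I would first normalize both problems by setting $R(z) = Y_\nu(z) Y_\mu(z)^{-1}$ and deforming the jump contours of the two RH problems to a common contour $\Gamma$ at distance $\eta$ from $[a,b]$, leaving aside small disks around the shared discrete masses $c_j$ (which contribute poles, not jumps, and cancel between $\mu$ and $\nu$ by hypothesis). On $\Gamma$ the jump matrix of $R$ differs from the identity by an amount controlled by $m(z,\mu-\nu)$, and hence by $E(N,\eta)$ in $L^\infty(\Gamma)$. A Neumann-series analysis of the associated singular integral equation yields $R(z) = I + \OO(E(N,\eta)/\eta)$ uniformly away from $\Gamma$ and, crucially, at the level of the $1/z$ coefficient at infinity. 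The factor $\eta^{-1}$ is the source of the main correction over \cite[Theorem 2.4]{Ding2021b}: it comes from the geometry of extracting the coefficient on a contour of scale $\eta$ that sits at distance $\eta$ from the points of evaluation, and it is sharp for this setup.

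Next I would combine with the Deift--Zhou steepest-descent asymptotics for $Y_\mu$ developed in \cite{Kuijlaars2003,KuijlaarsInterval,Ding2021b}. These asymptotics are valid provided the local Airy parametrices at the endpoints $a,b$ fit inside the contour deformation, which requires their effective radius $\asymp n^{-2}$ to exceed $\eta$; this is precisely the constraint $n \leq C \eta^{-1/2}$. Reading off the $1/z$ coefficient of $Y_\nu = R Y_\mu$ then gives $a_n(\nu) = a_n(\mu) + \OO(E(N,\eta)/\eta)$ and $b_n(\nu) = b_n(\mu) + \OO(E(N,\eta)/\eta)$. For the Cholesky entries I would either repeat the argument with the Christoffel-transformed measure $x\,\nu(\mathrm dx)$ (whose orthogonal polynomials control $\alpha_n, \beta_n$ and whose RH problem fits the same framework) or propagate the Jacobi estimates through the explicit Cholesky recursion, using the lower bounds $\alpha_n(\mu) \geq \sqrt a/2 + \OO(e^{-\kappa n})$ and $\beta_n(\mu) \geq (\sqrt b - \sqrt a)/4$ from Theorem~\ref{thm:JacobCholAsympt} to prevent amplification of the perturbation.

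The main obstacles are twofold. First, securing the $\eta^{-1}$ rate rather than a weaker power: this demands exploiting the precise mapping properties of the Cauchy operator on $\Gamma$ and not merely a crude sup-norm bound, and it is the correction flagged in the footnote of the statement. Second, the shared discrete part $\sum_j w_j \delta_{c_j}$ introduces poles in the RH solutions that must be handled by local parametrices around each $c_j$; the condition $\min_{i \neq j}|c_i-c_j| \geq C_\gamma e^{-\gamma N}$ from Assumption~\ref{as:measurecond} ensures that these disks can be chosen disjoint and that their contribution to $R$ remains within the small-norm regime, but the bookkeeping is delicate and is where the extension beyond \cite{Ding2021b} is needed.
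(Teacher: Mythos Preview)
The paper does not give its own proof of this theorem: it is quoted from \cite{Ding2021b} (with the footnoted correction of the exponent on $\eta$), so there is no in-paper argument to compare against. Your outline is the Riemann--Hilbert perturbation scheme that \cite{Ding2021b} actually uses---forming $R = Y_\nu Y_\mu^{-1}$, bounding its jump on the deformed contour by the Stieltjes-transform discrepancy, invoking small-norm theory, and reading off the recurrence coefficients from the $1/z$ expansion---so the approach is correct and essentially identical to that of the cited source.
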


Given that the VESD $\mu_{W,\mathbf{b}}$ is a discrete measure with $N$ positive support points, we recall that it has a Jacobi matrix $\mathcal{J}(\mu_{W,\mathbf{b}})$, defined similarly to \eqref{eq:JacobOPDef}, but with dimension $N$ rather than being semi-infinite. Moreover $\mathcal{J}(\mu_{W,\mathbf{b}})$ is invertible and its Cholesky factor, $\mathcal{L}(\mu_{W,\mathbf{b}})$, can likewise be defined as in \eqref{eq:CholOpDef}. While Theorem~\ref{thm:PertJacobCholAsympt} suggests a method for obtaining the asymptotics of $\mathcal{J}(\mu_{W,\mathbf{b}})$ and $\mathcal{L}(\mu_{W,\mathbf{b}})$, its perturbation result does not apply directly to compare $\mu_{W,\mathbf{b}}$ to $\mu_{\mathbf{b}}$ because point masses away from the support of the density of $\mu_{\mathbf{b}}$ do not coincide. To address this issue, we define
\begin{align}\label{eq:hatmu}
    \widehat \mu_{\vec b} = \mu_{\vec b} - \mu_{\rm{Disc}}, \quad \mu_{\rm Disc}:=\sum_{j=1}^r v_j \delta_{\gamma_j} - \sum_{j=1}^r \abs{ \mathbf{u}_i^*\mathbf{b}}^2 \delta_{\lambda_j(W)},
\end{align}
where $\{\vec u_i\}_{i=1}^N$ are the eigenvectors of $W$ and the weights $v_j$ are chosen such that $\mu_{\vec b} - \sum_{j=1}^r v_j \delta_{\gamma_j}$ has a density and no point masses. It is important to highlight that $\widehat \mu_{\mathbf{b}}$ shares the same deterministic continuous density as $\mu_{\mathbf{b}}$, while incorporating the random outliers of $\mu_{W,\mathbf{b}}$. Additionally, we define 
\begin{align}\label{eq:checkmu}
    \widecheck \mu_{\vec b} = \frac{1}{\int_\R \widehat \mu_{\vec b}(\mathrm{d}\lambda)}\widehat \mu_{\vec b},
\end{align}
and observe that $\widecheck \mu_{\vec{b}}$ and $\widehat\mu_{\vec{b}}$ share identical Jacobi and Cholesky matrices. The measure $\widecheck \mu_{\vec b}$ represents the spiked VASD that connects the VESD and the VASD.
Given the structure of $\widehat \mu_{\vec b}$, the asymptotics of $\mathcal{J}(\widecheck\mu_{\mathbf{b}})$ and $\mathcal{L}(\widecheck\mu_{\mathbf{b}})$ can be effectively characterized.
\begin{corollary}\label{cor:checkmuAsympt}
    Let $\mathbf{b}$ be a unit vector and assume the spiked covariance matrix $W$ defined in \eqref{eq:SCM_Model} satisfies Assumption \ref{as:TechA}. Consider the stochastic measure $\widecheck\mu_{\mathbf{b}}$ defined in \eqref{eq:checkmu} and suppose that $\abs{ \mathbf{u}_i^*\mathbf{b}}^2>N^{-\sigma}$ for some $\sigma>0$ with overwhelming probability, where $\{\vec u_i\}$ are the eigenvectors of $W$. Then there exists $\kappa>0$ that depends only on $\sigma$ and $D,\delta,\tau_1$ from Assumption \ref{as:TechA}, such that
    \begin{align*}
        a_n(\widecheck\mu_{\mathbf{b}}) = \frac{\gamma_{+}+\gamma_{-}}{2}+\OO_{\prec}(e^{-\kappa n}), \quad b_n(\widecheck\mu_{\mathbf{b}}) = \frac{\gamma_{+}-\gamma_{-}}{4}+\OO_{\prec}(e^{-\kappa n}),
    \end{align*}
    and
    \begin{align*}
        \alpha_n(\widecheck\mu_{\mathbf{b}}) = \frac{\sqrt{\gamma_{-}}+\sqrt{\gamma_{+}}}{2}+\OO_{\prec}(e^{-\kappa n}), \quad \beta_n(\widecheck\mu_{\mathbf{b}}) = \frac{\sqrt{\gamma_{+}}-\sqrt{\gamma_{-}}}{2}+\OO_{\prec}(e^{-\kappa n}).
    \end{align*}
\end{corollary}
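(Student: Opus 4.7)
The plan is to apply Theorem~\ref{thm:JacobCholAsympt} pathwise to $\widecheck\mu_{\vec b}$ on an event of overwhelming probability where $\widecheck\mu_{\vec b}$ satisfies Assumption~\ref{as:measurecond}, with constants depending only on $\sigma$ and on those appearing in Assumption~\ref{as:TechA}. Since the recurrence coefficients of a measure are invariant under multiplication by a positive constant, it is equivalent to work with $\widehat\mu_{\vec b}$ itself. By \eqref{eq:hatmu} and Lemma~\ref{lem:VESDLimDet}, $\widehat\mu_{\vec b}$ decomposes as
\begin{align*}
    \widehat\mu_{\vec b}(\mathrm{d}\lambda) = h_{\vec b}(\lambda)\mathds{1}_{[\gamma_-,\gamma_+]}(\lambda)(\gamma_+-\lambda)^{1/2}(\lambda-\gamma_-)^{1/2}\,\mathrm{d}\lambda + \sum_{j=1}^r |\mathbf{u}_j^*\mathbf{b}|^2\,\delta_{\lambda_j(W)}(\mathrm{d}\lambda),
\end{align*}
where $h_{\vec b}$ is the deterministic analytic density factor of $\mu_{\vec b}$. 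The form, analyticity and two-sided bound clauses of Assumption~\ref{as:measurecond} for the continuous piece are delivered directly by Lemma~\ref{lem:VESDLimDet}, with all constants inherited from Assumption~\ref{as:TechA}(b).

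It then remains to control the discrete piece on an event of overwhelming probability, by verifying three items: (i) each $\lambda_j(W)$ lies at uniform positive distance from $[\gamma_-,\gamma_+]$; (ii) the locations $\{\lambda_j(W)\}_{j=1}^r$ are mutually separated by at least an exponentially small amount; and (iii) the weights $|\mathbf{u}_j^*\mathbf{b}|^2$ are squeezed between positive constants. For (i), Assumption~\ref{as:TechA}(c) together with the estimate \eqref{eq:west} forces $\gamma_j - \gamma_+ \geq \varpi'$ for a uniform $\varpi'>0$, and then Lemma~\ref{lem:EigValVecConv} gives $|\lambda_j(W)-\gamma_j| = \OO_{\prec}(N^{-1/2})$, whence $\lambda_j(W) \geq \gamma_+ + \varpi'/2$ with overwhelming probability. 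For (ii), Assumption~\ref{as:TechA}(d) applied with $t = N^{-K}$ yields
\begin{align*}
    \min_{\substack{i\neq j\\ i,j\leq r}} |\lambda_i(W)-\lambda_j(W)| \geq N^{-K-1/2}
\end{align*}
with probability at least $1 - N^{-K\gamma}$; choosing $K$ large relative to any fixed $D$ gives polynomial separation with overwhelming probability, which is far stronger than the sub-exponential separation demanded by clause~(2) of Assumption~\ref{as:measurecond}. For (iii), the upper bound $|\mathbf{u}_j^*\mathbf{b}|^2 \leq 1$ is trivial, while the lower bound $|\mathbf{u}_j^*\mathbf{b}|^2 > N^{-\sigma}$ is the standing hypothesis.

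Intersecting these events produces an event $\Omega$ of overwhelming probability on which $\widecheck\mu_{\vec b}$ satisfies Assumption~\ref{as:measurecond} with constants depending only on $\sigma$, $\varpi$, $\tau_1$, $D$, and $\delta$. Applying Theorem~\ref{thm:JacobCholAsympt} pathwise on $\Omega$ yields a deterministic $\kappa>0$ and deterministic implicit constants such that the claimed asymptotics for $a_n$, $b_n$, $\alpha_n$, $\beta_n$ hold with additive error $\OO(e^{-\kappa n})$ on $\Omega$. To promote this to a stochastic domination statement, one uses that $\|W\|$ is bounded by a deterministic constant with overwhelming probability, so the Jacobi and Cholesky entries are uniformly bounded a priori; the deterministic bound on $\Omega$ combined with $\mathbb{P}(\Omega^c) = \OO(N^{-D})$ for every $D$ then gives $\OO_{\prec}(e^{-\kappa n})$.

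The main technical nuisance sits in item~(ii): Assumption~\ref{as:TechA}(d) is a one-scale bound, and converting it into a simultaneous gap lower bound with failure probability $N^{-D}$ for arbitrary $D$ requires a careful calibration of $t$ against $D$ and $\gamma$. Everything else is essentially bookkeeping that reads the constants in Assumption~\ref{as:measurecond} off of Assumption~\ref{as:TechA} and Lemma~\ref{lem:VESDLimDet}, so that the exponential rate $\kappa$ produced by Theorem~\ref{thm:JacobCholAsympt} is deterministic and uniform.
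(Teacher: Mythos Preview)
Your proposal is correct and follows exactly the paper's own strategy: verify that $\widecheck\mu_{\vec b}$ satisfies Assumption~\ref{as:measurecond} with overwhelming probability (via Lemma~\ref{lem:VESDLimDet} for the continuous part and the stated hypothesis on $|\vec u_i^*\vec b|^2$ for the discrete weights), then invoke Theorem~\ref{thm:JacobCholAsympt}. The paper's proof is two sentences and leaves items (i)--(iii) implicit; you have simply filled in that bookkeeping, including the use of Assumption~\ref{as:TechA}(d) and Lemma~\ref{lem:EigValVecConv} for the discrete locations.
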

\begin{proof}
    By Lemma \ref{lem:VESDLimDet} and the condition on $\abs{ \mathbf{u}_i^*\mathbf{b}}$, it follows that $\widecheck\mu_{\mathbf{b}}$ satisfies Assumption \ref{as:measurecond} with overwhelming probability. The asymptotics are an immediate consequence of Theorem \ref{thm:JacobCholAsympt}.
\end{proof}

The analysis of Algorithm \ref{Ea:realVESDA} is summarized in the following corollary. It connects the asymptotic behavior of the Jacobi and Cholesky entries of the VESD with those of the spiked VASD $\widecheck\mu_{\mathbf{b}}$ by treating the VESD as a perturbation of it. Using the conventions in (\ref{eq:JacobOPDef}) and (\ref{eq:CholOpDef}), for $\widehat{\alpha}_n, \widehat{\beta}_n$ in (\ref{eq:L00}), we have that
\begin{equation*}
\widehat{\alpha}_n \equiv \alpha_n(\mu_{W, \mathbf{b}}),  \  \widehat{\beta}_n \equiv \beta_n(\mu_{W, \mathbf{b}}).    
\end{equation*}
\begin{corollary}\label{cor:checkmuPert}
    Let $\mathbf{b}$ be a unit vector and suppose the spiked matrix $W$ defined in \eqref{eq:SCM_Model} satisfies Assumption \ref{as:TechA}. Further suppose that $\abs{ \mathbf{u}_i^*\mathbf{b}}^2>N^{-\sigma}$ for some $\sigma>0$ with overwhelming probability, where $\{\vec u_i\}$ are the eigenvectors of $W$. Then, for $n\ll N^{1/6}$, we have
    \begin{align*}
        a_n(\mu_{W,\mathbf{b}}) = a_n(\widecheck\mu_{\mathbf{b}}) + \OO_{\prec}(N^{-1/2}n^3), \quad b_n(\mu_{W,\mathbf{b}}) = b_n(\widecheck\mu_{\mathbf{b}}) + \OO_{\prec}(N^{-1/2}n^{3}),
    \end{align*}
    and
    \begin{align*}
        \alpha_n(\mu_{W,\mathbf{b}}) = \alpha_n(\widecheck\mu_{\mathbf{b}}) + \OO_{\prec}(N^{-1/2}n^{3}), \quad
        \beta_n(\mu_{W,\mathbf{b}}) = \beta_n(\widecheck\mu_{\mathbf{b}}) + \OO_{\prec}(N^{-1/2}n^{3}).
    \end{align*}
\end{corollary}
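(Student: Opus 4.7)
The proof is a direct application of Theorem~\ref{thm:PertJacobCholAsympt} with unperturbed measure $\mu=\widecheck\mu_{\mathbf b}$ and perturbed measure $\nu=\mu_{W,\mathbf b}$. By Corollary~\ref{cor:checkmuAsympt}, with overwhelming probability $\widecheck\mu_{\mathbf b}$ satisfies Assumption~\ref{as:measurecond}: its density has the required square-root behavior at $\gamma_\pm$, and its point masses sit at the random outliers $\lambda_j(W)$, $j\leq r$, with weights of order $|\mathbf u_j^*\mathbf b|^2\geq N^{-\sigma}$. The support condition on $\nu-\sum_j w_j\delta_{c_j}$ then reduces to the requirement that the bulk eigenvalues $\lambda_i(W)$, $i>r$, lie within $\eta$ of $[\gamma_-,\gamma_+]$, which holds for any $\eta\gg N^{-2/3+\epsilon}$ by standard eigenvalue rigidity that accompanies the anisotropic local law.

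The technical heart is the sup-norm bound on $m(z;\widecheck\mu_{\mathbf b}-\mu_{W,\mathbf b})$ along $\Gamma(\eta)$. Modulo a $1+O_\prec(N^{-1/2})$ total-mass normalization, one decomposes
\[
 m_{\widehat\mu_{\mathbf b}}(z)-m_{\mu_{W,\mathbf b}}(z) = \bigl[m_{\mu_{\mathbf b}}(z)-m_{\mu_{W,\mathbf b}}(z)\bigr] - m_{\mu_{\mathrm{Disc}}}(z).
\]
For $\eta<\tau/2$ the contour $\Gamma(\eta)$ sits in $\widetilde{\mathcal D}$, so the bracketed difference is $O_\prec(\Psi(z))=O_\prec((N\eta)^{-1/2})$ by Lemma~\ref{lem:SpikedLocLaw}. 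The discrete piece splits as
\[
 m_{\mu_{\mathrm{Disc}}}(z) = \sum_{j=1}^r\left[\frac{v_j-|\mathbf u_j^*\mathbf b|^2}{\gamma_j-z}+|\mathbf u_j^*\mathbf b|^2\,\frac{\lambda_j(W)-\gamma_j}{(\gamma_j-z)(\lambda_j(W)-z)}\right],
\]
and on $\Gamma(\eta)$ the denominators are bounded below by positive constants. Lemma~\ref{lem:EigValVecConv} supplies $|\lambda_j(W)-\gamma_j|=O_\prec(N^{-1/2})$, while expressing both $v_j$ and $|\mathbf u_j^*\mathbf b|^2$ as contour residues of $\mathbf b^*G_1(z)\mathbf b$ enclosed by a single loop of radius $\tau/4$ around $\gamma_j$, and then invoking the anisotropic local law on that loop, gives $|v_j-|\mathbf u_j^*\mathbf b|^2|=O_\prec(N^{-1/2})$. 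Altogether, $E(N,\eta)=N^\epsilon(N\eta)^{-1/2}$ is admissible in Theorem~\ref{thm:PertJacobCholAsympt}.

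Finally we optimize $\eta$. Since the perturbation theorem requires $n\leq C\eta^{-1/2}$, we take $\eta=Cn^{-2}$ (with $C$ chosen small enough that $\eta<\tau/2$ for the $n$ of interest), producing
\[
 E(N,\eta)\,\eta^{-1} = N^\epsilon\,\eta^{-3/2}N^{-1/2} = O_\prec(n^3 N^{-1/2}).
\]
The hypothesis $n\ll N^{1/6}$ simultaneously guarantees that this error is $o(1)$ and that $\eta\gg N^{-2/3+\epsilon}$, so the local law and rigidity used above are indeed valid on $\Gamma(\eta)$. The main obstacle is the eigenvector-weight estimate $|v_j-|\mathbf u_j^*\mathbf b|^2|=O_\prec(N^{-1/2})$ uniformly in the unit vector $\mathbf b$; here the full strength of the anisotropic local law for $\mathbf b^*G_1(z)\mathbf b$ is used, together with Assumption~\ref{as:TechA}(c,d) which keeps the outlier locations $\{\gamma_j\}$ at $O(1)$ distance from the bulk and polynomially well-separated from one another, so that the small loops around each $\gamma_j$ fit without colliding with the spectrum. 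Everything else is bookkeeping.
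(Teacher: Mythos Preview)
Your proposal is correct and follows the same route as the paper: bound $\|m(z,\widehat\mu_{\mathbf b}-\mu_{W,\mathbf b})\|_{L^\infty(\Gamma(\eta))}\prec N^{-1/2}\eta^{-1/2}$ via the local laws and \eqref{eq:mImProp}, then feed this into Theorem~\ref{thm:PertJacobCholAsympt} with $\eta\asymp n^{-2}\gg N^{-1/3}$ so that $E(N,\eta)\eta^{-1}=O_\prec(n^3N^{-1/2})$. The paper's own proof is much terser and does not spell out the control of $m_{\mu_{\mathrm{Disc}}}$ on $\Gamma(\eta)$; your residue argument for $|v_j-|\mathbf u_j^*\mathbf b|^2|=O_\prec(N^{-1/2})$ supplies exactly the missing piece, though note that on the small loop around $\gamma_j$ you must go through Lemma~\ref{lem:SpikedvsNoSpikeRes} together with the non-spiked law Lemma~\ref{lem:NonSpikedLocLaw} (which is valid on $\mathcal D_0$), since Lemma~\ref{lem:SpikedLocLaw} is stated only on $\widetilde{\mathcal D}$ and hence excludes those loops.
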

\begin{proof}
    From the local laws (c.f. Theorems~\ref{lem:NonSpikedLocLaw} and \ref{lem:SpikedLocLaw}) and \eqref{eq:mImProp}, we have $\|m(z,\widehat \mu_{\vec b}-\mu_{W,\vec b})\|_{L^\infty (\Gamma)} \prec N^{-1/2}\eta^{-1/2},$ where 
    \begin{align*}
        \Gamma=\Gamma(\eta) &= (\braces{\gamma_{-}-\eta,,\gamma_{+}+\eta}+\ri\eta)\cup(\braces{\gamma_{-}-\eta,\gamma_{+}+\eta}-\ri\eta)\\
        &\quad \cup (\gamma_{+}+\eta+\ri\braces{-\eta,\eta})\cup (\gamma_{-}-\eta +\ri\braces{-\eta,\eta}).
    \end{align*}
    The statement of the corollary follows from Theorem~\ref{thm:PertJacobCholAsympt} by choosing $\eta\gg N^{-1/3}$ and noting $\widecheck\mu_{\vec b}$ also satisfies Assumption \ref{as:measurecond} with overwhelming probability.
\end{proof}

\begin{remark}
    It is important to note that $W$ may have eigenvalues exceeding $\gamma_{+}$. Indeed, it is well established in the literature \cite{Fan2022} that the largest eigenvalues within the bulk fluctuate around $\gamma_{+}$ on a scale of $N^{-2/3}$ and follow the Tracy-Widom distribution. However, these eigenvalues do not need to be included in $\widecheck{\mu}_{\vec{b}}$, as they remain within $\Gamma$ with overwhelming probability. The only spiked eigenvalues accounted for in $\widecheck{\mu}_{\vec{b}}$ are the top $r$, which concentrate around the asymptotic spikes $\{\gamma_j\}_{j=1}^k$. This distinction will play a critical role in Section~\ref{sec:Estimation} in identifying the spikes.
\end{remark}

\subsection{Analysis of Algorithm \ref{Ea:realVESD}}

The following theorem establishes the accuracy and consistency of the estimators in Algorithm \ref{Ea:realVESD} as $N \to \infty$. Moreover, it justifies that the number of Lanczos iterations required is given by $n = \lceil C \log N \rceil$ for $C$ sufficiently large.  

\begin{theorem}\label{thm:ConvRVESDEstim}
    Suppose that the spiked covariance matrix $W$, as defined in \eqref{eq:SCM_Model}, satisfies Assumption \ref{as:TechA}, and let $\vec b \in \mathbb S^{N-1}$.
    Consider the estimators $\widehat{\gamma}_{\pm}$, $\widehat{m}_0(z)$, from Algorithm \ref{Ea:realVESD}. Then there exists $C> 0$ such that if Lanczos is run for $n = \lceil C \log N \rceil $ steps the estimators satisfy
    \begin{align}
        |\widehat{\gamma}_{\pm}-\gamma_{\pm}| \prec N^{-1/2}, \quad |\widehat{m}_{0}(z)-\widecheck m_{\mathbf{b}}(z)| \prec \frac{ N^{-1/2}}{\mathrm{Im}^2z},
    \end{align}
    where $\widecheck m_{\vec b}(z)$ is the Stieltjes transform of $\widecheck \mu_{\vec b}$ defined in \eqref{eq:checkmu}.
\end{theorem}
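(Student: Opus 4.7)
The plan is to combine the two Cholesky estimates of the previous subsection, namely Corollary~\ref{cor:checkmuAsympt} (exponential convergence of the Cholesky entries of $\widecheck\mu_{\vec b}$ to the limiting constants) and Corollary~\ref{cor:checkmuPert} (polynomial closeness of the Lanczos-produced entries to those of $\widecheck\mu_{\vec b}$), to first control the Cholesky factor produced by Algorithm~\ref{Ea:realVESDA} entrywise, then push this control through the explicit formula for $\widehat m_0(z)$ furnished by Lemma~\ref{l:finitepert} via a resolvent identity.

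Concretely, choose $C$ large enough that $C\kappa > 1/2$ (with $\kappa$ from Corollary~\ref{cor:checkmuAsympt}) and set $n = \lceil C \log N \rceil$; this is certainly $\ll N^{1/6}$ so Corollary~\ref{cor:checkmuPert} applies, and the factor $n^3$ in its error is absorbed into $\prec$. For every $0 \le j \le n-2$ we then have
\[
\widehat\alpha_j - \alpha_j(\widecheck\mu_{\vec b}) \prec N^{-1/2}, \qquad \widehat\beta_j - \beta_j(\widecheck\mu_{\vec b}) \prec N^{-1/2},
\]
while Corollary~\ref{cor:checkmuAsympt} simultaneously gives $\alpha_j(\widecheck\mu_{\vec b}) = \tfrac12(\sqrt{\gamma_+}+\sqrt{\gamma_-}) + \OO_{\prec}(e^{-\kappa j})$ and $\beta_j(\widecheck\mu_{\vec b}) = \tfrac12(\sqrt{\gamma_+}-\sqrt{\gamma_-}) + \OO_{\prec}(e^{-\kappa j})$, and $e^{-\kappa(n-2)}\ll N^{-1/2}$. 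Expanding $\widehat\gamma_\pm = (\widehat\alpha_{n-2} \pm \widehat\beta_{n-2})^2$ yields the first claim $|\widehat\gamma_\pm - \gamma_\pm| \prec N^{-1/2}$.

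For the Stieltjes transform, Lemma~\ref{l:finitepert} identifies $\widehat m_0(z) = \vec e_1^* (\widehat{\mathcal J} - z)^{-1} \vec e_1$, where $\widehat{\mathcal J} = \mathcal L \mathcal L^*$ and $\mathcal L$ is the extended factor in \eqref{eq:L00}, while by \eqref{eq:JacResId} $\widecheck m_{\vec b}(z) = \vec e_1^* (\mathcal J(\widecheck\mu_{\vec b}) - z)^{-1} \vec e_1$. The resolvent identity gives
\[
\widehat m_0(z) - \widecheck m_{\vec b}(z) = \vec e_1^* (\widehat{\mathcal J} - z)^{-1}\bigl(\mathcal J(\widecheck\mu_{\vec b}) - \widehat{\mathcal J}\bigr)(\mathcal J(\widecheck\mu_{\vec b}) - z)^{-1}\vec e_1,
\]
so self-adjointness yields $|\widehat m_0(z) - \widecheck m_{\vec b}(z)| \le \|\widehat{\mathcal J} - \mathcal J(\widecheck\mu_{\vec b})\|/(\mathrm{Im}\, z)^2$. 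Writing $\widehat{\mathcal J} - \mathcal J(\widecheck\mu_{\vec b}) = (\mathcal L - \mathcal L(\widecheck\mu_{\vec b}))\mathcal L^* + \mathcal L(\widecheck\mu_{\vec b})(\mathcal L - \mathcal L(\widecheck\mu_{\vec b}))^*$, and bounding the operator norm of the bidiagonal difference by (a constant times) its largest entry, the two estimates above---used for $j \le n-2$ from Corollary~\ref{cor:checkmuPert} and for $j \ge n-2$ from the constant extension combined with Corollary~\ref{cor:checkmuAsympt}---give $\|\mathcal L - \mathcal L(\widecheck\mu_{\vec b})\| \prec N^{-1/2}$, and hence $|\widehat m_0(z) - \widecheck m_{\vec b}(z)| \prec N^{-1/2}/(\mathrm{Im}\,z)^2$ as required.

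The main technical obstacle I anticipate is the control of the semi-infinite tail of $\mathcal L$ past column $n-2$: since Corollary~\ref{cor:checkmuPert} is only valid for $n \ll N^{1/6}$, one cannot simply apply it at every level. Instead, the exponential decay from Corollary~\ref{cor:checkmuAsympt} must be used to argue that the constant extension adopted in \eqref{eq:L00} agrees with $\mathcal L(\widecheck\mu_{\vec b})$ up to $\OO_\prec(N^{-1/2})$ on the tail, provided $n = \lceil C \log N \rceil$ with $C$ satisfying $C\kappa > 1/2$; this is the single place where the logarithmic Lanczos budget gets used in an essential way. A secondary point that deserves verification is the hypothesis $|\vec u_i^* \vec b|^2 > N^{-\sigma}$ in Corollaries~\ref{cor:checkmuPert} and~\ref{cor:checkmuAsympt}; for arbitrary $\vec b \in \mathbb S^{N-1}$ this can be justified for the $r$ spike eigenvectors using the anisotropic local laws of Section~\ref{subsec:LocLaw} together with Assumption~\ref{as:TechA}(c,d), so that the perturbation machinery remains applicable.
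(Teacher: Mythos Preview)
Your proposal is correct and essentially matches the paper's proof: both choose $n = \lceil C\log N\rceil$ with $C\kappa > 1/2$, combine Corollaries~\ref{cor:checkmuAsympt} and~\ref{cor:checkmuPert} for the endpoint estimate, and pass to $\widehat m_0$ via the second resolvent identity together with the self-adjoint bound $\|(\mathcal J - z)^{-1}\| \le (\mathrm{Im}\,z)^{-1}$. The only differences are bookkeeping---the paper inserts an intermediate Cholesky factor (exact $\widecheck\alpha_j,\widecheck\beta_j$ for $j\le n-3$, then the limiting constants) and splits the comparison into two resolvent estimates rather than your single one---and the paper opens with an explicit bound $\|\mathcal L\|,\|\mathcal L(\widecheck\mu_{\vec b})\| = \OO_\prec(1)$ via $\|\mathcal J(\widecheck\mu_{\vec b})\|$, which your factorization $\widehat{\mathcal J} - \mathcal J(\widecheck\mu_{\vec b}) = (\mathcal L - \mathcal L(\widecheck\mu_{\vec b}))\mathcal L^* + \mathcal L(\widecheck\mu_{\vec b})(\mathcal L - \mathcal L(\widecheck\mu_{\vec b}))^*$ also requires and should make explicit.
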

\begin{proof}
    
    We first show that the Cholesky factors $\{\widecheck\alpha_i,\widecheck\beta_i\}$ and $\{\widehat{\alpha}_i,\widehat{\beta}_{i}\}$ associated with $\widecheck{\mu}_{\mathbf{b}}$ and $\mu_{W,\mathbf{b}}$, respectively, are bounded from above, with overwhelming probability. From the structure of the Cholesky decomposition, we have $\widecheck\alpha_0^2 = \widecheck a_0$ and $\widecheck\alpha_i^2+\widecheck\beta_i^2 = \widecheck a_i$ where $\{\widecheck a_i\}$ are the diagonal entries of $\mathcal{J}(\widecheck{\mu}_{\mathbf{b}})$. Using the definition of $\widecheck\mu_{\mathbf{b}}$, we know that $\|\mathcal{J}(\widecheck{\mu}_{\mathbf{b}})\|_2 \prec \max_{1 \leq i \leq r} f(-\tilde{\sigma}_i^{-1}) := K^2$. Since $\widecheck a_i \leq \|\mathcal{J}(\widecheck{\mu}_{\mathbf{b}})\|_2$, it follows directly that $|\widecheck\alpha_i|, |\widecheck\beta_i| \prec K$. Moreover, Lemma \ref{lem:EigValVecConv} shows that $\|\mathcal{J}(\mu_{W,\mathbf{b}})\|_2 = \|\mathcal{J}(\widecheck{\mu}_{\mathbf{b}})\|_2 + \OO_{\prec}(N^{-1/2})$ and by a similar argument $|\widehat{\alpha}_i|,|\widehat{\beta}_i|$ are bounded by $K$ with overwhelming probability.

    The relation between the sequences $\{\widehat{\alpha}_i, \widehat{\beta}_i\}$ and $\{\widecheck\alpha_i, \widecheck\beta_i\}$, along with the exponential convergence of $\{\widecheck\alpha_i, \widecheck\beta_i\}$, can be used to establish the first probabilistic result. In particular, we have
    \begin{align*}
        |\widehat{\gamma}_{+}-\gamma_{+}| = \abs{(\widehat{\alpha}_{n-2}+\widehat{\beta}_{n-2})^2-(\alpha+\beta)^2} \leq C_1 \paren{|\widehat{\alpha}_{n-2}-\alpha|+|\widehat{\beta}_{n-2}-\beta|},
    \end{align*}
    and using Corollaries \ref{cor:checkmuAsympt} and \ref{cor:checkmuPert}, we find 
    \begin{align}\label{eq:ErrBoundnm2}
    |\widehat{\alpha}_{n-2} - \alpha| \leq |\widehat{\alpha}_{n-2} - \widecheck\alpha_{n-2}| + |\widecheck\alpha_{n-2} - \alpha| =\OO_{\prec} (n^3 N^{-1/2}+e^{-\kappa n}),
    \end{align}
    and a similar bound holds for $|\widehat{\beta}_{n-2}-\beta|$. Thus, when $n = \lceil C \log N \rceil $ with $C > \frac{1}{2\kappa}$, it follows that $|\widehat{\gamma}_{\pm} - \gamma_{\pm}| = \OO_{\prec} (N^{-1/2})$.

    To examine the convergence of $\widehat m_0(z)$, we consider 
    \begin{align}\label{eq:L0}
        \mathcal{L}_0 = \begin{bmatrix}
            \widehat{\alpha}_0 & & & & & & \\
            \widehat{\beta}_0 & \ddots & & & & & \\
            & \ddots & \widehat{\alpha}_{n-3} & & & & \\
            & & \widehat{\beta}_{n-3} & \widehat{\alpha}_{n-2} & & &\\
            & & & \widehat{\beta}_{n-2} & \widehat{\alpha}_{n-2} & & \\
            & & & & \widehat{\beta}_{n-2} & \ddots & \\
            & & & & & \ddots & \ddots
        \end{bmatrix}, ~ \mathcal{L} = \begin{bmatrix}
            \widecheck\alpha_0 & & & & & & \\
            \widecheck\beta_0 & \ddots & & & & & \\
            & \ddots & \widecheck\alpha_{n-3} & & & & \\
            & & \widecheck\beta_{n-3} & \alpha & & &\\
            & & & \beta & \alpha & & \\
            & & & & \beta & \ddots & \\
            & & & & & \ddots & \ddots
        \end{bmatrix},
    \end{align}
    and define $m(z) = \vec e_1^*(\mathcal{L}\mathcal{L}^*-z)^{-1}\vec e_1$ where we emphasize that $\mathcal L_0$ is constant on the diagonal from the $(n-1)$th entry onwards. Note that 
    \begin{align*}
        m_0(z) = \vec e_1^*(\mathcal{L}_0\mathcal{L}_0^*-z)^{-1}\vec e_1 \quad \text{and} \quad \widecheck m_{{\mathbf{b}}}(z) = \vec e_1^*\paren{\widecheck{\mathcal{L}}\widecheck{\mathcal{L}}^*-z}^{-1}\vec e_1,
    \end{align*}
    where $\widecheck{\mathcal{L}} = \mathcal{L}(\widecheck{\mu}_{\mathbf{b}})$. Our argument is structured in two main steps: first, we show that $m(z)$ approximates $\widecheck m_{\vec b}(z)$; then we demonstrate that the estimator $\widehat m_0(z)$ is relatively close to $m(z)$.
    
    Observe that
    \begin{align*}
        |m(z)-\widecheck m_{{\mathbf{b}}}(z)| &= \abs{\vec e_1^* \paren{(\mathcal{L}\mathcal{L}^*-z)^{-1}-(\widecheck{\mathcal{L}}\widecheck{\mathcal{L}}^*-z)^{-1}}\vec e_1}\\ &\leq \|(\mathcal{L}\mathcal{L}^*-z)^{-1}\|_2~\|\mathcal{L}\mathcal{L}^*-\widecheck{\mathcal{L}}\widecheck{\mathcal{L}}^*\|_2~ \|(\widecheck{\mathcal{L}}\widecheck{\mathcal{L}}^*-z)^{-1}\|_2,
    \end{align*}
    where the inequality follows from the second resolvent identity. The resolvents can be easily bounded as
    \begin{align*}
        \| (\widecheck{\mathcal{L}}\widecheck{\mathcal{L}}^*-z)^{-1} \|_2 \leq \frac{1}{\mathrm{Im}\,z} \quad \text{and} \quad \|(\mathcal{L}\mathcal{L}^{*}-z)^{-1}\|_2 \leq \frac{1}{\mathrm{Im}\,z},
    \end{align*}
    for $z\in \C^{+}$. On the other hand, we have
    \begin{align*}
        \| \mathcal{L}\mathcal{L}^* - \widecheck{\mathcal{L}}\widecheck{\mathcal{L}}^* \|_2 \leq \| \mathfrak{a} \|_{\infty} + 2 \| \mathfrak{b} \|_{\infty},
    \end{align*}
    where $\mathfrak{a}$ and $\mathfrak{b}$ represents the vectors of diagonal and off-diagonal entries of $\mathcal{L}\mathcal{L}^* - \widecheck{\mathcal{L}}\widecheck{\mathcal{L}}^*$, respectively. Using the structure of $\mathcal{L}$ and $\widecheck{\mathcal{L}}$, along with Corollary \ref{cor:checkmuAsympt}, we find that
    \begin{align*}
        \|\mathfrak{a}\|_{\infty} \leq \sup_{n-2\leq i <\infty} |\widecheck\alpha_{i}^2-\alpha^2|+ \sup_{n-2\leq i <\infty} |\widecheck\beta_i^2 - \beta^2 | = \OO_{\prec}(e^{-\kappa n}), 
    \end{align*}
    and
    \begin{align*}
        \|\mathfrak{b}\|_{\infty} \leq \sup_{n-2\leq i<\infty} |\widecheck\alpha_{i}\widecheck\beta_{i}-\alpha\beta| = \OO_{\prec}(e^{-\kappa n}).
    \end{align*}
    Using the logarithmic lower bound on $n$, we find $\|\mathcal{L}\mathcal{L}^* - \widecheck{\mathcal{L}}\widecheck{\mathcal{L}}^* \|_2 = \OO_{\prec}(N^{-1/2})$. We conclude that
    \begin{align}\label{eq:ErrB1}
        |m(z)-\widecheck m_{{\mathbf{b}}}(z)|= \OO_{\prec}\paren{ \frac{ N^{-1/2}}{\mathrm{Im}^2z}}.
    \end{align}
    
    Employing the same methods, we now demonstrate that $\widehat{m}_0(z)$ is approximated well by $m(z)$ with overwhelming probability. Again using the second resolvent identity and the resolvent bounds, we have
    \begin{align*}
        |m(z)-\widehat{m}_{0}(z)| \leq \frac{1}{\mathrm{Im}^2z}\|\widecheck{\mathcal{L}}\widecheck{\mathcal{L}}^*-\mathcal{L}_0\mathcal{L}_0^*\|_2.
    \end{align*}
    The right-hand side can be further bounded using
    \begin{align*}
        \| \widecheck{\mathcal{L}}\widecheck{\mathcal{L}}^*-\mathcal{L}_0\mathcal{L}_0^* \|_2 \leq \|\mathfrak{a}_0\|_{\infty} + 2\|\mathfrak{b}_0\|_{\infty},
    \end{align*}
    where $\mathfrak{a}_0$ and $\mathfrak{b}_0$ are the vectors of diagonal and off-diagonal entries of $\widecheck{\mathcal{L}}\widecheck{\mathcal{L}}^*-\mathcal{L}_0\mathcal{L}_0^*$. The vectors $\mathfrak{a}_0$ and $\mathfrak{b}_0$ can be easily bounded as
    \begin{align*}
        \|\mathfrak{a}_0\|_{\infty} \leq |\widecheck\alpha_0^2-\widehat{\alpha}_0^2| + |\alpha^2-\widehat{\alpha}_{n-2}^2| + |\beta^2-\widehat{\beta}_{n-2}^2| + \max_{1\leq i\leq n-3} |\widecheck\alpha_i^2-\widehat{\alpha}_i^2| + |\widecheck\beta_{i-1}^2-\widehat{\beta}_{i-1}^2| = \OO_{\prec} (N^{-1/2}),
    \end{align*}
    and 
    \begin{align*}
        \|\mathfrak{b}_0\|_{\infty}\leq |\alpha\beta-\widehat{\alpha}_{n-2}\widehat{\beta}_{n-2}| + \max_{0\leq i\leq n-3} |\widecheck\alpha_i\widecheck\beta_{i}-\widehat{\alpha}_i\widehat{\beta}_{i}| =\OO_{\prec} (N^{-1/2}),
    \end{align*}
    where we used \eqref{eq:ErrBoundnm2}, Corollaries \ref{cor:checkmuAsympt}, \ref{cor:checkmuPert} and the fact that $n= \frac{\log N}{2\kappa}$. We deduce that 
    \begin{align}\label{eq:ErrB2}
        |m(z)-\widehat{m}_0(z)| =\OO_{\prec} \paren{\frac{N^{-1/2}}{\mathrm{Im}^2 z}}.
    \end{align}
    Finally, the consistency of $\widehat m_0(z)$ follows by combining \eqref{eq:ErrB1} and \eqref{eq:ErrB2}.

    \end{proof}

\section{Statistical consistency of our proposed estimator}\label{sec:Estimation}
In this section, we demonstrate the performance and consistency of our estimators for the ASD of $W$ and the spikes in the covariance matrix. We begin by proving that our estimator for the Stieltjes transform of the ASD is robust, meaning that it converges to the true Stieltjes transform as $N\to \infty$.
\begin{theorem}\label{thm:SolveP1}
    Assume that the sample covariance matrix $W$, described in \eqref{eq:SCM_Model}, satisfies Assumption~\ref{as:TechA} and consider the estimators $\widehat{\gamma}_{\pm}$ and $\widehat{ m}_0$, as defined in Algorithm \ref{Ea:ESD} for $k = 1$. Then there exists $C> 0$ such that if Lanczos is run for $n = \lceil C \log N \rceil $ steps the estimators satisfy
    \begin{align}
        |\widehat{\gamma}_{\pm}-\gamma_{\pm}|\prec N^{-1/2}, \quad |\widehat{m}_{0}(z)-m_{\rm{ASD}}(z)| \prec \frac{ N^{-1/2}}{\mathrm{Im}^2z}, \quad z \in \widetilde {\mathcal D}.
    \end{align}
\end{theorem}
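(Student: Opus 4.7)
The plan is to decompose the error $|\widehat m_0(z) - m_{\rm ASD}(z)|$ using the spiked VASD $\widecheck \mu_{\vec b}$ from \eqref{eq:checkmu} as an intermediate measure, namely
\[
|\widehat m_0(z) - m_{\rm ASD}(z)| \leq |\widehat m_0(z) - \widecheck m_{\vec b}(z)| + |\widecheck m_{\vec b}(z) - m_{\rm ASD}(z)|,
\]
and bound each piece separately.  The first term is controlled directly by Theorem~\ref{thm:ConvRVESDEstim}, which yields $|\widehat m_0(z) - \widecheck m_{\vec b}(z)| \prec N^{-1/2}/\mathrm{Im}^2 z$ whenever $n = \lceil C \log N\rceil$ with $C$ sufficiently large, and which simultaneously provides the support estimate $|\widehat \gamma_\pm - \gamma_\pm| \prec N^{-1/2}$.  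This already establishes the support claim in the statement; it remains to handle the second term.

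For that piece I would exploit the fact that $\vec b$ is uniform on $\mathbb S^{N-1}$ to show $\widecheck \mu_{\vec b}$ is close to $\mu_{\rm ASD}$.  Recall from \eqref{eq:hatmu} that $\widehat \mu_{\vec b} = \mu_{\vec b} - \mu_{\rm Disc}$, with $\mu_{\rm Disc}$ a signed discrete measure supported on the fixed finite set $\{\gamma_j\}_{j=1}^r \cup \{\lambda_j(W)\}_{j=1}^r$ with weights that are each of the form $v_j$ or $|\vec u_j^* \vec b|^2$.  Concentration of quadratic forms on the sphere (see \eqref{eq:ConcUnifSphere}) applied to the weights $|\vec u_j^* \vec b|^2$, together with the analogous estimate for the deterministic weights $v_j$ read off from \eqref{eq:LimCharact}, give $\|\mu_{\rm Disc}\|_{\mathrm{TV}} = \OO_{\prec}(N^{-1})$.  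Since $z \in \widetilde{\mathcal D}$ stays at an $\OO(1)$ distance from each $\gamma_j$ by \eqref{eq:SpikedSpecSet} and from each $\lambda_j(W)$ with overwhelming probability by Lemma~\ref{lem:EigValVecConv}, we conclude that $|m_{\rm Disc}(z)| \prec N^{-1}$.  Writing $\widecheck m_{\vec b}(z) = (m_{\vec b}(z) - m_{\rm Disc}(z))/\|\widehat \mu_{\vec b}\|$ with $\|\widehat \mu_{\vec b}\| = 1 - \OO_{\prec}(N^{-1})$, a short computation gives $|\widecheck m_{\vec b}(z) - m_{\vec b}(z)| = \OO_{\prec}(N^{-1})$, uniformly on $\widetilde{\mathcal D}$.

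Combining with Theorem~\ref{thm:VESDHaar}, which asserts $|m_{\vec b}(z) - m_{\rm ASD}(z)| = \OO_{\prec}(N^{-1/2})$ for $z \in \widetilde{\mathcal D}$, we obtain $|\widecheck m_{\vec b}(z) - m_{\rm ASD}(z)| = \OO_{\prec}(N^{-1/2})$.  Adding this to the first estimate and absorbing the lower-order term into the bound $N^{-1/2}/\mathrm{Im}^2 z$ (valid because $\mathrm{Im}\, z \leq \tau^{-1}$ on $\widetilde{\mathcal D}$) yields the theorem.

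The main technical obstacle I anticipate is the rigorous verification of the hypothesis $|\vec u_i^* \vec b|^2 > N^{-\sigma}$ underlying Corollaries~\ref{cor:checkmuAsympt} and~\ref{cor:checkmuPert}, and hence Theorem~\ref{thm:ConvRVESDEstim}, when $\vec b$ is Haar-distributed on $\mathbb S^{N-1}$.  Because $|\vec u_i^* \vec b|^2$ is Beta$(1/2,(N-1)/2)$-distributed, whose density diverges at the origin, only a polynomial rather than overwhelming lower-tail bound is available.  Handling this carefully will require either working on an auxiliary event of the form $\{\min_{i\le r}|\vec u_i^* \vec b|^2 > N^{-\sigma}\}$ for $\sigma$ slightly larger than one and absorbing its complement into the $\prec$ notation, or strengthening the perturbation arguments so that they tolerate events of small polynomial probability; the former approach seems the cleaner route and is consistent with how the preceding results in the paper are framed.
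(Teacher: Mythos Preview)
Your decomposition and the chain of lemmas you invoke are exactly the paper's argument: Theorem~\ref{thm:ConvRVESDEstim} handles $|\widehat m_0 - \widecheck m_{\vec b}|$ and $|\widehat\gamma_\pm - \gamma_\pm|$, and Theorem~\ref{thm:VESDHaar} (which already packages the two steps $m_{\vec b}\to m_{0,\vec b}\to m_{\rm ASD}$) together with a direct estimate on $m_{\rm Disc}$ handles the rest. One quantitative correction: the concentration inequality \eqref{eq:ConcUnifSphere} applied to $A=\vec u_j\vec u_j^*$ only gives $|\vec u_j^*\vec b|^2\prec N^{-1/2}$, not $N^{-1}$ (set $t=N^{-1/2+\epsilon}$ in the exponent), and likewise for $v_j$; the paper works with this rate and obtains $|\widecheck m_{\vec b}(z)-m_{\vec b}(z)|=\OO_\prec(N^{-1/2}/\mathrm{Im}\,z)$, which is all that is needed for the stated bound.
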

\begin{proof}
    The estimate on $\widehat \gamma_\pm$ follows using the same argument as in Theorem~\ref{thm:ConvRVESDEstim}. Using \eqref{eq:hatmu} and \eqref{eq:checkmu}, we have
    \begin{align*}
        \widecheck m_{{\vec b}}(z) = \frac{1}{1+\Delta} \widehat m_{{\vec b}}(z) = \frac{1}{1+\Delta} (m_{{\vec b}}(z) - m_{{\mathrm{Disc}}}(z)), \quad \text{where} \quad  \Delta = \sum_{j=1}^r (\abs{ \mathbf{u}_i^*\mathbf{b}}^2 - v_j).
    \end{align*}
    The concentration bound in \eqref{eq:ConcUnifSphere} gives $\abs{ \mathbf{u}_i^*\mathbf{b}}^2\prec N^{-1/2}$, which in turn implies $\Delta \prec N^{-1/2}$. Combining this with the fact that $|m_{{\mathrm{Disc}}}(z)|\prec \frac{1}{\mathrm{Im}z}N^{-1/2}$, we find 
     \begin{align}\label{eq:ErrB3}
         |\widecheck m_{{\vec b}}(z) - m_{\vec b}(z)| =\OO_{\prec} \paren{\frac{N^{-1/2}}{\im z}}.
     \end{align}
    Now, observe that
    \begin{align*}
        |\widehat m_0(z) - m_{\mathrm{ASD}}(z)| &\leq |\widehat m_0(z)-\widecheck m_{\vec b}(z)| + |\widecheck m_{\vec b}(z)-m_{\vec b}(z)|\\ & \quad + |m_{\vec b}(z) - m_{0,\vec b}(z)| + |m_{0,\vec b}(z)-m_{\mathrm{ASD}}(z)|,
    \end{align*}
    where $m_{0,\vec b}$ is defined in \eqref{eq:LimCharact}.
     Finally, the bound for $|\widehat m_0(z) - m_{\mathrm{ASD}}(z)|$ follows by combining the bounds in \eqref{eq:ErrB3}, Theorems~\ref{thm:VESDHaar} and Theorem~\ref{thm:ConvRVESDEstim}.
    
\end{proof}


Next, we analyze the consistency of our proposed methods for determining both the number $r$ and the positions of the spikes in the spectrum of $W$, without the need to compute the true eigenvalues of the covariance matrix $W$.

\begin{theorem}[Solution of \MainProblem]\label{thm:SolveP23}
Suppose that the sample covariance matrix $W$, as given in \eqref{eq:SCM_Model}, satisfies Assumption~\ref{as:TechA}.
With $k = 1$,  suppose Lanczos is run for $n = \lceil C \log N \rceil $ steps for $C >0$ sufficiently large. Let $\widehat m_0(z)$ be the output of Estimation Algorithm~\ref{Ea:ESD} and fix $0 < \delta < 1/2$.  Let $\widehat r$ be the number of poles of $\widehat m_0(z)$ for $z > \widehat \gamma_+ + N^{-\delta}$  and let $\widehat \gamma_j$, $j = 1,2,\ldots,\widehat r$ be their locations. Then for every $D > 0$ there exists $N_0(D)$ such that for $N\geq N_0(D)$,
\begin{align*}
   \mathbb P (\widehat r \neq r) \leq  N^{-D},
\end{align*}
and therefore, by choosing $\delta$ arbitrarily close to $1/2$,
\begin{align*}
    \max_{1 \leq j \leq r} | \widehat \gamma_j -\gamma_j| \prec N^{-1/2}.
\end{align*}

\end{theorem}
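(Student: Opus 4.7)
\emph{Setup.} The plan is to reinterpret the theorem as a statement about discrete eigenvalues of a self-adjoint operator. By Lemma~\ref{l:finitepert}, $\widehat m_0(z) = \vec e_1^*(\mathcal J_0 - z)^{-1} \vec e_1$, where $\mathcal J_0 = \mathcal L_0 \mathcal L_0^*$ is the self-adjoint semi-infinite Jacobi matrix built from the output of Subroutine~\ref{Ea:realVESDA} and extended to be Toeplitz beyond row $n-1$. Since $\mathcal J_0$ is tridiagonal with strictly positive off-diagonal entries, $\vec e_1$ is cyclic for $\mathcal J_0$, and the poles of $\widehat m_0$ coincide exactly with the isolated eigenvalues of $\mathcal J_0$ lying outside its essential spectrum $[\widehat\gamma_-,\widehat\gamma_+]$. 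The task thus reduces to counting these isolated eigenvalues above $\widehat\gamma_+ + N^{-\delta}$ and locating them.

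\emph{Operator-norm proximity.} My comparison object will be $\widecheck{\mathcal J} := \widecheck{\mathcal L}\widecheck{\mathcal L}^*$, the Jacobi matrix of the spiked VASD $\widecheck\mu_{\vec b}$ from \eqref{eq:checkmu}. By construction the essential spectrum of $\widecheck{\mathcal J}$ is $[\gamma_-,\gamma_+]$, and its only discrete eigenvalues above $\gamma_+$ are $\{\lambda_j(W)\}_{j=1}^r$; Assumption~\ref{as:TechA}(c) and Lemma~\ref{lem:EigValVecConv} guarantee $\lambda_j(W) \geq \gamma_+ + c_0$ with overwhelming probability for some fixed $c_0>0$. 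The key estimate I will establish is $\|\mathcal J_0 - \widecheck{\mathcal J}\|_{\mathrm{op}} \prec N^{-1/2}$. For Cholesky entries with index at most $n-3$, Corollary~\ref{cor:checkmuPert} gives $|\widehat\alpha_i - \widecheck\alpha_i|, |\widehat\beta_i - \widecheck\beta_i| = \OO_{\prec}(N^{-1/2} n^3)$. For the Toeplitz tail of $\mathcal L_0$, a triangle inequality combined with Corollary~\ref{cor:checkmuAsympt} bounds the deviation from $\widecheck\alpha_i, \widecheck\beta_i$ ($i \geq n-2$) by $\OO_{\prec}(N^{-1/2} n^3 + e^{-\kappa n})$. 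Choosing $n = \lceil C \log N\rceil$ with $C\kappa > 1/2$ makes both terms $\prec N^{-1/2}$, and the bidiagonal structure converts this entrywise bound into $\|\mathcal L_0 - \widecheck{\mathcal L}\|_{\mathrm{op}} \prec N^{-1/2}$; since $\|\mathcal L_0\|_{\mathrm{op}}, \|\widecheck{\mathcal L}\|_{\mathrm{op}} = \OO(1)$, one concludes $\|\mathcal J_0 - \widecheck{\mathcal J}\|_{\mathrm{op}} \prec N^{-1/2}$.

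\emph{Counting and location via min--max.} I will use the extended min--max sequence $\Lambda_k(A)$ that returns the $k$th largest discrete eigenvalue of a bounded self-adjoint $A$ above $\sigma_{\mathrm{ess}}(A)$ when one exists and $\sup\sigma_{\mathrm{ess}}(A)$ otherwise. The Weyl-type stability $|\Lambda_k(\mathcal J_0) - \Lambda_k(\widecheck{\mathcal J})| \leq \|\mathcal J_0 - \widecheck{\mathcal J}\|_{\mathrm{op}} \prec N^{-1/2}$ then holds for every $k \geq 1$. For $k \leq r$, $\Lambda_k(\widecheck{\mathcal J}) = \lambda_k(W) \geq \gamma_+ + c_0$, so $\Lambda_k(\mathcal J_0) \geq \gamma_+ + c_0/2$ for large $N$, producing $r$ discrete eigenvalues of $\mathcal J_0$ well above $\widehat\gamma_+ + N^{-\delta}$. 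For $k = r+1$, $\Lambda_{r+1}(\widecheck{\mathcal J}) = \gamma_+$, whence $\Lambda_{r+1}(\mathcal J_0) \leq \gamma_+ + \OO_{\prec}(N^{-1/2})$; combining with $\widehat\gamma_+ = \gamma_+ + \OO_{\prec}(N^{-1/2})$ from Theorem~\ref{thm:SolveP1} and $\delta < 1/2$, the threshold $\widehat\gamma_+ + N^{-\delta}$ strictly exceeds $\gamma_+ + \OO_{\prec}(N^{-1/2})$ with overwhelming probability, so no further isolated eigenvalue can exceed it. Therefore $\widehat r = r$ with overwhelming probability. Pairing the top $r$ eigenvalues in decreasing order then yields $|\widehat\gamma_j - \gamma_j| \leq |\widehat\gamma_j - \lambda_j(W)| + |\lambda_j(W) - \gamma_j| \prec N^{-1/2}$, combining the Weyl bound with Lemma~\ref{lem:EigValVecConv}.

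\emph{Main obstacle.} The delicate point will be handling the essential-spectrum edge. Both $\widehat\gamma_+$ and $\gamma_+$ fluctuate on the scale $N^{-1/2}$, and a priori $\mathcal J_0$ could have spurious isolated eigenvalues in a thin strip just above $\widehat\gamma_+$ with no counterpart in $\widecheck{\mathcal J}$. The $N^{-\delta}$ cushion with $\delta<1/2$ is exactly the buffer that absorbs these edge fluctuations and lets the Weyl-type comparison rule out any such spurious contribution from the counted set. The technical care will be in justifying the extended min--max formulation for these non-compact semi-infinite operators and in converting the entrywise Cholesky estimates of Corollaries~\ref{cor:checkmuAsympt} and \ref{cor:checkmuPert} into a genuine operator-norm bound at the $N^{-1/2}$ precision required.
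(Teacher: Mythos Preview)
Your proposal is correct and shares the paper's core estimate $\|\mathcal J_0 - \mathcal J(\widecheck\mu_{\vec b})\|_{\mathrm{op}} \prec N^{-1/2}$, obtained in the same way from Corollaries~\ref{cor:checkmuAsympt} and~\ref{cor:checkmuPert} with $n = \lceil C\log N\rceil$. The genuine difference is in how the eigenvalue count is read off from this bound. The paper proceeds by integrating the two resolvents over small circles about each asymptotic location $\gamma_j = f(-\tilde\sigma_j^{-1})$ and comparing ranks of the resulting Riesz spectral projectors, then arguing separately that the off-diagonals of $\mathcal J_0$ are nonzero so the discrete eigenvalues are simple and each is a pole of $\widehat m_0$. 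You instead invoke the min--max characterization of eigenvalues above the essential spectrum and the associated Weyl-type inequality $|\Lambda_k(\mathcal J_0)-\Lambda_k(\widecheck{\mathcal J})|\leq\|\mathcal J_0-\widecheck{\mathcal J}\|$. Your route is more elementary and delivers the location estimate $|\widehat\gamma_j-\gamma_j|\prec N^{-1/2}$ in the same stroke, without having to localize around individual $\gamma_j$; the paper's projector argument is more granular but requires the extra multiplicity discussion. One point you assert without justification is that the off-diagonal entries of $\mathcal J_0$ are strictly positive (needed so that $\vec e_1$ is cyclic and poles of $\widehat m_0$ match the isolated eigenvalues exactly): this is not automatic but holds with overwhelming probability, since it is equivalent to Lanczos running without breakdown; the paper spells this out via the almost-sure simplicity of the spectrum of $W$ and nondegeneracy of $\mu_{W,\vec b}$.
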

\begin{proof}
    Consider the stochastic measure $\widecheck\mu_{\vec b}$ introduced in \eqref{eq:checkmu} and define the set 
    \begin{align*}
        \Omega = \{z \in \mathbb C ~:~ \| (\mathcal J(\widecheck\mu_{\vec b}) - z)^{-1} \| \leq N^{\delta} \}, \quad \delta < 1/2,
    \end{align*}
    which is nothing more that the set of all $z \in \mathbb C$ that are a distance at least $N^{-\delta}$ from the spectrum of $\mathcal J(\widehat{\mu}_{\vec b})$.
Recall \eqref{eq:L0} where now $\{\widehat \alpha_i,\widehat \beta_i\}_{i=0}^{n-2}$ are the Cholesky entries in Estimation Algorithm \ref{Ea:ESD}.  We estimate
    \begin{align*}
        \|\mathcal J(\widecheck\mu_{\vec b}) - \mathcal L_0 \mathcal L_0^*\| \leq \|\mathcal J(\widecheck\mu_{\vec b}) - \mathcal L \mathcal L^*\| + \|  \mathcal L \mathcal L^* - \mathcal L_0 \mathcal L_0^*\|.
    \end{align*}
    We have, from previous considerations, that $\widecheck\mu_{\vec b}$ satisfies Assumption~\ref{as:measurecond}, with fixed constants, with overwhelming probability due to Lemma~\ref{lem:EigValVecConv}, and the fact that $| \vec u_j^* \vec b |^2$ is chi-distributed.  So, on a set of overwhelmingly large probability
    \begin{align*}
        \|\mathcal J(\widecheck\mu_{\vec b}) - \mathcal L \mathcal L^*\|  \leq C e^{-\kappa n}.
    \end{align*}
    Using the argument of Theorem~\ref{thm:PertJacobCholAsympt}, we find
    \begin{align*}
        \|\mathcal J(\widecheck\mu_{\vec b}) - \mathcal L_0 \mathcal L_0^*\| \prec N^{-1/2} n^3 + e^{-\kappa n}.
    \end{align*}
    Choosing $n = \lceil c \log N \rceil $ for $c$ sufficiently large, we find
    \begin{align*}
        \|\mathcal J(\widecheck\mu_{\vec b}) - \mathcal L_0 \mathcal L_0^*\| \prec N^{-1/2}.
    \end{align*}
    Then, for $z \in \Omega$, we have
    \begin{align*}
        \|\mathcal J(\widecheck\mu_{\vec b}) - \mathcal L_0 \mathcal L_0^*\|\| (\mathcal J(\widecheck\mu_{\vec b}) - z)^{-1} \| \prec N^{-1/2 + \delta},
    \end{align*}
    which implies that $\| (\mathcal L_0 \mathcal L_0^* - z)^{-1} \| \prec N^\delta$, and, in particular, with overwhelming probability $\mathcal L_0 \mathcal L_0^*$ has no elements of its spectrum within $\Omega$. We have the second resolvent identity
    \begin{align*}
        (\mathcal L_0 \mathcal L_0^* - z)^{-1} - (\mathcal J(\widecheck\mu_{\vec b}) - z)^{-1} = (\mathcal L_0 \mathcal L_0^* - z)^{-1} ( \mathcal J(\widecheck\mu_{\vec b}) - \mathcal L_0 \mathcal L_0^*) (\mathcal J(\widecheck\mu_{\vec b}) - z)^{-1},
    \end{align*}
    which implies
    \begin{align*}
        \| (\mathcal L_0 \mathcal L_0^* - z)^{-1} - (\mathcal J(\widecheck\mu_{\vec b}) - z)^{-1}\| \prec N^{-1/2 + 2 \delta}, \quad z \in \Omega.
    \end{align*}
    Then by taking contour integrals of the resolvents around small circles $C_j$ of radius $\tau(N) > 0$, lying in $\Omega$, about $f(- \tilde \sigma_j^{-1})$ for $j = 1,2,\ldots,r$ we see that the spectral projectors for $\mathcal L_0 \mathcal L_0^*$ and $\mathcal J(\widecheck\mu_{\vec b})$ associated to eigenvalues within the circles satisfy
    \begin{align*}
        \left\| \frac{1}{2\pi i}\oint_{C_j} (z - \mathcal J(\widecheck\mu_{\vec b}))^{-1} \mathrm{d}z -   \frac{1}{2\pi i}\oint_{C_j} (z - \mathcal L_0 \mathcal L_0^*)^{-1} \mathrm{d}z\right\| \prec 2 \pi \tau(N) N^{-1/2 + 2\delta}.
    \end{align*}
    Since $|\lambda_j(W) - f(- \tilde \sigma_j^{-1})| \prec N^{-1/2}$, it follows that we may take $\tau(N) = CN^{-\delta}$ for some $C > 0$.
    Therefore, the spectral projectors have the same rank with overwhelming probability as $N \to \infty$.

   Next, suppose one of the off-diagonal entries of $\mathcal L_0 \mathcal L_0^*$ vanishes.  This implies that Lanczos failed to run to completion, implying that either $W$ has a repeated eigenvalue, or one of the projections of $\vec b_1$ in Estimation Algorithm~\ref{Ea:ESD} onto one of the eigenvectors of $W$ vanishes --- i.e. the VESD $\mu_{W,\vec b}$ is degenerate.  Using \cite{Christoffersen2025} along with \cite{Tikhomirov2016}, for example, this degeneracy occurs with probability $\OO(N^{-D})$ for any $D > 0$ --- so that with overwhelming probability $\mathcal L_0 \mathcal L_0^*$ has non-zero off-diagonal entries.  From the theory of Jacobi operators \cite{DeiftOrthogonalPolynomials}, with overwhelming probability, the discrete eigenvalues of $\mathcal L_0 \mathcal L_0^*$ are simple and $\widehat m_0(z)$ will have a pole at every discrete eigenvalue of $\mathcal L_0 \mathcal L_0^*$.  So, $\widehat m_0(z)$ will have a pole  within a distance $\tau$ of an asymptotic outlier $f(- \tilde \sigma_j^{-1})$.  On the event where the ranks of the projectors and are equal and $\mathcal L_0 \mathcal L_0^*$ has non-zero off-diagonal entries, the only way the multiplicity of eigenvalues within a circle of radius $\tau$, centered at $f(- \tilde \sigma_j^{-1})$, would be miscounted by counting the poles of $\widehat m_0(z)$ is if $\mathcal L_0 \mathcal L_0^*$ had an eigenvalue of multiplicity greater than one.  This has been ruled out.
\end{proof}


\section{Numerical Experiments and Comparisons}\label{sec:Comp}

 Throughout this section, we examine our approach using $C=1$ and $\delta=0.25$ in Algorithm~\ref{finaldetectionalgorithm}, as they yield reliable results. Based on several experiments and the fact that our support estimates exhibit fluctuations of order $N^{-1/2}$ , we find that values of $\delta$ closer to $1/2$ tend to overestimate the number of spikes, while values near $0$ tend to underestimate. A more detailed analysis of the choice of parameters is left for future work. 

We also compare our approach to other methods, including BEMA0, BEMA \cite{Ke2023}, DDPA \cite{PA}, and the eigen-gap method of Passemier and Yao (Pass\&Yao) \cite{passemier2014estimation}. The BEMA0 method constructs the right endpoint using bulk eigenvalues and applies a correction derived from the distribution of the largest eigenvalue. BEMA approximates the non-spiked covariance matrix $\Sigma_0$ as a diagonal matrix with iid entries from a Gamma distribution. It then uses Monte Carlo simulations to estimate the distribution of the largest eigenvalue, which is used to build a threshold for the spikes. DDPA, on the other hand, relies on parallel analysis, estimating the non-spiked covariance matrix using the diagonal of the sample covariance and establishing a threshold through a deterministic recursive procedure. Finally, the Pass\&Yao method approximates $\Sigma_0$ by $\sigma^2 I$, where $\sigma^2$ is estimated using maximum likelihood. The number of spikes is then identified by comparing the spectral gaps to a threshold derived from Monte Carlo simulations.

\begin{simulation}\label{sim:Johnstone}
    This example examines the performance of Algorithms~\ref{Ea:ESD} and \ref{finaldetectionalgorithm} in the context of Johnstone's spiked covariance model. We consider the sample covariance matrix
    \begin{align}\label{eq:Sim1Model}
        W = \frac{1}{M}\Sigma^{1/2}XX^*\Sigma^{1/2},
    \end{align}
    where 
    \begin{align}\label{eq:Sim1Sigma}
        \Sigma = \mathrm{diag}(5,5,4.5,\sigma^2,\dots, \sigma^2), \quad \sigma^2 = 1.5,
    \end{align}
    and $X\in \R^{N\times M}$ consists of iid normal entries. We note that the ASD of $W$ is explicitly characterized by the Marchenko–Pastur law, given by
    \begin{align}\label{eq:MPdens}
        \mu_{\mathrm{MP}}(\mathrm{d}x) =\varrho_{\mathrm{MP}}(x)\mathds{1}_{[\gamma_{-},\gamma_{+}]}\mathrm{d}x :=\frac{\sqrt{\gamma_{+}-x}\sqrt{x-\gamma_{-}}}{2\pi c \sigma^2 x}\mathds{1}_{[\gamma_{-},\gamma_{+}]}\mathrm{d}x,
    \end{align}
    where $c =  N/M$, $\gamma_{+} = \sigma^2(1+\sqrt{c})^2$ and $\gamma_{-} = \sigma^2(1-\sqrt{c})^2$. 

    \begin{figure}[tbp]
    \centering
    \begin{subfigure}[b]{0.45\textwidth}
        \centering
        \includegraphics[width=\textwidth]{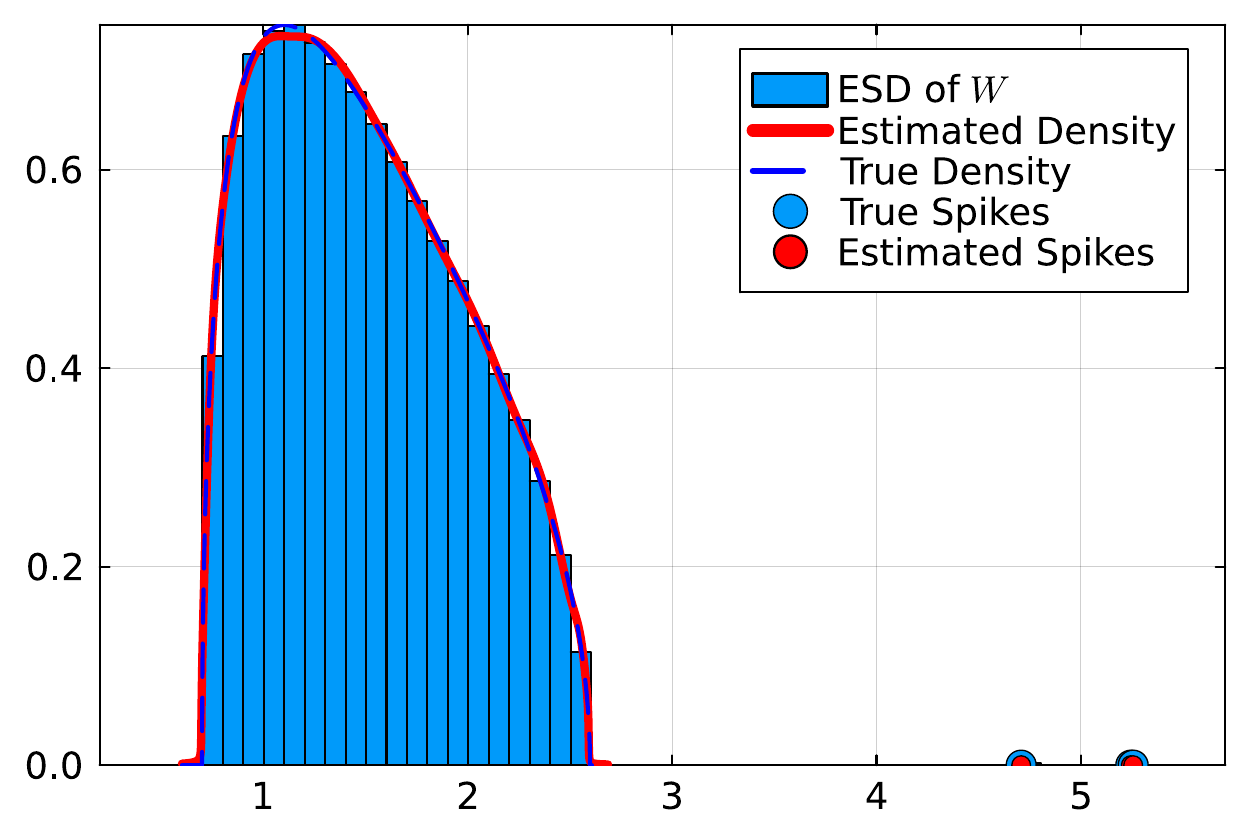}
    \end{subfigure}
    \hfill
    \begin{subfigure}[b]{0.45\textwidth}
        \centering
        \sisetup{table-number-alignment = center} 
        \raisebox{2.3cm}{
        \begin{tabular}{
            >{\centering\arraybackslash}p{3cm} 
            >{\centering\arraybackslash}p{3cm}}
            \toprule
            Estimated outliers & Error \\
            \midrule
            $5.25537$ & $1.50990e\!-\!14$\\
            $5.24390$ & $1.77635e\!-\!15$\\
            $4.70768$ & $2.04281e\!-\!14$\\
            \bottomrule
        \end{tabular}}
    \end{subfigure}
    \vspace{0.5cm} 
    \begin{subfigure}[b]{0.45\textwidth}
        \centering
        \includegraphics[width=\textwidth]{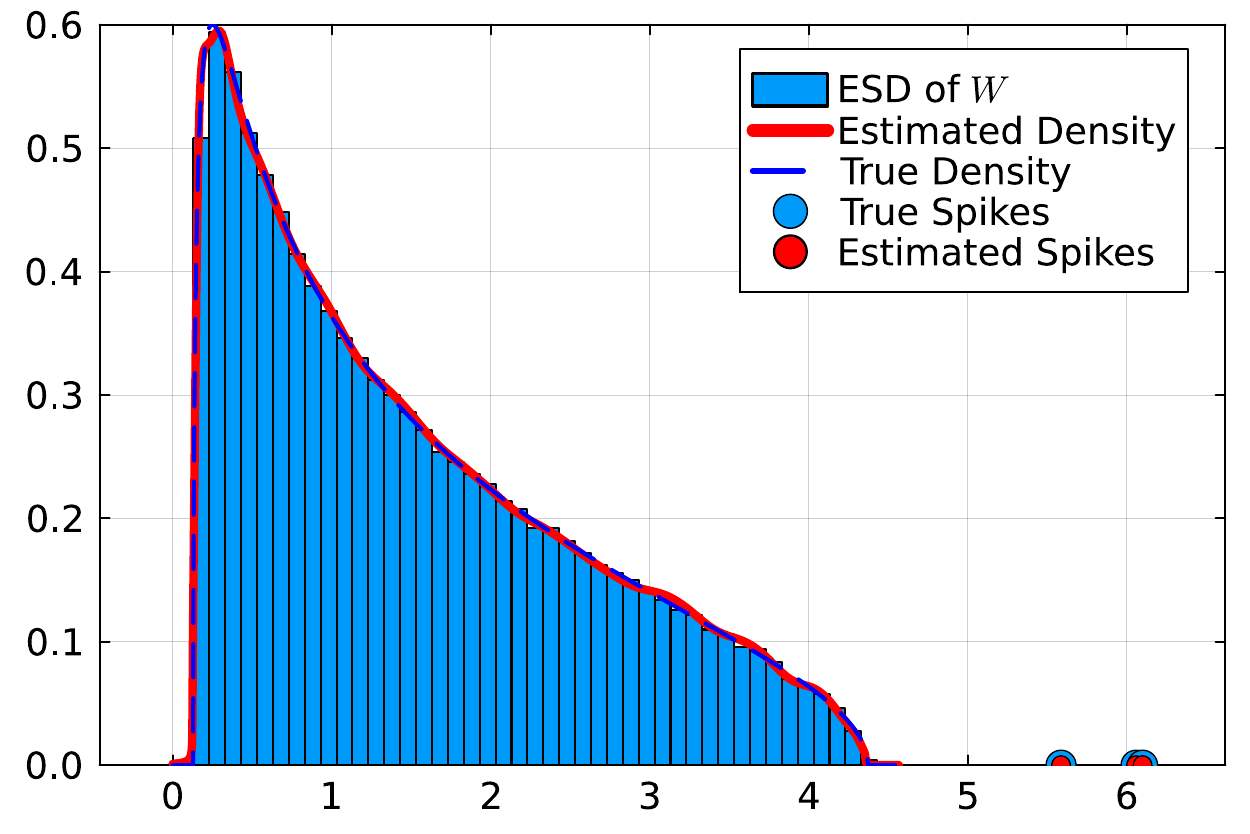}
    \end{subfigure}
    \hfill
    \begin{subfigure}[b]{0.45\textwidth}
        \centering
        \sisetup{table-number-alignment = center} 
        \raisebox{2.3cm}{
        \begin{tabular}{
            >{\centering\arraybackslash}p{3cm} 
            >{\centering\arraybackslash}p{3cm}}
            \toprule
            Estimated outliers & Error \\
            \midrule
            $6.09914$ & $8.88178e\!-\!16$\\
            $6.05833$ & $1.50990e\!-\!14$\\
            $5.58658$ & $1.24344e\!-\!14$\\
            \bottomrule
        \end{tabular}}
    \end{subfigure}
    \vspace{0.5cm} 
    \begin{subfigure}[b]{0.45\textwidth}
        \centering
        \includegraphics[width=\textwidth]{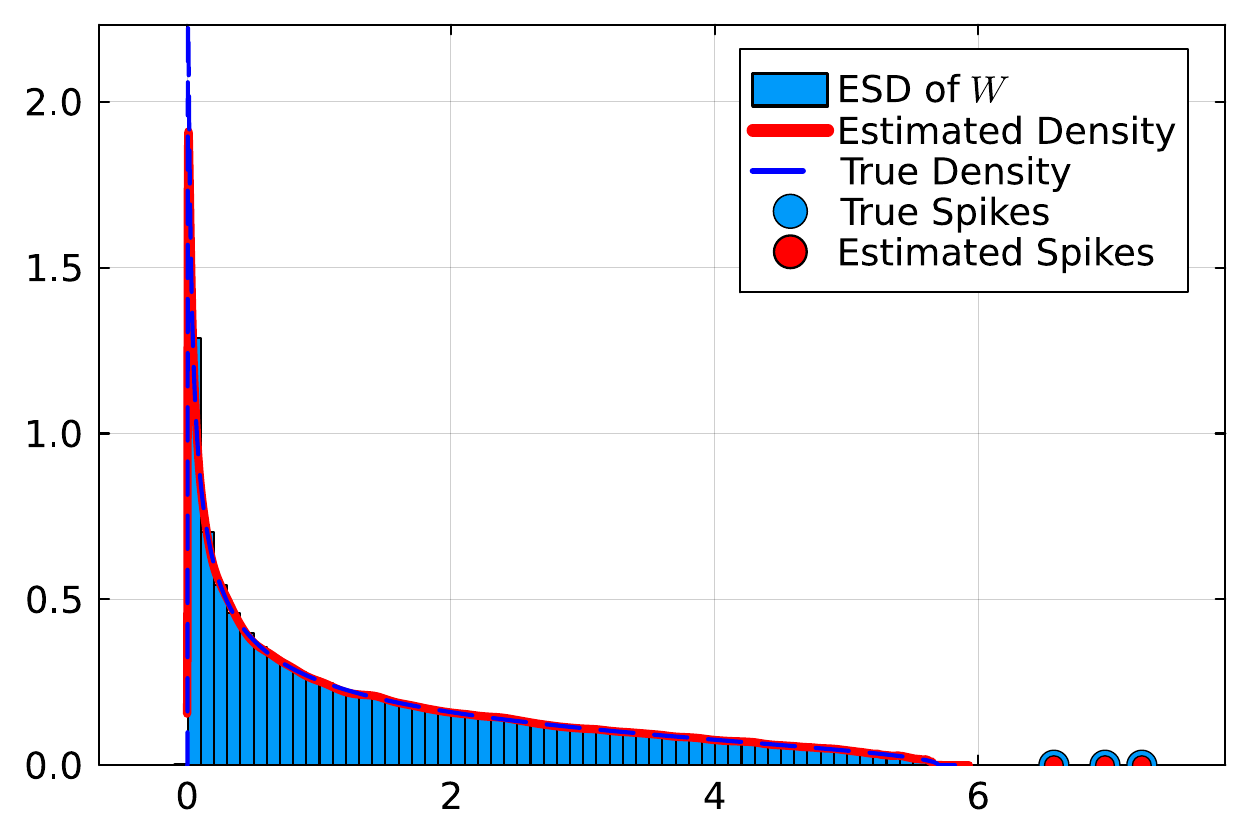}
    \end{subfigure}
    \hfill
    \begin{subfigure}[b]{0.45\textwidth}
        \centering
        \sisetup{table-number-alignment = center} 
        \raisebox{2.3cm}{
        \begin{tabular}{
            >{\centering\arraybackslash}p{3cm} 
            >{\centering\arraybackslash}p{3cm}}
            \toprule
            Estimated outliers & Error \\
            \midrule
            $7.23994$ & $8.88178e\!-\!15$\\
            $6.96060$ & $3.55271e\!-\!15$\\
            $6.57313$ & $6.21724e\!-\!15$\\
            \bottomrule
        \end{tabular}}
    \end{subfigure}
    \caption{The rows correspond to $c = 0.1$, $c = 0.5$, and $c = 0.9$, respectively. Left: Estimated outliers and ASD obtained using Algorithms~\ref{Ea:ESD} and~\ref{finaldetectionalgorithm} with $k = 100$ vectors. These estimates are compared to the ESD of $W$ from Simulation \ref{sim:Johnstone} and the MP density given in \eqref{eq:MPdens}. Right: Estimated outlier locations and the corresponding error between the estimates and the true outliers of $W$.}
    \label{fig:Sim1Density}
    \end{figure}

    The behavior of $W$ is analyzed for three values of $c$: $0.1$, $0.5$, and $0.9$, representing different regimes within the range $c \in (0,1)$. For each case, the estimated ASD and outliers of $W$ are visualized with $N = 5000$, and the number of vectors is chosen as $k = 100$ in Algorithms~\ref{Ea:ESD} and \ref{finaldetectionalgorithm}. Figure~\ref{fig:Sim1Density} shows that the approximate ASD closely matches the exact density, and the detected outliers align well with the true ones, with errors on the order of $10^{-14}$. 

    Next, we investigate the accuracy of Algorithm~\ref{Ea:ESD} in estimating both the density endpoints and the density itself, with the number of vectors $k$ fixed at 100. This time, we consider the three scenarios for $c$ while varying $N$ from $100$ to $8000$. For each value of $N$, the errors are averaged over 50 trials. In each trial, the error in the support is calculated as $\max\{|\gamma_{+}-\widehat{\gamma}_{+}|, |\gamma_{-}-\widehat{\gamma}_{-}|\}$, and the density error is determined as
    \begin{align*}
        \max_{x\in [\widehat{\gamma}_{-} + 0.2, \widehat{\gamma}_{+} - 0.2]} \abs{\frac{\widehat{\varrho}_0(x)}{\sqrt{\widehat \gamma_{+}-x}\sqrt{x-\widehat \gamma_{-}}} - \frac{\varrho_{\mathrm{MP}}(x)}{\sqrt{\gamma_{+}-x}\sqrt{x-\gamma_{-}}}},
    \end{align*}
    where $\widehat \gamma_{\pm}$ and $\widehat \varrho_0$ denote the estimated support and density, respectively. The results are shown in Figure~\ref{fig:Sim1Conv}. The convergence of the support is consistent with our theoretical findings, which predict a convergence rate of $N^{-1/2}$ with overwhelming probability for large $N$. We also observe that the error in estimating the support is greatest for $c = 0.9$, which can be attributed to the density being relatively small near the right endpoint. Moreover, the convergence of the density follows the optimal rate of $N^{-1/2}$, with the largest error occurring for $c = 0.1$.  The rigorous analysis of this will be addressed in future work.

    \begin{figure}[tbp]
        \centering
    \begin{subfigure}{0.48\textwidth}
        \centering
        \includegraphics[width=\textwidth]{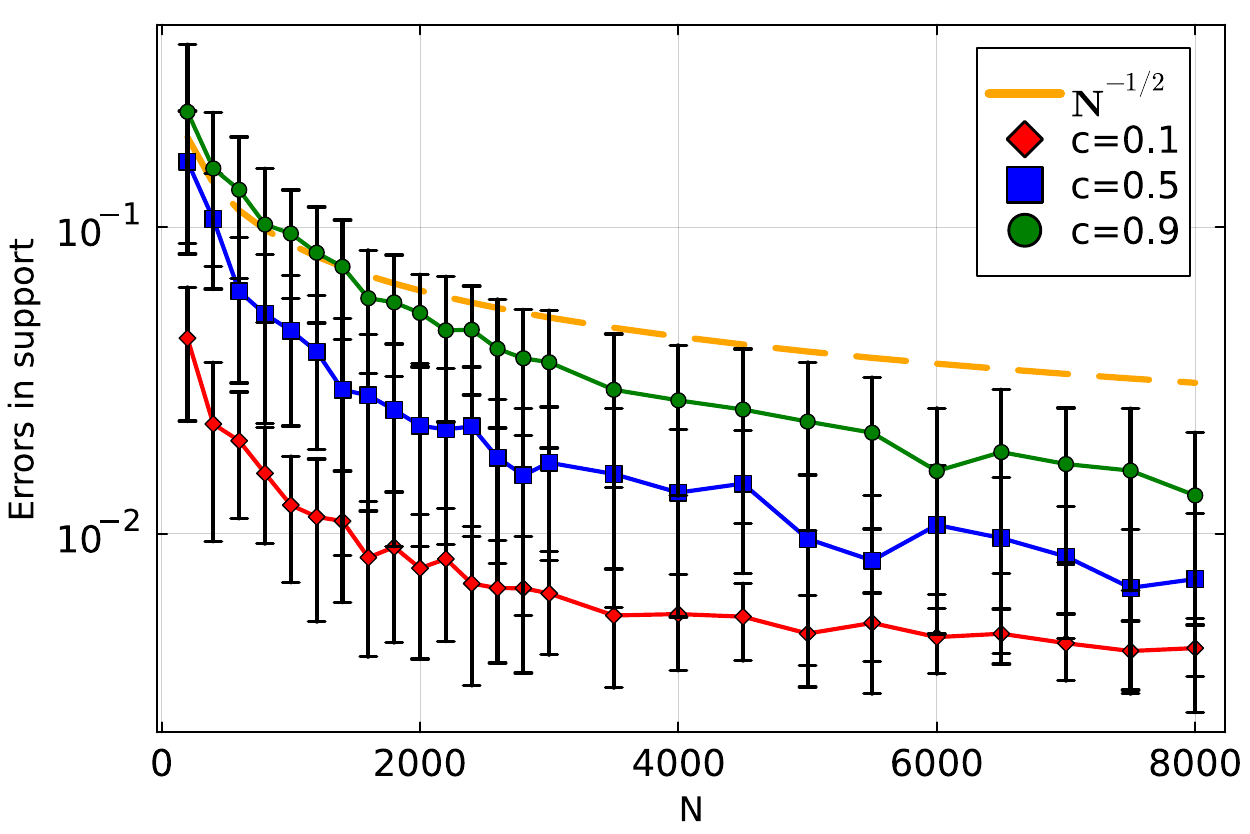}
    \end{subfigure}
    \begin{subfigure}{0.48\textwidth}
        \centering
        \includegraphics[width=\textwidth]{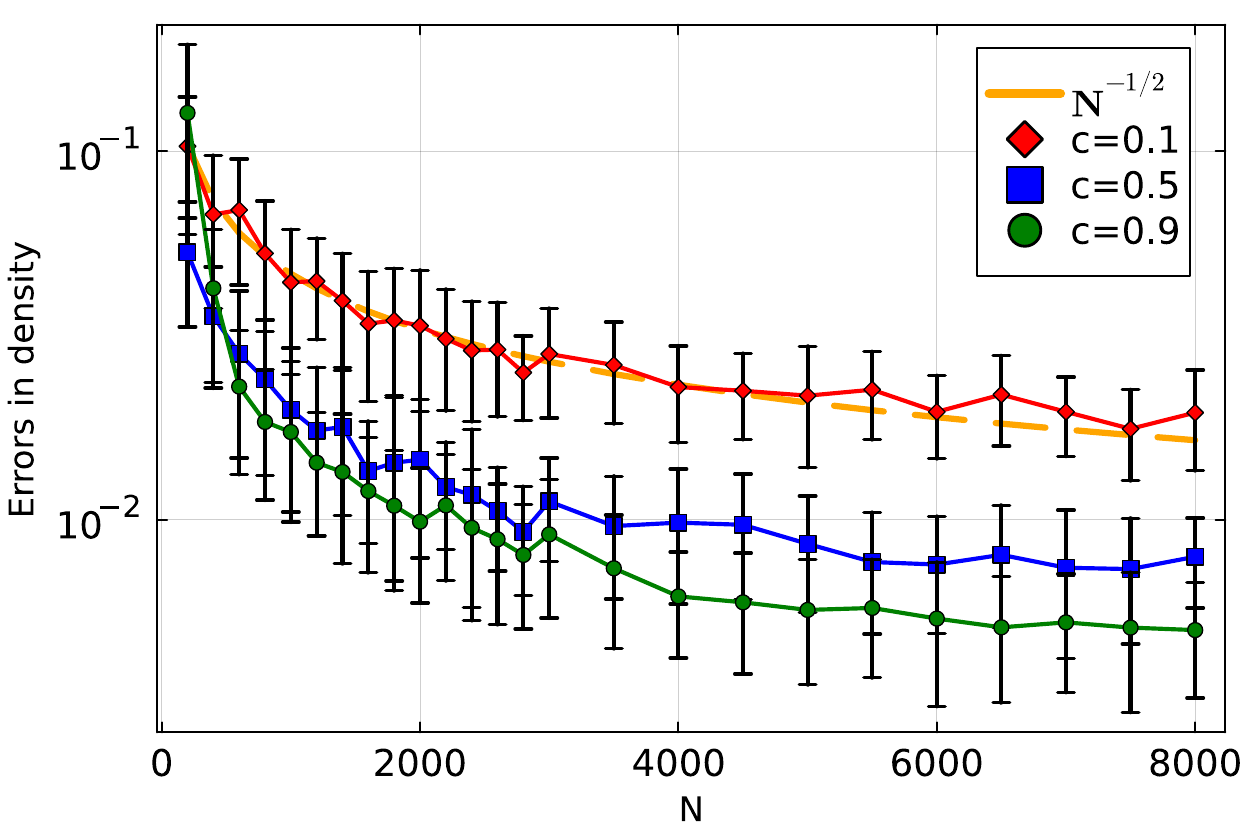}
    \end{subfigure}
    \caption{ Comparison of the estimated support and density from Algorithm~\ref{Ea:ESD} with $k=100$ against the true counterparts from the MP law. The plots illustrate the convergence of errors in Simulation~\ref{sim:Johnstone} for $c = 0.1$, $0.5$, and $0.9$ as $N$ increases. Errors are averaged over 50 trials, with each point representing the mean error and vertical error bars indicating the standard deviation.}
    \label{fig:Sim1Conv}
    \end{figure}

    Finally, we examine the accuracy of our method for spike detection. To highlight its effectiveness, we consider one vector ($k=1$) in Algorithm~\ref{finaldetectionalgorithm}. As before, we consider the three regimes $c = 0.1, 0.5, 0.9$ and vary $N$. For each value of $N$, we generate 50 samples of the covariance matrix $W$ and evaluate both the probability of correctly identifying the number of spikes and the average number of detected spikes. We also demonstrate the efficiency of our approach by comparing its average runtime\footnote{The timing tests in this section were done on a MacBook Pro running macOS Sequoia 15.7.1 with 10 cores and 16 GB of RAM with an Apple M4 chip.} with the time required to compute eigenvalues across the 50 samples. The results are presented in Figure~\ref{fig:Sim1Acc&Eff} and Table~\ref{tab:Sim1Avrg}. Our estimator for $\widehat{r}$ exhibits greater accuracy for $c = 0.1$ and $c = 0.5$ than for $c = 0.9$, which can be attributed to the larger gap between the spikes and the density endpoints. Nonetheless, the accuracy improves as $N$ increases and eventually reaches 1 in all cases. Furthermore, even when the probability of correctly estimating the number of spikes is low, the average value of $\widehat{r}$ remains relatively close to the true spike count. These results also highlight the efficiency of the detection algorithm, with its runtime remaining small as $N$ grows, in contrast to the substantial increase in the computational cost of eigenvalue computation.

    \begin{figure}[tbp]
        \centering
    \begin{subfigure}{0.48\textwidth}
        \centering
        \includegraphics[width=\textwidth]{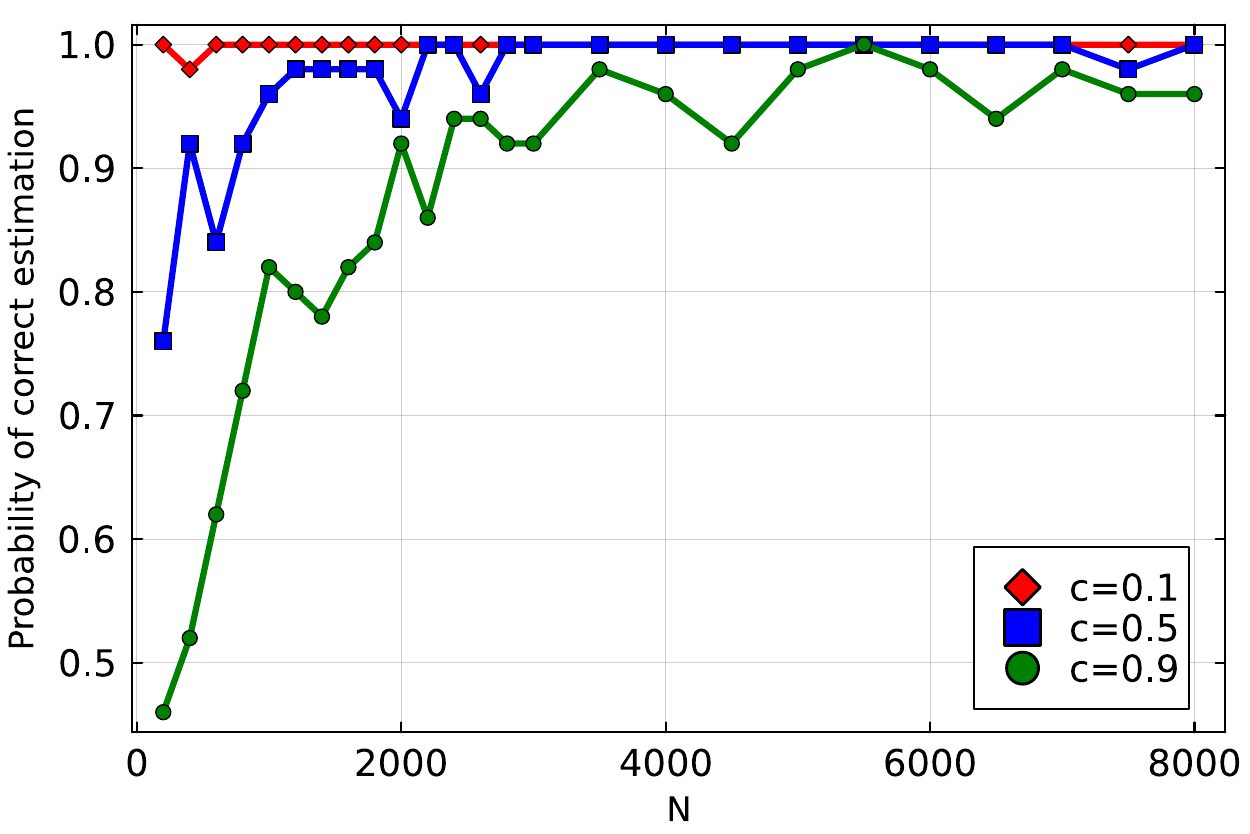}
    \end{subfigure}
    \begin{subfigure}{0.48\textwidth}
        \centering
        \includegraphics[width=\textwidth]{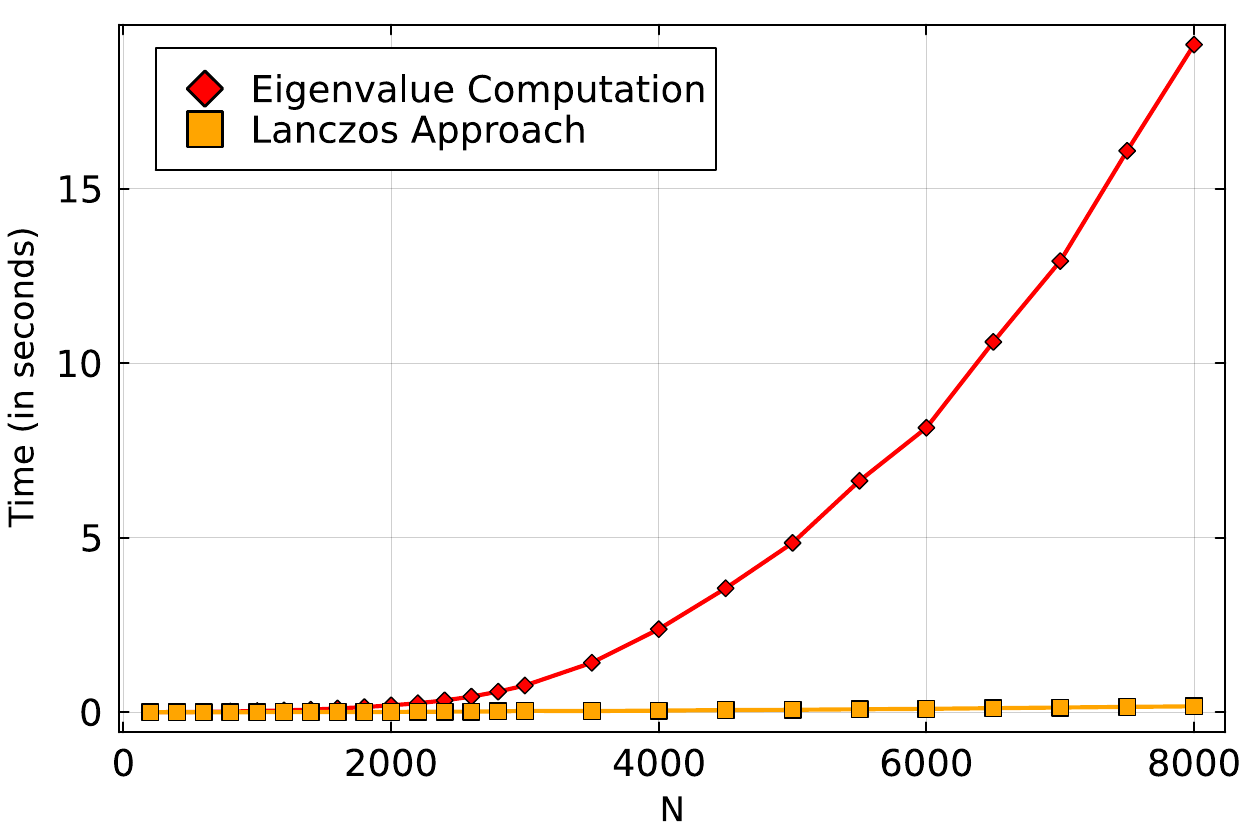}
    \end{subfigure}
    \caption{Left: Spike detection accuracy in Simulation~\ref{sim:Johnstone} for $c = 0.1$, $0.5$, and $0.9$ as $N$ varies, based on 50 sample realizations with the algorithm run using $k=1$. Right: Comparison of the average computation time for eigenvalue calculation and spike detection across the samples for $c=0.5$.}
    \label{fig:Sim1Acc&Eff}
    \end{figure}

    \begin{table}[tbp]
    \centering
    \sisetup{table-number-alignment = center} 
    \begin{tabular}{
        >{\centering\arraybackslash}p{2cm} 
        >{\centering\arraybackslash}p{3cm} 
        >{\centering\arraybackslash}p{3cm} 
        >{\centering\arraybackslash}p{3cm}}
        \toprule
        $N$ & $c=0.1$ & $c = 0.5$ & $c = 0.9$ \\
        \midrule
        $200$ & $3.00$ $(1.00)$ & $3.26$ $(0.76)$ & $3.20$ $(0.46)$ \\
        $2000$ & $3.00$ $(1.00)$ & $3.06$ $(0.94)$ & $3.10$ $(0.92)$ \\
        $4000$ & $3.00$ $(1.00)$ & $3.00$ $(1.00)$ & $3.04$ $(0.96)$ \\
        $6000$ & $3.00$ $(1.00)$ & $3.00$ $(1.00)$ & $3.02$ $(0.98)$ \\
        $8000$ & $3.00$ $(1.00)$ & $3.00$ $(1.00)$ & $3.04$ $(0.96)$ \\
        \bottomrule
    \end{tabular}
    \caption{Estimated number of spikes for $W$ in Simulation~\ref{sim:Johnstone} for different values of $c$ and $N$, using a single vector ($k=1$) in the detection algorithm. Each table entry represents the average over 50 samples, with the value in brackets denoting the probability of correctly detecting the true number of spikes.}
    \label{tab:Sim1Avrg}
    \end{table}
\end{simulation}

\begin{simulation}\label{sim:gap}
     This example examines the performance of Algorithm~\ref{finaldetectionalgorithm} as the gap between the right endpoint and the outliers varies. We compare our method, using different number of vectors, with alternative approaches such as BEMA0 and DDPA. Due to the large matrix dimensions considered in this example, Monte Carlo simulations based on eigenvalue computations become computationally prohibitive, restricting the methods to which we can compare.

    We analyze the standard sample covariance matrix $W$ as defined in \eqref{eq:Sim1Model}, with
    \begin{align}\label{eq:Sim2Sigma}
    \Sigma = \mathrm{diag}(6,5,\delta,\sigma^2,\dots,\sigma^2), \quad \sigma^2=1.
    \end{align}
    We set $N = 8000$ and $M = 16000$, with $X \in \mathbb{R}^{N \times M}$ consisting of iid standard normal entries \footnote{Similar results were obtained when the entries of $X$ followed a Rademacher distribution or a Beta distribution with shape parameters $\alpha = \beta = 1/2$.}. The parameter $\delta$ ranges from $1.5$ to $3$, crossing the BBP transition, where the number of spikes increases from $2$ to $3$ at $\delta = 1+\sqrt{c}$ with $c= N/M $. We evaluate the performance of Algorithm~\ref{finaldetectionalgorithm} with $k=1,50,100,200$, alongside BEMA0 and DDPA, in estimating the number of spikes across $50$ realizations of the sample covariance matrix. In addition, we examine the average number of detected spikes and compare the computational efficiency of each method based on their average runtime. The results are summarized in Figures~\ref{fig:Sim2AccNorm}, \ref{fig:Sim2TimeNorm} and Table~\ref{tab:Sim2Avrg}.

    \begin{figure}[tbp]
      \centering
      \includegraphics[width=\textwidth]{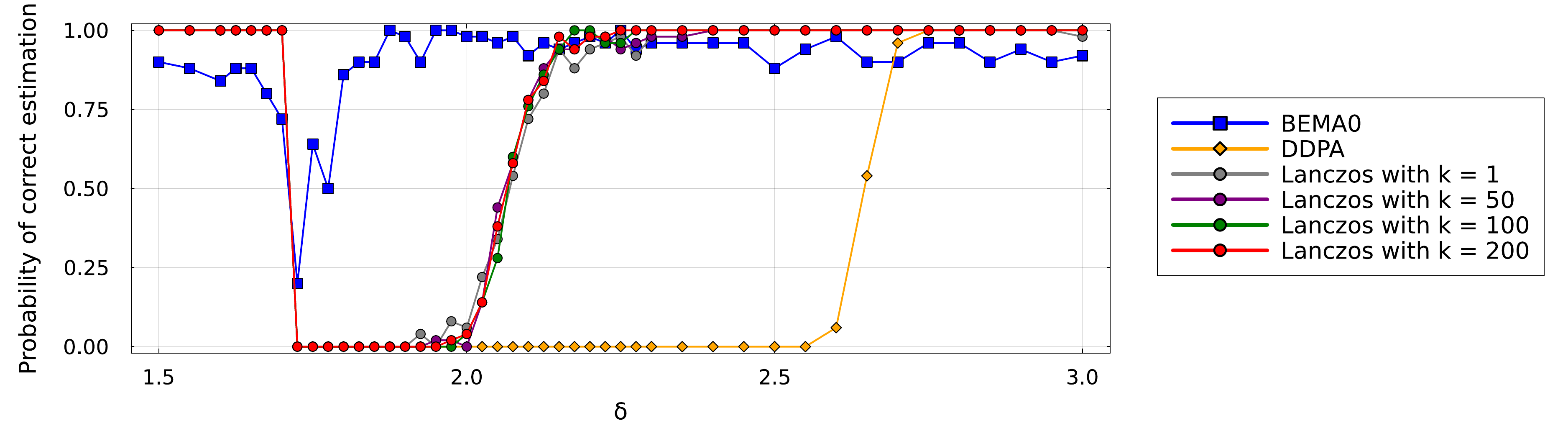}
      \caption{Spike detection accuracy in Simulation~\ref{sim:gap}, averaged over 50 trials for each value of $\delta$ as it varies from 1 to 3. During this process, the sample covariance matrix undergoes the BBP transition, increasing the number of spikes from 2 to 3. We compare the performance of Algorithm~\ref{finaldetectionalgorithm} with different numbers of vectors against the BEMA0 and DDPA methods.}
      \label{fig:Sim2AccNorm}
    \end{figure}

    \begin{figure}[tbp]
      \centering
      \includegraphics[width=\textwidth]{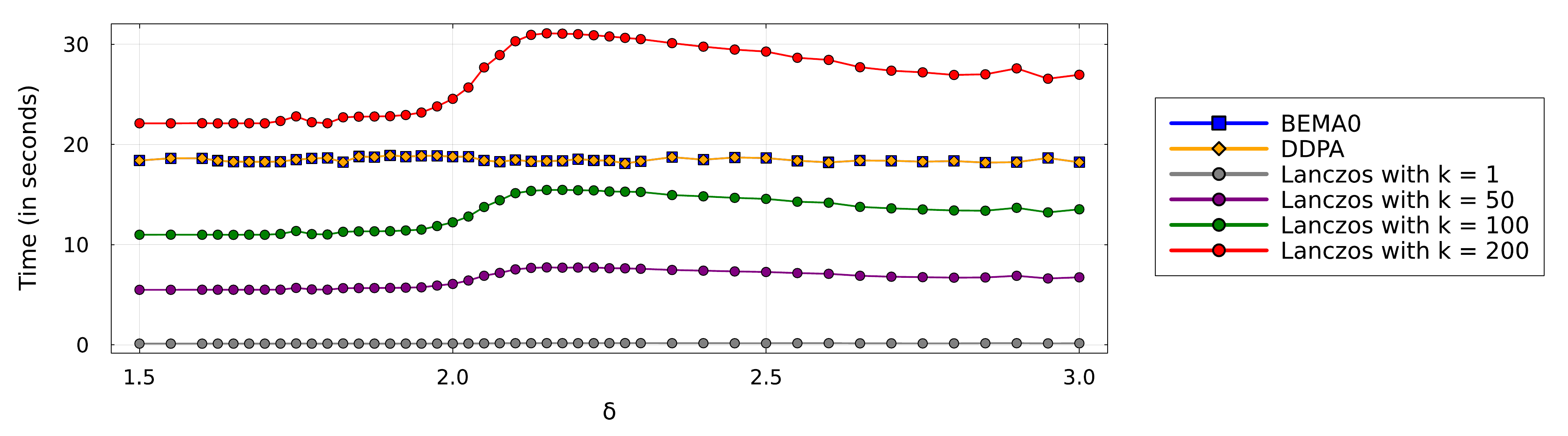}
      \caption{Comparison of the average computation time in Simulation~\ref{sim:gap}, measured over 50 trials, for Algorithm~\ref{finaldetectionalgorithm} with varying numbers of vectors, alongside BEMA0 and DDPA. The runtimes of BEMA0 and DDPA are nearly identical, as both are primarily determined by the cost of eigenvalue computation.}
      \label{fig:Sim2TimeNorm}
    \end{figure}

    Figure~\ref{fig:Sim2AccNorm} and Table~\ref{tab:Sim2Avrg} show that increasing $k$ has little effect on the accuracy of our estimator for the number of spikes, suggesting that it already exhibits low variance and minimal statistical noise. Furthermore, Algorithm~\ref{finaldetectionalgorithm} and DDPA struggle to accurately detect the correct number of spikes at the BBP transition, whereas BEMA0 performs better in this regime. However, our detection algorithm improves in accuracy beyond the transition, outperforming DDPA.

    Figure~\ref{fig:Sim2TimeNorm} further demonstrates that Algorithm~\ref{finaldetectionalgorithm} is significantly more efficient than BEMA0 and DDPA when $k=1$. Although its runtime increases with $k$, it remains comparable to the other two methods. Moreover, BEMA0 and DDPA exhibit nearly identical runtimes, as both are computationally inexpensive, with their cost primarily dictated by the eigenvalue computation.

    \begin{table}[tbp]
    \centering
    \sisetup{table-number-alignment = center} 
    \begin{tabular}{
        >{\centering\arraybackslash}p{2cm} 
        >{\centering\arraybackslash}p{2cm} 
        >{\centering\arraybackslash}p{2cm} 
        >{\centering\arraybackslash}p{2cm}
        >{\centering\arraybackslash}p{2cm}
        >{\centering\arraybackslash}p{2cm}
        >{\centering\arraybackslash}p{2cm}}
        \toprule
        \vspace{0.18cm} $\delta$ & Lanczos \textcolor{white}{.} with \textcolor{white}{.} $k=1$ & Lanczos \textcolor{white}{.} with \textcolor{white}{.} $k=50$ & Lanczos with \textcolor{white}{.} $k=100$ & Lanczos with $k=200$ & \vspace{0.18cm} BEMA0 & \vspace{0.18cm} DDPA\\
        \midrule
        $1.50$ & $2.00$ $(1.00)$ & $2.00$ $(1.00)$ & $2.00$ $(1.00)$ & $2.00$ $(1.00)$ & $2.10$ $(0.90)$ & $2.00$ $(1.00)$ \\
        $1.75$ & $2.00$ $(0.00)$ & $2.00$ $(0.00)$ & $2.00$ $(0.00)$ & $2.00$ $(0.00)$ & $2.64$ $(0.64)$ & $2.00$ $(0.00)$ \\
        $1.90$ & $2.00$ $(0.00)$ & $2.00$ $(0.00)$ & $2.00$ $(0.00)$ & $2.00$ $(0.00)$ & $3.02$ $(0.98)$ & $2.00$ $(0.00)$ \\
        $2.00$ & $2.06$ $(0.06)$ & $2.00$ $(0.00)$ & $2.04$ $(0.04)$ & $2.04$ $(0.04)$ & $3.02$ $(0.98)$ & $2.00$ $(0.00)$ \\
        $2.10$ & $2.72$ $(0.72)$ & $2.78$ $(0.78)$ & $2.76$ $(0.76)$ & $2.78$ $(0.78)$ & $3.08$ $(0.92)$ & $2.00$ $(0.00)$ \\
        $2.25$ & $2.98$ $(0.98)$ & $2.94$ $(0.94)$ & $2.96$ $(0.96)$ & $3.00$ $(1.00)$ & $3.00$ $(1.00)$ & $2.00$ $(0.00)$ \\
        $2.50$ & $3.00$ $(1.00)$ & $3.00$ $(1.00)$ & $3.00$ $(1.00)$ & $3.00$ $(1.00)$ & $3.12$ $(0.88)$ & $2.00$ $(0.00)$ \\
        $2.75$ & $3.00$ $(1.00)$ & $3.00$ $(1.00)$ & $3.00$ $(1.00)$ & $3.00$ $(1.00)$& $3.04$ $(0.96)$ & $3.00$ $(1.00)$\\
        \bottomrule
    \end{tabular}
    \caption{Estimated number of spikes for $W$ in Simulation~\ref{sim:gap} as $\delta$ crosses the BBP transition, increasing the number of spikes from $2$ to $3$. We compare Algorithm~\ref{finaldetectionalgorithm} with different numbers of vectors against BEMA0 and DDPA. Each table entry represents the average over 50 samples, with the value in brackets indicating the probability of correctly identifying the true number of spikes.}
    \label{tab:Sim2Avrg}
    \end{table}
\end{simulation}

\begin{simulation}\label{sim:DetSigma}
    This example examines the performance of our spike detection algorithm for a deterministic covariance matrix $\Sigma$, whose eigenvalues follow a nontrivial deterministic density. We compare Algorithm~\ref{finaldetectionalgorithm} with $k=1$ against other methods, including BEMA0, BEMA, DDPA, and Pass\&Yao.

    \begin{figure}[tbp]
        \centering
        \begin{subfigure}[b]{0.48\textwidth}
            \centering
            \includegraphics[width=\textwidth]{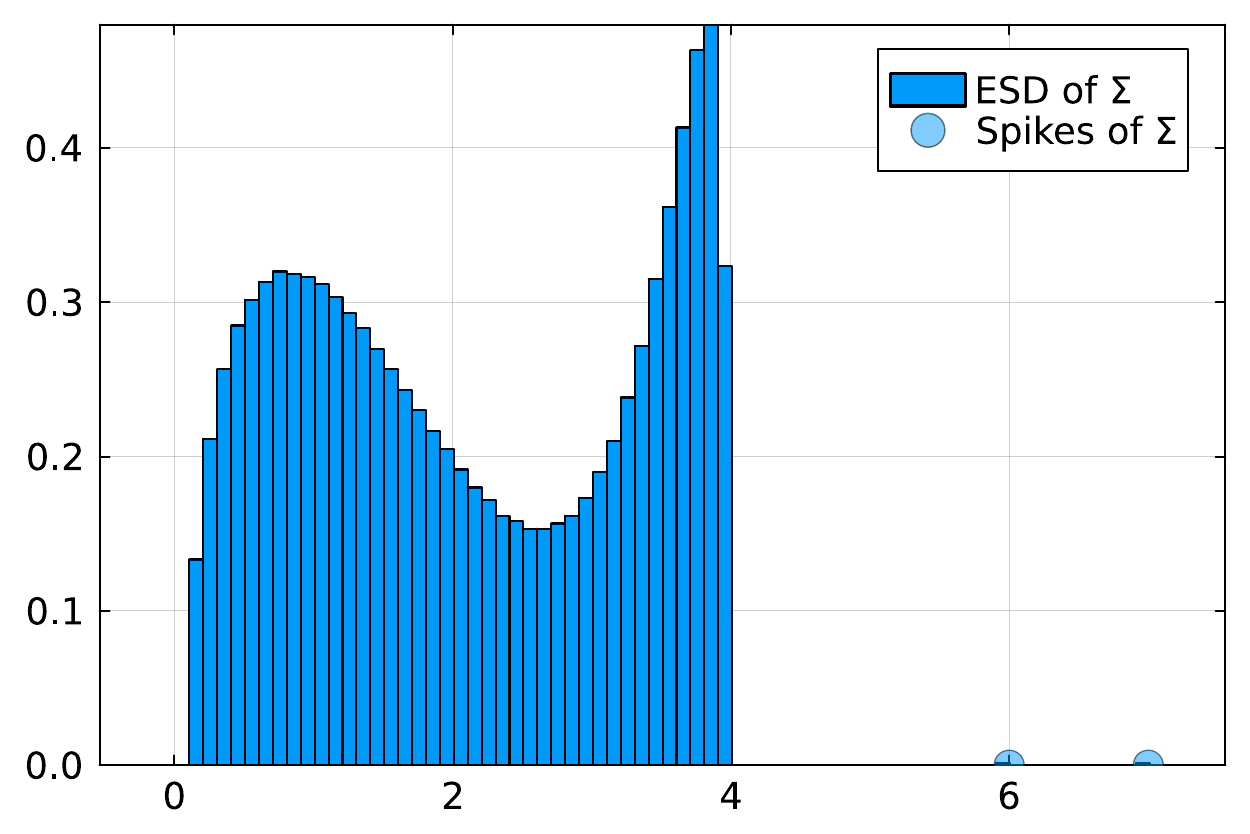}
        \end{subfigure}
        \hfill
        \begin{subfigure}[b]{0.48\textwidth}
            \centering
            \includegraphics[width=\textwidth]{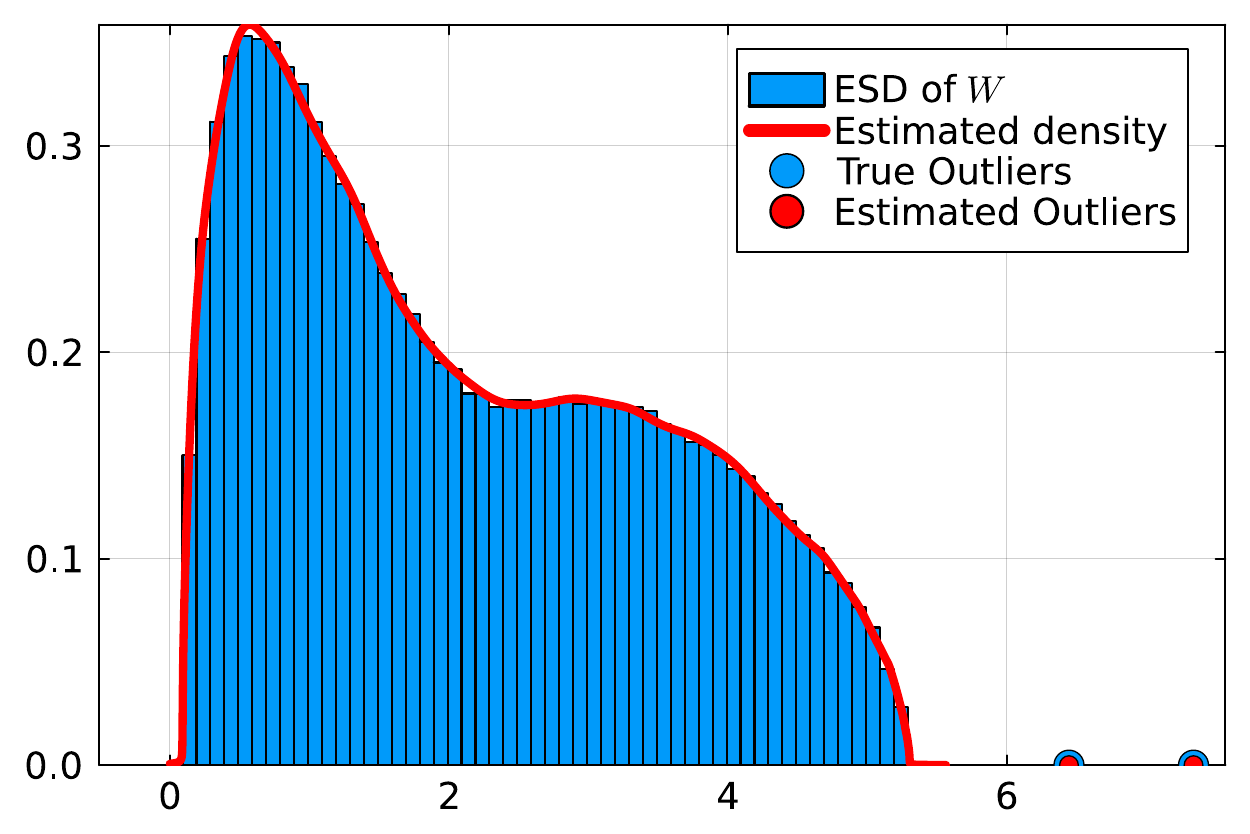}
        \end{subfigure}
        \caption{Left: The ESD of $\Sigma$, where $\Sigma$ is an $N \times N$ diagonal matrix with $N = 3000$ and entries given by the quantiles of the density defined in \eqref{eq:sim3SigmaDens}. The first two diagonal entries are modified to 7 and 6 forming the spikes of $\Sigma$. Right: The ESD of $W$ defined in Simulation~\ref{sim:DetSigma}. The ESD is compared against the estimated outliers, their locations, and the approximate ASD obtained using Algorithms~\ref{Ea:ESD} and~\ref{finaldetectionalgorithm} with $k = 100$.}
        \label{fig:Sim3Dens}
    \end{figure}
    
    We consider the sample covariance matrix $W$ under the spiked covariance model in \eqref{eq:SCM_Model}, where $N=3000$, $M=30000$ and $\sqrt{M} X \in \mathbb{R}^{N \times M}$ has iid standard Gaussian entries. The covariance matrix $\Sigma\in \R^{N\times N}$ is diagonal, and its eigenvalues follow a deterministic distribution consisting of a bulk component and additional spikes. The bulk density is expressed as
    \begin{align}\label{eq:sim3SigmaDens}
        \mu(x) = \frac{1}{K}\frac{(2(3.5-x)^3+x)}{(4.5-x)^2}\sqrt{4-x}\sqrt{x-0.1},
    \end{align}
    where $K$ is the normalization constant, and the spikes of $\Sigma$ are located at $7, \delta$. See Figure~\ref{fig:Sim3Dens} for a visualization of the ESD of $\Sigma$ and the ASD of $W$ when $\delta = 6$. We approximate the ASD and detect the outliers of $W$ using Algorithms~\ref{Ea:ESD} and \ref{finaldetectionalgorithm} with $k=100$ vectors. Our ASD estimate closely matches the empirical distribution, accurately identifying both the location and number of spikes.

    Next, we vary the parameter $\delta = 5, 7, 9, 21$ and evaluate the performance of our detection algorithm with a single vector ($k = 1$), comparing it to other spike detection methods. For each value of $\delta$, we perform the evaluation over 50 samples. Specifically, we examine the accuracy, the average number of spikes detected, and the runtime for each method. The results are summarized in Table~\ref{tab:Sim3Res}. 
    
    Our algorithm outperforms the other methods, as it consistently captures the correct number of spikes with a probability close to 1. DDPA performs second best, with its performance improving as $\delta$ increases, reaching a probability of 1 when $\delta = 9,21$. This demonstrates the versatility of our approach, as it can handle a wide range of covariance matrices without assuming any specific distribution or structure for $\Sigma$. Additionally, we observe that Algorithm~\ref{finaldetectionalgorithm} achieves the shortest runtime, followed by BEMA0 and DDPA, which have similar runtimes as their costs are mainly dominated by the eigenvalue computations. BEMA and Pass\&Yao are more computationally demanding, as they rely on Monte Carlo simulations with large matrices.

    \begin{table}[tbp]
    \centering
    \sisetup{table-number-alignment = center} 
    \begin{tabular}{
        >{\centering\arraybackslash}p{2cm} 
        >{\centering\arraybackslash}p{2cm} 
        >{\centering\arraybackslash}p{2cm} 
        >{\centering\arraybackslash}p{2cm}
        >{\centering\arraybackslash}p{2cm}
        >{\centering\arraybackslash}p{2cm}}
        \toprule
        \vspace{0.02cm}$\delta=5$ & \vspace{0.02cm} BEMA0 & \vspace{0.02cm} BEMA & \vspace{0.02cm} Pass\&Yao & \vspace{0.02cm} DDPA & Lanczos with $k=1$ \\
        \midrule
        Probability & $0.00$ & $0.00$ & $0.50$ & $0.00$ & $0.88$ \\
        \midrule
        Average & $553.66$ & $0.00$ & $2.86$ & $1.00$ & $1.88$ \\
        \midrule
        Time (sec) & $0.794847$ & $288.6114$ & $227.1758$ & $0.794932$ & $0.035953$ \\
        \bottomrule
    \end{tabular}
    \begin{tabular}{
        >{\centering\arraybackslash}p{2cm} 
        >{\centering\arraybackslash}p{2cm} 
        >{\centering\arraybackslash}p{2cm} 
        >{\centering\arraybackslash}p{2cm}
        >{\centering\arraybackslash}p{2cm}
        >{\centering\arraybackslash}p{2cm}}
        \toprule
        \vspace{0.02cm}$\delta=7$ & \vspace{0.02cm} BEMA0 & \vspace{0.02cm} BEMA & \vspace{0.02cm} Pass\&Yao & \vspace{0.02cm} DDPA & Lanczos with $k=1$ \\
        \midrule
        Probability & $0.00$ & $0.00$ & $0.36$ & $0.76$ & $1.00$ \\
        \midrule
        Average & $553.68$ & $0.00$ & $3.18$ & $1.52$ & $2.00$ \\
        \midrule
        Time (sec) & $0.797435$ & $288.3089$ & $245.7479$ & $0.797535$ & $0.033605$ \\
        \bottomrule
    \end{tabular}
    \begin{tabular}{
        >{\centering\arraybackslash}p{2cm} 
        >{\centering\arraybackslash}p{2cm} 
        >{\centering\arraybackslash}p{2cm} 
        >{\centering\arraybackslash}p{2cm}
        >{\centering\arraybackslash}p{2cm}
        >{\centering\arraybackslash}p{2cm}}
        \toprule
        \vspace{0.02cm}$\delta=9$ & \vspace{0.02cm} BEMA0 & \vspace{0.02cm} BEMA & \vspace{0.02cm} Pass\&Yao & \vspace{0.02cm} DDPA & Lanczos with $k=1$ \\
        \midrule
        Probability & $0.00$ & $0.00$ & $0.40$ & $1.00$ & $0.98$ \\
        \midrule
        Average & $553.74$ & $0.00$ & $3.04$ & $2.00$ & $2.02$ \\
        \midrule
        Time (sec) & $0.784069$ & $288.4510$ & $238.0594$ & $0.784192$ & $0.032017$ \\
        \bottomrule
    \end{tabular}
    \begin{tabular}{
        >{\centering\arraybackslash}p{2cm} 
        >{\centering\arraybackslash}p{2cm} 
        >{\centering\arraybackslash}p{2cm} 
        >{\centering\arraybackslash}p{2cm}
        >{\centering\arraybackslash}p{2cm}
        >{\centering\arraybackslash}p{2cm}}
        \toprule
        \vspace{0.02cm}$\delta=21$ & \vspace{0.02cm} BEMA0 & \vspace{0.02cm} BEMA & \vspace{0.02cm} Pass\&Yao & \vspace{0.02cm} DDPA & Lanczos with $k=1$ \\
        \midrule
        Probability & $0.00$ & $0.00$ & $0.34$ & $1.00$ & $1.00$ \\
        \midrule
        Average & $553.74$ & $1.00$ & $3.16$ & $2.00$ & $2.00$ \\
        \midrule
        Time (sec) & $0.779923$ & $290.1505$ & $246.6577$ & $0.779997$ & $0.028166$ \\
        \bottomrule
    \end{tabular}
    \caption{Probability of correct estimation, average number of spikes detected, and runtime for different methods in Simulation~\ref{sim:DetSigma}, evaluated over 50 samples for each value of $\delta$. }
    \label{tab:Sim3Res}
    \end{table}
\end{simulation}

\section*{Acknowledgements}

This material is based upon work supported by NSF DMS-2306438 (TT), DMS-2306439 (XD). Any opinions, findings, and conclusions or recommendations expressed in this material are those of the author and do not necessarily reflect the views of the National Science Foundation.

\appendix

\section{Linear Algebra Identities and Algorithms}

\subsection{The Lanczos and Cholesky algorithms}\label{sec:lanczos}\label{sec:cholesky}
For the sake of completeness, we give the full Lanczos algorithm (Algorithm \ref{a:lanczos}) and present the algorithm to compute the Cholesky decomposition of a Jacobi matrix (Algorithm~\ref{a:Chol}).

\newcounter{basic-algorithm}
\renewcommand{\thealgorithm}{B.\arabic{basic-algorithm}}
\setcounter{basic-algorithm}{0}
\begin{algorithm}[tbp]
    \refstepcounter{basic-algorithm}
	\caption{Lanczos Algorithm}
    \label{a:lanczos}
    \begin{algorithmic}[1]
        \Statex \textbf{Input:} Hermitian matrix $W\in \mathbb{R}^{N\times N}$, initial vector $q_1$ such that $\|q_1\|^2 = q_1^* q_1 = 1$.
        \State Set $b_{-1} = 0$ and $q_0 = 0$.
        \For{$j=1,2,\dots,n, \quad n\leq N$} 
            \State Compute $a_{j-1} = (W q_j - b_{j-2} q_{j-1})^* q_j$.
            \State Set $v_j = W q_j - a_{j-1} q_j - b_{j-2} q_{j-1}$.
            \State Compute $b_{j-1} = \|v_j\|$.
            \If{$b_{j-1} = 0$}
                \State \Return{$a_0,\dots,a_{j-1},b_0,\dots,b_{j-1}$.}
            \Else
                \State Set $q_{j+1} = v_j / b_{j-1}$.
            \EndIf
        \EndFor
        \State \Return{$a_0,\dots,a_{n-1},b_0,\dots,b_{n-1}$.}
    \end{algorithmic}
\end{algorithm}


\begin{algorithm}[tbp]
    \refstepcounter{basic-algorithm}
    \caption{Cholesky Algorithm}\label{a:Chol}
    \begin{algorithmic}[1]
        \Statex \textbf{Input:} Jacobi matrix $T\in \mathbb{R}^{N\times N}$ generated from Algorithm~\ref{a:lanczos}.
        \State Set $L = T$.
        \For{$i=1,2,\ldots,N$}
            \State Set $L_{i,i+1} = 0$.
            \State Set $L_{i+1,i+1} = L_{i+1,i+1} - L_{i+1,i} L_{i,i+1} / L_{ii}$.
            \State Set $L_{i:i+1,i} = L_{i:i+1,i} / \sqrt{L_{ii}}$.
        \EndFor
        \State \Return $L$.
    \end{algorithmic}
\end{algorithm}

\subsection{Matrix Identities}\label{sec:matrix}

We emphasize two well-known identities that are used in a key way.
\begin{lemma}[Schur Complement]\label{lem:SchurCompl}
    Suppose $p,q$ are nonnegative integers such that $p+q>0$, and suppose $A,B,C,D$ are respectively $p\times p$, $p\times q$, $q\times p$, $q\times q$ matrices of complex numbers. Let 
    \begin{align*}
        M = \begin{bmatrix}
            A & B \\
            C & D
        \end{bmatrix}
    \end{align*}
    so that $M$ is a $(p+q)\times (p+q)$ matrix. If $D$ and $M$ are invertible then the upper left $q\times q$ block of $M^{-1}$ is given by $(M/D)^{-1}$, where the Schur complement of the block $D$ is given by
    \begin{align*}
        M/D := A-BD^{-1}C.
    \end{align*}
\end{lemma}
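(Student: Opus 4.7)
\medskip

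\noindent\textbf{Proof plan for Lemma \ref{lem:SchurCompl}.}

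The plan is to establish the identity via an explicit block LDU factorization of $M$, read off the $(1,1)$ block of $M^{-1}$, and verify that invertibility of $M$ (together with invertibility of $D$) forces the Schur complement $M/D = A - BD^{-1}C$ to be invertible, so that the expression $(M/D)^{-1}$ is meaningful.

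First, since $D$ is invertible, I would write down the block factorization
\begin{equation*}
M \;=\; \begin{bmatrix} I_p & BD^{-1} \\ 0 & I_q \end{bmatrix}
\begin{bmatrix} A - BD^{-1}C & 0 \\ 0 & D \end{bmatrix}
\begin{bmatrix} I_p & 0 \\ D^{-1}C & I_q \end{bmatrix},
\end{equation*}
and verify it by direct block multiplication (the $(1,1)$ entry reproduces $(A - BD^{-1}C) + BD^{-1} \cdot D \cdot D^{-1}C \cdot I_p = A$ after a short computation, and similarly for the other blocks). The two triangular factors are invertible with explicit unipotent inverses, obtained by negating the single off-diagonal block.

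Second, taking determinants of the factorization and using that the two unipotent factors have unit determinant gives $\det M = \det(A - BD^{-1}C)\,\det D$. Combined with $\det M \neq 0$ and $\det D \neq 0$, this yields $\det(A - BD^{-1}C) \neq 0$, so $M/D$ is invertible. This is the only point where one must be mildly careful --- the hypothesis is that $M$ is invertible, and one needs to deduce invertibility of the Schur complement before writing $(M/D)^{-1}$.

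Finally, I would invert the factorization (reversing the order of factors and inverting each), obtaining
\begin{equation*}
M^{-1} \;=\; \begin{bmatrix} I_p & 0 \\ -D^{-1}C & I_q \end{bmatrix}
\begin{bmatrix} (A - BD^{-1}C)^{-1} & 0 \\ 0 & D^{-1} \end{bmatrix}
\begin{bmatrix} I_p & -BD^{-1} \\ 0 & I_q \end{bmatrix},
\end{equation*}
and multiplying out the three blocks to read off the top-left $p \times p$ block as $(A - BD^{-1}C)^{-1} = (M/D)^{-1}$, which is the claim (modulo what appears to be a minor typo in the statement, where the upper-left block of $M^{-1}$ is in fact $p\times p$ rather than $q\times q$, since $A$ is $p\times p$). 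There is no substantive obstacle: the proof is purely algebraic verification, with the only nontrivial observation being the determinant argument that promotes invertibility of $M$ and $D$ to invertibility of $M/D$.
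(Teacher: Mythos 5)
Your proof is correct and complete. The paper states this Schur complement identity as a well-known linear-algebra fact without proof (it appears in Appendix A.2 alongside the Woodbury identity), so there is no argument in the paper to compare against; the block LDU factorization you use is the standard and cleanest way to establish it. You are also right about the typo: since $A$ is $p\times p$, the upper-left block of $M^{-1}$ is $p\times p$, not $q\times q$. One small but worthwhile point in your favor is that you did not take the invertibility of $M/D$ for granted --- the determinant argument $\det M = \det(M/D)\det D$ is exactly what is needed to make the expression $(M/D)^{-1}$ legitimate under the stated hypotheses, and many informal presentations skip this.
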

\begin{lemma}[Woodbury Matrix Identity]\label{lem:Woodbury}
    Let $A,U,C$ and $V$ be conformable matrices then
    \begin{align*}
        (A+UCV)^{-1} = A^{-1}-A^{-1}U(C^{-1}+VA^{-1}U)^{-1}VA^{-1},
    \end{align*}
    whenever the requisite inverses exist.
\end{lemma}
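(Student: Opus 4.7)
The plan is to verify the identity by direct multiplication, which is the most transparent route and avoids any appeal to auxiliary constructions. Set
\[
    B := A^{-1} - A^{-1}U(C^{-1} + VA^{-1}U)^{-1}VA^{-1},
\]
and the goal is to show $(A+UCV)B = I$; the identity $B(A+UCV)=I$ then follows by an analogous computation, or by observing that a right inverse of a square matrix is automatically its two-sided inverse.

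First I would expand $(A+UCV)B$ distributively into four terms, noting that $A\cdot A^{-1} = I$ and $UCV\cdot A^{-1}$ simplifies immediately. This yields
\[
    (A+UCV)B = I + UCVA^{-1} - U(C^{-1}+VA^{-1}U)^{-1}VA^{-1} - UCVA^{-1}U(C^{-1}+VA^{-1}U)^{-1}VA^{-1}.
\]
The key algebraic step is to factor $U(\,\cdot\,)VA^{-1}$ out of the last two terms, leaving the bracket $[I + CVA^{-1}U]$ multiplied by the inverse $(C^{-1}+VA^{-1}U)^{-1}$. One then rewrites $I + CVA^{-1}U = C(C^{-1}+VA^{-1}U)$, at which point the inverse cancels and produces $UCVA^{-1}$, exactly offsetting the second term on the right and leaving $I$.

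There is no real obstacle here: the only prerequisites are the stated invertibility of $A$, $C$, and $C^{-1}+VA^{-1}U$, which are explicitly assumed in the statement. These guarantee that every inverse appearing in the computation is well defined and that the dimensions are compatible. An equivalent derivation, which I would mention in passing for completeness, comes from computing the $(1,1)$-block of the inverse of the block matrix $\bigl[\begin{smallmatrix} A & -U \\ V & C^{-1}\end{smallmatrix}\bigr]$ in two ways using the Schur complement (Lemma~\ref{lem:SchurCompl}): taking the Schur complement with respect to $C^{-1}$ gives $(A+UCV)^{-1}$, while taking it with respect to $A$ gives the right-hand side of the Woodbury formula. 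Since the paper already invokes the Schur complement prominently, this alternative proof is notationally natural, but the direct verification above is shorter and self-contained.
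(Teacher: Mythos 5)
The paper states this lemma without proof, citing it as a standard matrix identity alongside the Schur complement in an appendix of linear algebra facts; so there is no paper proof to compare against. Your direct verification is correct: the expansion of $(A+UCV)B$ into four terms, the factoring of $U(\cdot)VA^{-1}$ from the last two, and the rewriting $I + CVA^{-1}U = C(C^{-1}+VA^{-1}U)$ all check out, and the cancellation leaves $I$ as claimed. Your remark that a right inverse of a square matrix is automatically two-sided is the right way to dispatch the other direction, and the aside about the Schur-complement derivation — computing the $(1,1)$-block of the inverse of $\bigl[\begin{smallmatrix} A & -U \\ V & C^{-1}\end{smallmatrix}\bigr]$ by complementing with respect to $C^{-1}$ versus $A$ — is a clean alternative that fits especially well here, given that Lemma~\ref{lem:SchurCompl} is stated immediately before. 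Both arguments use exactly the hypotheses in the statement (invertibility of $A$, $C$, and $C^{-1}+VA^{-1}U$), so no gap.
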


\section{Proofs of some techinal lemmas}\label{sec_appendixxuxilimarylemmasproof}

\begin{proof}[\bf Proof of Lemma \ref{lem:SpikedLocLaw}]
    First, we expand the vectors $\mathbf{u},\mathbf{v}$ as $\mathbf{u} = \sum_{i=1}^r {u}_i\mathbf{v}_i + \widehat{\mathbf{u}}$ and $\mathbf{v} = \sum_{i=1}^r {v}_i\mathbf{v}_i + \widehat{\mathbf{v}}$
    where $\widehat{\mathbf{u}}$, $\widehat{\mathbf{v}}$ are vectors in the orthogonal complement of $\{\mathbf{v}_i\}_{i=1}^r$. Multiplying \eqref{eq:SpikedvsSpikeRes} on the left and right by $\mathbf{v}_i$ yields
    \begin{align*}
        \mathbf{v}_i^*G_1(z)\mathbf{v}_i = \frac{\sigma_i}{\tilde{\sigma}_i} \paren{\mathbf{v}_i^*\widetilde{G}_1(z)\mathbf{v}_i-z\mathbf{v}_i^*\widetilde{G}_1(z)V_r\paren{D_r^{-1}+I+zV_r^*\widetilde{G}_1(z)V_r}^{-1}V_r^*\widetilde{G}_1(z)\mathbf{v}_i}.
    \end{align*}
    A key calculation is
    \begin{align*}
        \vec v_i^* \widetilde{G}_1(z) \vec v_j 
        = - \frac 1 z (1 + m_{\mu_\dMP}(z) \sigma_i)^{-1} \vec v_i^* \vec v_j + \OO_\prec(\Psi(z)).
    \end{align*}
    We then consider
    \begin{align*}
        \mathbf{v}_i^*\widetilde{G}_1(z)V_r = \begin{bmatrix} \vec v_i^* \widetilde{G}_1(z) \vec v_1 & \cdots & \vec v_i^* \widetilde{G}_1(z) \vec v_r \end{bmatrix} = - \frac{1}{z} (I + m_{\mu_\dMP}(z) \sigma_i)^{-1} \vec e_i^* + \OO_\prec( \Psi(z)),
    \end{align*}
    using Lemma~\ref{lem:NonSpikedLocLaw}.  We compute, again using Lemma~\ref{lem:NonSpikedLocLaw},
    \begin{align*}
        z V_r^* \widetilde{G}_1(z) V_r = - (I + m_{\mu_\dMP}(z) \mathrm{diag}(\sigma_1,\ldots,\sigma_r))^{-1} + \OO_\prec(\Psi(z)).
    \end{align*}
    From \eqref{eq:west}, we have
    \begin{align*}
        \left\|\left(D_r^{-1}+I+zV_r^*\widetilde{G}_1(z)V_r\right)^{-1}\right\| = \OO_{\prec} (1).
    \end{align*}

 We find that for $z\in \widetilde{\mathcal{D}}$ and $1\leq i\leq r$,
    \begin{align*}
        \mathbf{v}_i^*G_1(z)\mathbf{v}_i = \frac{1}{1+d_i}\paren{\mathbf{v}_i^*\widetilde{G}_1(z)\mathbf{v}_i-\mathcal{L}_i} + \OO_\prec(\Psi(z)).
    \end{align*}
    On the other hand, since $\widehat{\mathbf{u}}^*V_r = 0$ and $\widehat{\mathbf{v}}^*V_r = 0$, it follows from Lemma \ref{lem:NonSpikedLocLaw} that for $z \in \widetilde{\mathcal{D}}$
    \begin{align*}
        \widehat{\mathbf{u}}^*G_1(z) \widehat{\mathbf{v}} = \widehat{\mathbf{u}}^*\widetilde{G}_1(z)\widehat{\mathbf{v}} + \OO_{\prec}(\Psi(z)),
    \end{align*}
    where we used $\tilde{\sigma}_i=\sigma_i$ for $i>r$. Moreover, we have that for $1\leq i,j \leq r$, $i\neq j$
    \begin{align*}
        \widehat{\mathbf{u}}^*G_1(z)\mathbf{v}_i = \OO_{\prec} (\Psi(z)), \quad \mathbf{v}_i^*G_1(z)\widehat{\mathbf{v}} = \OO_{\prec} (\Psi(z)).
    \end{align*}
    This establishes $\eqref{eq:SpikedLawG1}$ by observing that $\widehat{\mathbf{u}} = \sum_{i=r+1}^N {u}_i \mathbf{v}_i$ and $\widehat{\mathbf{v}} = \sum_{i=r+1}^{N} {v}_i \mathbf{v}_i$.

    Now, let $\Delta(z) = \widetilde{G}(z)-\widetilde{\Pi}(z)$ and, by Lemma \ref{lem:SpikedvsNoSpikeCompRes}, we have that
    \begin{align*}
        \mathbf{u}^* G_2(z) \mathbf{v} &= \mathbf{u}^* \widetilde{G}_2(z) \mathbf{v} 
        + z \widetilde{\mathbf{u}}^* \widetilde{\Pi}(z) \widehat{V}_r 
        \left( D_r^{-1} + I + z \widehat{V}_r^* \widetilde{G}(z) \widehat{V}_r \right)^{-1} 
        \widehat{V}_r^* \widetilde{G}(z) \widetilde{\mathbf{v}} \\
        &\quad - z \widetilde{\mathbf{u}}^* \Delta(z) \widehat{V}_r 
        \left( D_r^{-1} + I + z \widehat{V}_r^* \widetilde{G}(z) \widehat{V}_r \right)^{-1} 
        \widehat{V}_r^* \widetilde{G}(z) \widetilde{\mathbf{v}}.
    \end{align*}
    Using the structure of \eqref{eq:DetEquiv}, \eqref{eq:uEmb} and \eqref{eq:VEmb}, we have that
    \begin{align*}
        z \widetilde{\mathbf{u}}^* \widetilde{\Pi}(z) \widehat{V}_r 
        \left( D_r^{-1} + I + z \widehat{V}_r^* \widetilde{G}(z) \widehat{V}_r \right)^{-1} 
        \widehat{V}_r^* \widetilde{G}(z) \widetilde{\mathbf{v}} = 0.
    \end{align*}
    From Lemma \ref{lem:NonSpikedLocLaw}, we find that
    \begin{align*}
        \norm{\widetilde{\mathbf{u}}^*\Delta(z)\widehat{V}_r}= \OO_{\prec} (\Psi(z)), \quad \text{and} \quad \norm{\widehat{V}_r^* \widetilde{G}(z) \widetilde{\mathbf{v}}} =\OO_{\prec} (1),
    \end{align*}
    and
    \begin{align*}
        \left( D_r^{-1} + I + z \widehat{V}_r^* \widetilde{G}(z) \widehat{V}_r \right)^{-1} = \left(D_r^{-1}+I+zV_r^*\widetilde{G}_1(z)V_r\right)^{-1}.
    \end{align*}
    This completes the proof for \eqref{eq:SpikedLawG2}.
\end{proof}

\begin{proof}[\bf Proof of Lemma \ref{lem:VESDLimDet}]
    Using the Stieltjes inversion formula \eqref{eq:StieltjesInv}, it follows from \eqref{eq:LimCharact} that the density of $\mu_{0,\mathbf{b}}$ is given by
\begin{align}
    \varrho_{0,\mathbf{b}}(\lambda) = \frac{\varrho_{\dMP}(\lambda)}{\lambda} \mathbf{b}^* \Sigma_0 \braces{I+2\mathrm{Re}~m_{{\dMP}}(\lambda+\ri0^+)\Sigma_0 + |m_{{\dMP}}(\lambda+\ri0^+)|^2\Sigma_0^2}^{-1}\mathbf{b},
\end{align}
where $\mathrm{Im}~m_{\dMP}(x+\ri0^+) = \lim_{\epsilon\downarrow 0^{+}} \mathrm{Im}~m_{\dMP}(x+\ri\epsilon)$. Under Assumption \ref{as:TechA}, it 
follows that $\mu_{0,\mathbf{b}}$ satisfies Assumption \ref{as:measurecond} with the same support as $\mu_\dMP$. On the other hand, by observing that $d_{i}^{-1} + 1 - \left(1 + m_{\dMP}\big(f(-\tilde{\sigma}_i^{-1})\big)\sigma_i\right)^{-1} = 0$ and applying \eqref{eq:LimCharact}, and Lemma \ref{lem:EigValVecConv}, it follows that \(\mu_{\mathbf{b}}\) satisfies Assumption \ref{as:measurecond} also with the same support as $\mu_{\dMP}$ and, for each $1 \leq i \leq r$, we have
\begin{align*}
    c_i = \gamma_i = f(-\tilde{\sigma}_i^{-1}).
\end{align*}
For more details, see \cite{Ding2021c}.
\end{proof}

\section{Pole computation}\label{a:poles}

We present an approach for computing the poles of the Stieltjes transform associated with perturbations of Toeplitz Jacobi operators through connection coefficients, as described in \cite{Webb2021}. An alternative method to approximate these poles, by truncating the semi-infinite matrix, is also presented here. The first method, that of Olver \& Webb, gives guaranteed accuracy in exact arithmetic. The second offers weaker guarantees but offers a simpler implementation. Other techniques, such as those in \cite{bini_computing_2023}, may also be useful in this context.

\subsection{Finite rank perturbation of Jacobi operators and connection coefficients}

Consider a semi-infinite matrix $\mathcal{J}$ of the form
\begin{align}\label{eq:RefToeJac}
    \mathcal{J} = \begin{bmatrix}
        \alpha & \beta \\
        \beta & \alpha & \beta \\ 
        & \beta & \alpha & \ddots \\
        & & \ddots & \ddots
    \end{bmatrix},\quad \beta >0,
\end{align}
which is referred to as a Toeplitz Jacobi operator. The spectral properties of this operator are well understood, and its spectral measure is explicitly given by
\begin{align}\label{eq:Refmeas}
    \mu(\mathrm{d}\lambda) = \frac{1}{2\pi\beta^2} \sqrt{\gamma_{+}-x}\sqrt{x-\gamma_{-}}, \quad \text{where} \quad \gamma_{\pm} = \alpha \pm 2\beta.
\end{align}
Let $\widetilde{\mathcal{J}}$ be a finite-rank perturbation of $\mathcal{J}$ defined as
\begin{align}\label{eq:Jpertdef1}
    \widetilde{\mathcal{J}} = \begin{bmatrix}
        \tilde\alpha_0 & \tilde \beta_0 \\
        \tilde \beta_0 & \tilde \alpha_0 & \tilde\beta_1 \\ 
        & \tilde \beta_1 & \tilde\alpha_1 & \ddots \\
        & & \ddots & \ddots
    \end{bmatrix}, \quad \tilde\beta_j > 0,
\end{align}
with
\begin{align}\label{eq:Jpertdef2}
    \tilde\alpha_j=\alpha,~\tilde\beta_j=\beta~\text{for all }j\geq n.
\end{align}
Such matrices are referred to as Toeplitz-plus-finite-rank Jacobi operators.
Define $C = C_{\widetilde{\mathcal{J}} \to \mathcal{J}} = (c_{i,j})_{i,j=0}^{\infty}$ as the upper triangular matrix representing the change of basis between the orthonormal polynomials $(P_k)_{k=0}^{\infty}$ associated with $\widetilde{\mathcal{J}}$ and the orthonormal polynomials $(Q_k)_{k=0}^{\infty}$ corresponding to $\mathcal{J}$. This matrix, known as the connection coefficient matrix, satisfies
\begin{align}\label{eq:ConnCoefCond}
    P_k(\lambda) = c_{0,k}Q_0(\lambda) + c_{1,k}Q_1(\lambda) + \dots + c_{k,k}Q_k(\lambda), \quad c_{i,j} = \abrac{P_j,Q_i}_{\mu},
\end{align}
where $\abrac{\cdot,\cdot}_{\mu}$ is the inner product for $L^2(\mu)$.
The significance of this matrix lies in its ability to connect the spectral properties of $\widetilde{\mathcal{J}}$ with those of $\mathcal{J}$, allowing the spectrum of $\widetilde{\mathcal{J}}$ and its spectral measure to be fully determined. It is worth noting that the entries of $C$ follow a well-defined recurrence relation. For further details, see \cite{sack_algorithm_1971}, \cite[Lemma 3.2]{Webb2021} and \cite{wheeler_modified_1974}.
\begin{lemma}[\!\!\cite{Webb2021}, Lemma 3.2]
    The entries $c_{i,j}$ of the connection coefficient matrix $C=C_{\widetilde{\mathcal{J}}\to\mathcal{J}}$ satisfy a 5-term recurrence relation
    \begin{align}\label{eq:5Rec}
        -\beta c_{i-1,j}+\tilde{\beta}_{j-1}c_{i,j-1}+(\tilde{\alpha}_j-\alpha)c_{i,j} + \tilde{\beta}_{j}c_{i,j+1}-\beta c_{i+1,j}=0, \quad \text{for all }0\leq i\leq j,
    \end{align}
    where
    \begin{align*}
        c_{i,j} = \begin{cases}
            1 & \text{if }i=j=0,\\
            0 & \text{if }j=0\text{ and }i\neq 0,\\
            0 & \text{if }j=-1\text{ or }i=-1.
        \end{cases}
    \end{align*}
\end{lemma}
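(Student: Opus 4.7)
The plan is to derive the five-term recurrence by playing the two three-term recurrences (for the orthonormal polynomials of $\widetilde{\mathcal{J}}$ and those of $\mathcal{J}$) against each other via the self-adjointness of multiplication by $\lambda$ in $L^2(\mu)$.

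First, I would set up notation. Let $(P_j)_{j\geq 0}$ be the orthonormal polynomials for $\widetilde{\mathcal{J}}$ (their recurrence coefficients are the entries of $\widetilde{\mathcal{J}}$) and $(Q_i)_{i\geq 0}$ the orthonormal polynomials for $\mathcal{J}$, so by \eqref{eq:ScalarOrthonRec}
\begin{align*}
\lambda P_j(\lambda) &= \tilde\beta_j P_{j+1}(\lambda) + \tilde\alpha_j P_j(\lambda) + \tilde\beta_{j-1} P_{j-1}(\lambda),\\
\lambda Q_i(\lambda) &= \beta Q_{i+1}(\lambda) + \alpha Q_i(\lambda) + \beta Q_{i-1}(\lambda),
\end{align*}
with the conventions $P_{-1}\equiv Q_{-1}\equiv 0$. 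Since $\mu$ in \eqref{eq:Refmeas} is the spectral measure of $\mathcal{J}$, the family $(Q_i)$ is an orthonormal basis of $L^2(\mu)$, so the expansion in \eqref{eq:ConnCoefCond} identifies $c_{i,j}=\langle P_j,Q_i\rangle_\mu$.

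The core step is to compute $\langle \lambda P_j,Q_i\rangle_\mu$ in two ways. On one hand, substituting the first three-term recurrence gives
\begin{align*}
\langle \lambda P_j,Q_i\rangle_\mu = \tilde\beta_j c_{i,j+1}+\tilde\alpha_j c_{i,j}+\tilde\beta_{j-1} c_{i,j-1}.
\end{align*}
On the other, since multiplication by $\lambda$ is self-adjoint on $L^2(\mu)$, we can move $\lambda$ onto $Q_i$ and apply the second recurrence,
\begin{align*}
\langle \lambda P_j,Q_i\rangle_\mu = \langle P_j,\lambda Q_i\rangle_\mu = \beta c_{i+1,j}+\alpha c_{i,j}+\beta c_{i-1,j}.
\end{align*}
Equating the two expressions and rearranging yields precisely the stated five-term recurrence \eqref{eq:5Rec}.

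Finally, I would verify the boundary/initial data. Because both $P_0$ and $Q_0$ are constants with positive leading coefficient (equal to $(\mu(\mathbb{R}))^{-1/2}$ in each case), and since $\mu$ has unit mass, one gets $c_{0,0}=\langle P_0,Q_0\rangle_\mu=1$; the vanishing $c_{i,0}=0$ for $i\ne 0$ follows from $P_0\perp Q_i$ for $i\geq 1$; and the conventions $c_{-1,j}=c_{i,-1}=0$ are simply $P_{-1}\equiv Q_{-1}\equiv 0$. There is no real obstacle here beyond keeping the indexing and boundary terms consistent — the whole argument is essentially a symmetry computation, and the mild subtlety is only to notice that the inner product in $L^2(\mu)$ (with $\mu$ the spectral measure of $\mathcal{J}$, not $\widetilde{\mathcal{J}}$) is what makes the $Q_i$-recurrence move cleanly across the inner product while the $P_j$-recurrence contributes on its own side.
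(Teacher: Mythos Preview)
Your proof is correct and is precisely the standard argument: equate the two evaluations of $\langle \lambda P_j,Q_i\rangle_\mu$ coming from the three-term recurrences of $P_j$ and $Q_i$, using that multiplication by $\lambda$ is self-adjoint on $L^2(\mu)$. The paper does not supply its own proof of this lemma---it simply cites \cite[Lemma~3.2]{Webb2021}, where exactly this computation is carried out---so there is nothing further to compare.
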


Due to the structured form of $\widetilde{\mathcal{J}}$ in \eqref{eq:Jpertdef1}, \eqref{eq:Jpertdef2}, the connection coefficients matrix also inherits a specific structure. In particular, it decomposes as \cite{Webb2021},
\begin{align}\label{eq:CDecomp}
    C = C_{\mathrm{Toe}} + C_{\mathrm{fin}}
\end{align}
where $C_{\mathrm{Toe}}$ is a Toeplitz matrix with bandwidth $2n-1$, and $ C_{\mathrm{fin}}$ has nonzero entries only within the $(n-1) \times (n-2)$ principal submatrix. The next theorem provides a precise description and demonstrates that the entries of $C$ can be computed in $\OO(n^2)$ operations.
\begin{theorem}[\!\!\cite{Webb2021}, Theorem 4.8]
    Consider a Toeplitz-plus-finite-rank Jacobi operator $\widetilde{\mathcal{J}}$  \eqref{eq:Jpertdef1}, \eqref{eq:Jpertdef2} and a Toeplitz Jacobi operator $\mathcal{J}$ \eqref{eq:RefToeJac}. The connection coefficients matrix $C=C_{\widetilde{\mathcal{J}}\to\mathcal{J}}=(c_{i,j})_{i,j=0}^{\infty}$ satisfies
    \begin{align}\label{eq:Cstruc}
        c_{i,j} = c_{i-1,j-1} \text{ for all } i,j>0 \text{ such that }i+j\geq 2n,\quad \text{and} \quad 
        c_{0,j} = 0 \text{ for all }j\geq 2n.
    \end{align}
\end{theorem}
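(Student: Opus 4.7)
The plan is to exploit the observation that for $j \geq n+1$, all three relevant coefficients $\tilde{\alpha}_j$, $\tilde{\beta}_j$, $\tilde{\beta}_{j-1}$ in the 5-term recurrence \eqref{eq:5Rec} take their Toeplitz values, so that the recurrence degenerates to the constant-coefficient form
\begin{align*}
    c_{i,j+1} = c_{i-1,j} + c_{i+1,j} - c_{i,j-1}, \qquad j \geq n+1.
\end{align*}
Equivalently, $P_j$ and $Q_j$ obey the same three-term recurrence for $j \geq n+1$, so the propagation of the coefficients in the $Q$-basis decouples from the perturbation once we are past the $n$th row.

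First I would derive a closed-form expression
\begin{align*}
    P_{n+k}(\lambda) = Q_{k-1}(\lambda)\, P_{n+1}(\lambda) - Q_{k-2}(\lambda)\, P_{n}(\lambda), \qquad k \geq 1,
\end{align*}
by induction on $k$, with $Q_{-1} \equiv 0$, using the three-term recurrence for $P$ at $j=n+1$ together with $\tilde\alpha_{n+1}=\alpha$ and $\tilde\beta_n = \tilde\beta_{n+1} = \beta$. Taking the $\mu$-inner product with $Q_i$ and applying the Chebyshev product-to-sum identity
\begin{align*}
    Q_m Q_\ell = \sum_{r=0}^{\min(m,\ell)} Q_{|m-\ell|+2r}
\end{align*}
(which holds because $Q_j$ is, up to the affine change $y = (\lambda-\alpha)/(2\beta)$, the orthonormal Chebyshev polynomial of the second kind) yields
\begin{align*}
    c_{i,n+k} = \sum_{r=0}^{\min(i,k-1)} c_{|i-(k-1)|+2r,\,n+1} - \sum_{r=0}^{\min(i,k-2)} c_{|i-(k-2)|+2r,\,n}.
\end{align*}
Since $c_{s,n+1} = 0$ for $s > n+1$ and $c_{s,n} = 0$ for $s > n$, in the regime $i + (n+k) \geq 2n$ the sum collapses to a handful of terms, which can then be matched pairwise to yield $c_{i,n+k} = c_{i-1,n+k-1}$.

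The second claim $c_{0,j} = 0$ follows by specializing to $i=0$, giving $c_{0,n+k} = c_{k-1,n+1} - c_{k-2,n}$; for $k$ sufficiently large both terms vanish by the degree bounds, and the remaining boundary cases are handled via the explicit ratios of leading coefficients $c_{s,s} = \beta^{s}/\prod_{j=0}^{s-1}\tilde\beta_j$ together with $\tilde\beta_n = \beta$, which force $c_{n+1,n+1} = c_{n,n}$. The hard part is the bookkeeping at the boundary of the region $i+j \geq 2n$: although propagation away from this boundary is trivial via the constant-coefficient recurrence, the recurrence at $j = n+1$ contains an extra term $-(\tilde\beta_{n-1}/\beta)\,c_{i,n-1}$ coming from the perturbed entry $\tilde\beta_{n-1}$, which must be shown to drop out of each desired identity. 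One clean way to organize this is to introduce $h_{i,j} := c_{i,j} - c_{i-1,j-1}$ and verify that $h$ satisfies the same homogeneous recurrence for $j \geq n+2$; it then suffices to check that $h_{i,n+1}$ and $h_{i,n+2}$ vanish for $i$ above the relevant threshold, which is a direct calculation from the degree bounds on $P_n$ and $P_{n+1}$.
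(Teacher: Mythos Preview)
The paper does not supply a proof of this statement; it is quoted verbatim from \cite{Webb2021}, Theorem~4.8, as background for the pole-computation appendix. There is therefore no argument in the paper to compare your proposal against.

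On its own merits, your strategy is sound. The reduction of \eqref{eq:5Rec} to the constant-coefficient recurrence $c_{i,j+1}=c_{i-1,j}+c_{i+1,j}-c_{i,j-1}$ for $j\geq n+1$ is correct, the closed form $P_{n+k}=Q_{k-1}P_{n+1}-Q_{k-2}P_n$ follows by the induction you outline, and the Chebyshev-$U$ product rule together with upper-triangularity does collapse $h_{i,n+k}:=c_{i,n+k}-c_{i-1,n+k-1}$ to the single quantity $c_{i+k-1,n+1}-c_{i+k-2,n}$. One genuine point of friction, however, surfaces exactly at the boundary you flag as ``the hard part'': the case $(i,j)=(n,n)$ lies in the region $i+j\geq 2n$, so \eqref{eq:Cstruc} demands $c_{n,n}=c_{n-1,n-1}$, yet your own leading-coefficient computation gives $c_{n,n}/c_{n-1,n-1}=\beta/\tilde\beta_{n-1}$, which need not equal $1$ under \eqref{eq:Jpertdef2} as stated. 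This is not a flaw in your reasoning but an apparent off-by-one discrepancy between the indexing in \eqref{eq:Jpertdef2} and the threshold in \eqref{eq:Cstruc}; you should consult the original convention in \cite{Webb2021} before finalizing the boundary bookkeeping.
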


The connection coefficient matrix $C_{\widetilde{\mathcal{J}} \to \mathcal{J}}$ provides a way to find the poles of the Stieltjes transform associated with $\widetilde{\mathcal{J}}$ and to express its spectral measure in terms of that of $\mathcal{J}$. For a more detailed discussion, refer to \cite[Theorems 4.12, 4.14 and Remark 4.15]{Webb2021}.
\begin{theorem}
    Consider a Toeplitz-plus-finite-rank Jacobi operator  $\widetilde{\mathcal{J}}$ as defined in \eqref{eq:Jpertdef1}, \eqref{eq:Jpertdef2} and the Stieltjes transform  $s(z) = \vec e_1^*(\widetilde{\mathcal{J}} - z)^{-1} \vec e_1$. Let $\mathcal{J}$ be a Toeplitz Jacobi operator\eqref{eq:RefToeJac} and recall $\gamma_{\pm}$ in \eqref{eq:Refmeas}. Let $C$ denote the connection coefficient matrix \eqref{eq:ConnCoefCond} and consider the polynomial $p_C(\lambda)$ given by
    \begin{align*}
        p_C(\lambda) = \sum_{k=0}^{2n-1}c_{0,k}P_k(\lambda) = \sum_{k=0}^{2n-1}\abrac{C^T\vec e_k,C^T\vec e_0}Q_k(\lambda).
    \end{align*}
    The poles $\lambda_1,\dots,\lambda_r$ of $s(z)$, where $r\leq n$, are the roots of $p_C$ in $\R\setminus\{\gamma_{\pm}\}$ such that 
    \begin{align*}
        w_i = \lim_{\epsilon\downarrow 0} \frac{\epsilon}{i}s(\lambda_i+i\epsilon)\neq 0.
    \end{align*}
    Moreover, the spectral measure of $\widetilde{\mathcal{J}}$ is given by
    \begin{align*}
        \widetilde{\mu}(\mathrm{d}\lambda) = \frac{1}{p_C(\lambda)}\mu(\mathrm{d}\lambda) + \sum_{i=1}^r w_i\delta_{\lambda_i}(\lambda).
    \end{align*}
\end{theorem}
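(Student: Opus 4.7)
The plan is to transport the spectral analysis of $\widetilde{\mathcal J}$ to that of the explicit Toeplitz operator $\mathcal J$ through the connection coefficient identity $P_j = \sum_{i \leq j} c_{i,j} Q_i$. First I would note that $\widetilde{\mathcal J} - \mathcal J$ has finite rank supported in the top-left $n \times n$ block, so Weyl's theorem on essential spectra yields $\sigma_{\mathrm{ess}}(\widetilde{\mathcal J}) = \sigma_{\mathrm{ess}}(\mathcal J) = [\gamma_-, \gamma_+]$. Consequently the spectral measure $\widetilde \mu$ (a probability measure since $\|\vec e_1\| = 1$) decomposes as an absolutely continuous piece supported in $[\gamma_-, \gamma_+]$ plus finitely many atoms $\{\lambda_i\}_{i=1}^r \subset \mathbb R \setminus [\gamma_-, \gamma_+]$ with $r \leq n$.

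The core of the argument is a moment identity. Because $\mu$ is also a probability measure, $Q_0 \equiv 1$, which gives $c_{0,k} = \langle P_k, Q_0 \rangle_\mu = \int P_k \, \mu(\mathrm d\lambda)$. Using orthonormality of $(P_j)$ in $L^2(\widetilde \mu)$ together with the bandedness $c_{0,k} = 0$ for $k \geq 2n$ from \eqref{eq:Cstruc},
\begin{align*}
\int p_C(\lambda) P_j(\lambda) \, \widetilde \mu(\mathrm d\lambda) = \sum_{k=0}^{2n-1} c_{0,k} \delta_{jk} = c_{0,j} = \int P_j(\lambda) \, \mu(\mathrm d\lambda), \qquad j \geq 0.
\end{align*}
Since $(P_j)$ is a polynomial basis and both measures are compactly supported, this pins down $p_C \widetilde \mu = \mu$ as a measure identity. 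Because $\mu$ is purely absolutely continuous, $p_C$ must vanish at each atom $\lambda_i$ of $\widetilde \mu$; hence $p_C \widetilde \mu_{\mathrm{ac}} = \mu$, and on $(\gamma_-, \gamma_+)$, where $\mu$ has positive density, the positivity of $\widetilde \mu_{\mathrm{ac}}$ forces $p_C > 0$. This yields $\widetilde \mu_{\mathrm{ac}}(\mathrm d\lambda) = \mu(\mathrm d\lambda)/p_C(\lambda)$.

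To identify the poles, I would then invoke the spectral representation $s(z) = \int (\lambda - z)^{-1} \widetilde \mu(\mathrm d\lambda)$: the function $s$ has a simple pole at each atom $\lambda_i$ with residue $w_i = |\vec e_1^* \mathbf u_i|^2$, where $\mathbf u_i$ is the unit eigenvector of $\widetilde{\mathcal J}$ at $\lambda_i$. Conversely, a real root of $p_C$ outside $[\gamma_-, \gamma_+]$ need not be an atom of $\widetilde \mu$; such a spurious zero is exactly one with $w_i = 0$. Restricting to roots with $w_i \neq 0$ gives the claimed bijection between poles of $s$ and roots of $p_C$ in $\mathbb R \setminus \{\gamma_\pm\}$.

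The hard part will be rigorously accounting for all $2n - 1$ complex roots of $p_C$: the at most $n$ real roots outside $[\gamma_-, \gamma_+]$ (candidate eigenvalues), possible roots at the endpoints $\gamma_\pm$ (tolerated since the density of $\mu$ vanishes there, which is why they are excluded in the statement), and the remaining complex conjugate pairs that are invisible to $s$. The bandedness property \eqref{eq:Cstruc} of $C$ combined with the five-term recurrence \eqref{eq:5Rec} provides the combinatorial backbone for this count, and interlacing-type arguments for perturbed orthogonal polynomial systems, as developed in \cite{Webb2021}, complete the identification.
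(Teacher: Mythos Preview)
The paper does not supply its own proof of this theorem; it is quoted directly from \cite{Webb2021} (Theorems 4.12, 4.14 and Remark 4.15 there), so there is nothing in the present paper to compare against. Your sketch is therefore an independent argument.

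That said, your core idea is correct and is essentially the mechanism behind the cited result. The moment identity
\[
\int p_C(\lambda)\, P_j(\lambda)\, \widetilde\mu(\mathrm d\lambda)
\;=\; c_{0,j}
\;=\; \int P_j(\lambda)\, \mu(\mathrm d\lambda), \qquad j \geq 0,
\]
is valid (orthonormality of $(P_j)$ in $L^2(\widetilde\mu)$ on the left, $Q_0\equiv 1$ on the right, and $c_{0,j}=0$ for $j\geq 2n$ from \eqref{eq:Cstruc}). Since both sides are compactly supported, equality of all polynomial moments forces $p_C\,\mathrm d\widetilde\mu = \mathrm d\mu$ as measures. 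The Lebesgue decomposition argument you then give is right: the singular part of $\widetilde\mu$ must sit on the finite zero set of $p_C$, so there is no singular continuous piece; the absolutely continuous part is $\mu/p_C$; and non-negativity plus local integrability of the resulting density force $p_C>0$ on $(\gamma_-,\gamma_+)$. This already delivers the measure formula and the inclusion of the atoms in the zero set of $p_C$.

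Two small points deserve care. First, the perturbation $\widetilde{\mathcal J}-\mathcal J$ is supported in the top-left $(n{+}1)\times(n{+}1)$ block, not $n\times n$: the off-diagonal entry $\tilde\beta_{n-1}$ touches row and column $n$. A bare rank bound via interlacing thus yields only $r\leq n{+}1$; the sharper $r\leq n$ uses the specific Jacobi structure and is established in \cite{Webb2021}. Second, as you correctly flag, the remaining work --- showing which real roots of $p_C$ outside $[\gamma_-,\gamma_+]$ actually carry mass and accounting for the complex roots --- is where the reference does genuine additional analysis via the Toeplitz symbol and the Joukowski map. Your outline is an honest sketch of the route taken there, not a gap.
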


Although there is a clear connection between the poles of $s(z)$ and the roots of $p_C(\lambda)$, the condition $w_i \neq 0$ introduces challenges in both computation and analysis. However, the Joukowski map 
\begin{align}
    J(z) = \frac{1}{2}(z+z^{-1}),
\end{align}
can be used to overcome these issues. The Joukowski map is a conformal map from $\mathbb{D}=\{z\in \C : |z|<1\}$ to $\C\setminus [-1,1]$ that sends the unit circle to two copies of the interval $[-1,1]$.
\begin{theorem}[\!\!\cite{Webb2021}, Theorems 4.21 and 4.22]
    Let $\widetilde{\mathcal{J}}$ be a finite rank perturbation of the Toeplitz Jacobi operator $\mathcal{J}$ \eqref{eq:Jpertdef1}, \eqref{eq:Jpertdef2} and let $C = C_{\mathrm{Toe}}+C_{\mathrm{fin}}$ be a decomposition of $C_{\widetilde{\mathcal{J}}\to\mathcal{J}}$ \eqref{eq:CDecomp}. Consider $s(z)= \vec e_1^*(\widetilde{\mathcal{J}}-z)^{-1} \vec e_1$ and define the Toeplitz symbol of $C_{Toe}=(t_{i,j})_{i,j=0}^{\infty}$ as 
    \begin{align}
        c(z) = \sum_{i=0}^{2n-1}t_{0,i}z^i.
    \end{align}
    The poles $\lambda_1,\dots,\lambda_r$ of $s(z)$ are given by
    \begin{align*}
        \lambda_i = M_{[\gamma_{-},\gamma_{+}]}\circ J(z_i)
    \end{align*}
    where $\{z_i\}_{i=1}^r$ are the roots of $c$ that lie in $\mathbb{D}$, which are all real and simple, and $M_{[\gamma_{-},\gamma_{+}]}$ is the affine map from $[-1,1]$ to $[\gamma_{-},\gamma_{+}]$ with $\gamma_{\pm}$ defined in \eqref{eq:Refmeas}. Moreover, the spectral measure of $\widetilde{\mathcal{J}}$ is
    \begin{align*}
        \widetilde{\mu}(\mathrm{d}\lambda) = \frac{1}{p_C(\lambda)}\mu(\mathrm{d}\lambda) + \frac{\gamma_{+}-\gamma_{-}}{2}\sum_{i=1}^r \dfrac{(z_i-z_i^{-1})^2}{z_ic^{\prime}(z_i)c(z_i^{-1})}\delta_{\lambda_i}(\lambda).
    \end{align*}
\end{theorem}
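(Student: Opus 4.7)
The plan is to exploit the Joukowski conformal change of variables to turn the reference orthonormal polynomials $Q_k$ of the Toeplitz operator $\mathcal{J}$ into Laurent monomials, at which point the polynomial $p_C(\lambda) = \sum_{k=0}^{2n-1} c_{0,k} Q_k(\lambda)$ from the preceding theorem factors through the Toeplitz symbol $c(z)$. Combined with the fact, recalled from \eqref{eq:Cstruc}, that $c_{0,j}=0$ for $j \geq 2n$ so that only the first row of the banded Toeplitz part $C_{\mathrm{Toe}}$ contributes to the infinite sum defining $p_C$, this will yield both the pole locations and the explicit weight formula.

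First I would precompose with the affine map $M_{[\gamma_-,\gamma_+]}$ so that $\mathcal{J}$ is normalized to the free Jacobi operator with $\alpha=0$, $\beta=1/2$, whose orthonormal polynomials are the Chebyshev polynomials of the second kind satisfying $U_k(J(z))(z-z^{-1}) = z^{k+1} - z^{-(k+1)}$. Substituting $\lambda = J(z)$ then gives
\begin{equation*}
   (z - z^{-1})\, p_C(\lambda) \;=\; \sum_{k=0}^{2n-1} c_{0,k} z^{k+1} \;-\; \sum_{k=0}^{2n-1} c_{0,k} z^{-(k+1)}.
\end{equation*}
Because the first row of $C_{\mathrm{fin}}$ is supported on the first $n-2$ columns while $C_{\mathrm{Toe}}$ has first row exactly $(t_{0,0},\dots,t_{0,2n-1})$, the key algebraic identity to establish is that the first sum equals $z\,c(z)$ up to a lower-order contribution from $C_{\mathrm{fin}}$ that, after symmetrization with the $z^{-1}$-sum via the involution $z \mapsto z^{-1}$ (which fixes $\lambda = J(z)$), cancels, leaving the clean expression $(z-z^{-1}) p_C(\lambda) = z\,c(z) - z^{-1} c(z^{-1})$ up to the scaling introduced by $M_{[\gamma_-,\gamma_+]}$. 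Granting this identity, the roots of $p_C$ outside $[\gamma_-,\gamma_+]$ come in reciprocal pairs $\{z, z^{-1}\}$ under Joukowski, and since the preceding theorem locates the poles of $s(z)$ precisely among such roots, only representatives $z_i \in \mathbb{D}$ need be counted; these are the roots of $c$ in $\mathbb{D}$.

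To argue that the roots of $c$ in $\mathbb{D}$ are real and simple, I would appeal to the self-adjointness of $\widetilde{\mathcal{J}}$: each pole of $s$ is a real eigenvalue of $\widetilde{\mathcal{J}}$ with $\vec e_1$-cyclic, hence simple, spectral multiplicity. Any non-real root $z_i \in \mathbb{D}$ would produce a non-real $\lambda_i$, contradicting reality of the spectrum. A higher-multiplicity real root of $c$ in $\mathbb{D}$ would produce a multiple root of $p_C$ outside the bulk, forcing the associated residue (computable in Step 4) to vanish to leading order, which would be inconsistent with $w_i \neq 0$. The weight formula then follows by residue calculus in the Joukowski variable: using $d\lambda/dz = (1 - z^{-2})/2$ rescaled by $(\gamma_+-\gamma_-)/2$, and the factorization $(z-z^{-1})p_C(\lambda) = zc(z) - z^{-1}c(z^{-1})$, the residue of the Stieltjes transform $s(z) = \int \mu(d\lambda)/(p_C(\lambda)(\lambda-z)) + \sum w_i/(\lambda_i - z)$ at $\lambda_i$ evaluates (after cancellations coming from $c(z_i)=0$ and a chain-rule factor) to $\tfrac{1}{2}(\gamma_+-\gamma_-)(z_i-z_i^{-1})^2/(z_i c'(z_i) c(z_i^{-1}))$. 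The absolutely continuous piece $\mu/p_C$ is inherited directly from the preceding theorem.

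The main obstacle is the algebraic identity in the middle of the second paragraph, namely the precise pairing that relates the first row of the full connection matrix $C$ (not just its Toeplitz part) to the symbol $c(z)$ and its reflection $c(z^{-1})$, after the Joukowski symmetrization. This requires a careful analysis of how the finite-rank correction $C_{\mathrm{fin}}$ interacts with the reflection $z \leftrightarrow z^{-1}$ — essentially a Wiener--Hopf--type decomposition adapted to the Toeplitz-plus-finite-rank structure — and it is here that the specific banded structure of $C_{\mathrm{Toe}}$ (bandwidth $2n-1$) and the support constraint on $C_{\mathrm{fin}}$ (principal $(n-1)\times(n-2)$ block) have to be combined in a way that makes the non-Toeplitz contributions to $\sum c_{0,k} z^{k+1}$ and $\sum c_{0,k} z^{-(k+1)}$ cancel after multiplication by $z-z^{-1}$. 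A secondary complication is verifying that the residue calculation does not pick up a spurious contribution from the potential zeros of $p_C$ on the boundary $z \in \{\pm 1\}$ (i.e.\ at $\lambda = \gamma_\pm$), which must be ruled out either by regularity of $\widetilde \mu$ at the edges or by a removable-singularity argument.
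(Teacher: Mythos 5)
The paper does not prove this theorem itself---it imports it verbatim from Webb~\cite{Webb2021}, Theorems 4.21 and 4.22---so there is no in-house proof to compare against. On its own terms, however, your proposal has a load-bearing error at the very first step. The preceding theorem defines
\begin{align*}
p_C(\lambda) \;=\; \sum_{k=0}^{2n-1} c_{0,k}\, P_k(\lambda) \;=\; \sum_{k=0}^{2n-1} \bigl\langle C^T \vec e_k, C^T \vec e_0\bigr\rangle\, Q_k(\lambda),
\end{align*}
so the coefficients of $p_C$ in the reference basis $\{Q_k\}$ are the entries $(CC^T)_{0,k}$, not the first-row entries $c_{0,k}$ that you substitute. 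Since the Joukowski identity $Q_k(J(z))\,(z-z^{-1}) = z^{k+1}-z^{-(k+1)}$ applies to the $Q_k$, you must carry $(CC^T)_{0,k}$ through the substitution, which is a \emph{quadratic} expression in the rows of $C$. The object that emerges is therefore the symbol of $C_{\mathrm{Toe}}C_{\mathrm{Toe}}^T$, namely $c(z)\,c(z^{-1})$, not the linear combination $z\,c(z) - z^{-1}c(z^{-1})$ you aim for.

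This matters for two reasons. First, even if one grants the expansion $p_C = \sum c_{0,k}Q_k$, the cancellation you invoke does not occur: writing $f(z) = \sum_{k\le n-3}(C_{\mathrm{fin}})_{0,k}z^k$, your proposed identity would require $z f(z) = z^{-1} f(z^{-1})$, and comparing the $z$-degree ranges ($[1,\,n-2]$ versus $[-(n-2),\,-1]$) forces $f\equiv 0$, which does not hold in general. Second, and more fundamentally, the claimed identity $(z-z^{-1})p_C(J(z)) = z\,c(z)-z^{-1}c(z^{-1})$ would not locate the poles at the zeros of $c$ even if it were true: under that relation the roots of $p_C$ correspond to solutions of $z^2 c(z) = c(z^{-1})$, not to $c(z)=0$. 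For the pole count to read off the roots of $c$ in $\mathbb{D}$ one needs a \emph{multiplicative} relation of the schematic form $p_C(J(z)) \propto c(z)\,c(z^{-1})$, so that the reciprocity $z\leftrightarrow z^{-1}$ pairs $c(z)=0$ with $c(z^{-1})=0$ and one may canonically select the representative in $\mathbb{D}$. Your surrounding framework---normalize to the free Jacobi operator, Joukowski substitution, reciprocal root pairing, residue calculus for the weights, simplicity via the $\vec e_1$-cyclicity of $\widetilde{\mathcal J}$---is the right shape, but the central identity has to be rebuilt around $(CC^T)_{0,k}$ and the product $c(z)\,c(z^{-1})$, together with a careful accounting of the finite-rank correction to $C_{\mathrm{Toe}}C_{\mathrm{Toe}}^T$ coming from the semi-infinite truncation and from $C_{\mathrm{fin}}$.
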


It is important to note that the roots of the Toeplitz symbol $c(z)=\sum_{i=0}^{2n-1}t_{0,i}z^i$ can be efficiently computed using a companion matrix approach. Specifically, the roots coincide with the eigenvalues of the matrix
\begin{align}\label{eq:CompMat}
    \begin{bmatrix}
        & & & -\frac{t_{0,1}}{t_{0,2n-1}}\\
        1 & & & -\frac{t_{0,2}}{t_{0,2n-1}}\\
        & \ddots & & \vdots\\
        & & 1 & -\frac{t_{0,2n-2}}{t_{0,2n-1}}
    \end{bmatrix}.
\end{align}

The key ideas of this section are summarized in Algorithm~\ref{PE:ConnCoef}. As established in \cite[Theorem 6.8]{Webb2021}, this pole detection algorithm determines the exact number and location of the poles with arbitrary accuracy in a finite number of operations.

\renewcommand{\thealgorithm}{PE.\arabic{algorithm}}
\setcounter{algorithm}{0}
\begin{algorithm}
    \caption{Pole estimation via connection coefficients} \label{PE:ConnCoef}
    \begin{algorithmic}[1]
        \Statex \textbf{Input:} A Toeplitz-plus-finite-rank Jacobi operator $\widetilde{\mathcal{J}}$.
        \Statex \textbf{Output:} Poles $\lambda_1,\dots,\lambda_r$ of the Stieltjes transform associated with $\widetilde{\mathcal{J}}$.
        \State Find $\alpha,\beta$ such that 
        \begin{align*}
            \widetilde{\mathcal{J}}_{i,i}=\alpha,~ \widetilde{\mathcal{J}}_{i,i+1}=\beta, ~\text{for all } i\geq n,
        \end{align*}
        and construct $\mathcal{J}$ as in \eqref{eq:Refmeas}.
        \State Construct $C=C_{\widetilde{\mathcal{{J}}}\to\mathcal{J}}$ using \eqref{eq:5Rec}, \eqref{eq:Cstruc} and decompose it into $C = C_{\mathrm{Toe}} + C_{\mathrm{fin}}$.
        \State Build the symbol $c(z)=\sum_{i=0}^{2n-1}t_{0,i}z^i$ of $C_{\mathrm{Toe}}=(t_{i,j})$ and find its roots $\{z_i\}_{i=1}^r$ inside $\mathbb{D}$ using \eqref{eq:CompMat}. 
        \State Compute $\lambda_i = M_{[\gamma_{-},\gamma_{+}]}\circ J(z_i)$ where $i=1,\dots,r$ and $\gamma_{\pm}$ defined in \eqref{eq:Refmeas}. 
        \State \Return $\{\lambda_i\}_{i=1}^r$.
    \end{algorithmic}
\end{algorithm}

\subsection{Finite section of Jacobi operators}

Consider a Jacobi operator $\mathcal{J}$ of the form
\begin{align*}
    \mathcal{J} = \begin{bmatrix}
        \alpha_0 & \beta_0 \\
        \beta_0 & \alpha_0 & \beta_1 \\ 
        & \beta_1 & \alpha_1 & \ddots \\
        & & \ddots & \ddots
    \end{bmatrix}, \quad \beta_j\geq 0,
\end{align*}
where 
\begin{align}\label{eq:JTrunCond1}
    \alpha_j=\alpha,~\beta_j=\beta~\text{for all }j\geq n.
\end{align}
Suppose that the spectral measure associated with $\mathcal{J}$ is of the form 
\begin{align}\label{eq:JTrunCond2}
    \mu(\mathrm{d}\lambda) = h(\lambda)\sqrt{\lambda-\gamma_{-}}\sqrt{\gamma_{+}-\lambda}\, \mathds{1}_{[\gamma_-,\gamma_+]}(\lambda) + \sum_{i=1}^r w_i\delta_{\lambda_i},
\end{align}
where $\gamma_{\pm} = \alpha\pm 2\beta$ and $\lambda_1\geq \dots \geq \lambda_r > \gamma_{+}$, and further assume that it sastisfies Assumption \ref{as:measurecond}. The \textit{finite-section method} \cite{gohberg_convolution_2005,hagen_c*-algebras_2001} is a common approach to estimating the spectrum of a linear self-adjoint operator. It approximates the spectrum by truncating the associated semi-infinite matrix to an $N \times N$ principal submatrix and using its eigenvalues for sufficiently large $N$. Since the poles of the Stieltjes transform associated with $\mathcal{J}$ correspond to the discrete eigenvalues $\{\lambda_i\}_{i=1}^r$, the finite-section method, as described in Algorithm~\ref{PE:Trunc} can be applied to estimate the number and locations of these poles.

Although this method is straightforward, it often has a significant drawback, known as \textit{spectral pollution}. This occurs when the eigenvalues of the truncated operators (or a subsequence) converge to values that are not part of the true spectrum of the original operator. However, under the current assumptions, it can be shown that the finite-section method does not suffer from this issue and accurately captures the correct number of discrete eigenvalues when $N$ is sufficiently large.  This can be established, for example,  using the asymptotics of the orthogonal polynomials associated with $\mathcal{J}$, as outlined in \cite{Ding2021b}. The convergence of individual eigenvalues occurs at an exponential rate as $N$ increases.  The rate does potentially degenerate as gaps in the spectrum of $\mathcal J$ close.  A more detailed analysis of this observation is left for future work.

\begin{algorithm}[tbp]
    \caption{Pole estimation via finite section} \label{PE:Trunc}
    \begin{algorithmic}[1]
        \Statex \textbf{Input:} A Jacobi operator $\mathcal{J}$ satisfying \eqref{eq:JTrunCond1} and \eqref{eq:JTrunCond2} and a threshold $\gamma$.
        \Statex \textbf{Output:} Approximations of the poles $\lambda_1,\dots,\lambda_r$ of the Stieltjes transform associated with $\mathcal{J}$.
        \State Set $J_\ell = \mathcal{J}_{1:\ell,1:\ell}$ where $\ell$ is sufficiently large.
        \State \Return Return the top eigenvalues $\{\lambda_i\}_{i=1}^{\hat r}$ of $J_\ell$ that are larger than $\gamma$.
    \end{algorithmic}
\end{algorithm}

\bibliographystyle{abbrv}
\bibliography{library}

\end{document}